\newcommand{\Mdef}[2]{\newcommand{#1}{\relax \ifmmode #2 \else $#2$\fi}}
\newcommand{\cok}{\mathrm{cok}}
\newcommand{\im}{\mathrm{im}}
\newcommand{\sm }{\wedge}
\newcommand{\tensor}{\otimes}
\newcommand{\sdr}{\rtimes}
\newcommand{\Hom}{\mathrm{Hom}}
\newcommand{\Ext}{\mathrm{Ext}}
\Mdef{\bhom}{\mathbf{\hat{H}om}}
\Mdef{\Mod}{\mathrm{mod}}
\newcommand{\st}{\; | \;}
\newtheorem{thm}{Theorem}[section]
\newtheorem{lemma}[thm]{Lemma}
\newtheorem{prop}[thm]{Proposition}
\newtheorem{cor}[thm]{Corollary}
\theoremstyle{definition}
\newtheorem{defn}[thm]{Definition}
\newtheorem{warning}[thm]{Warning}
\newtheorem{example}[thm]{Example}
\newtheorem{remark}[thm]{Remark}
\newtheorem{conj}[thm]{Conjecture}
\newcommand{\qqed}{\qed \\[1ex]}
\renewenvironment{proof}[1][\hspace*{-.8ex}]{\noindent {\bf Proof #1:\;}}{\qqed}
\Mdef{\PH} {\Phi^H}
\Mdef{\PK} {\Phi^K}
\Mdef{\PL} {\Phi^L}
\Mdef{\PT} {\Phi^{\T}}
\Mdef{\ef}{E{\cF}_+}
\Mdef{\etf}{\widetilde{E}{\cF}}
\Mdef{\eg}{E{G}_+}
\Mdef{\etg}{\tilde{E}{G}}
\newcommand{\piA}{\pi^{\cA}}
\Mdef{\infl}{\mathrm{inf}}
\Mdef{\defl}{\mathrm{def}}
\Mdef{\res}{\mathrm{res}}
\Mdef{\ind}{\mathrm{ind}}
\Mdef{\coind}{\mathrm{coind}}
\Mdef{\univ}{\mathcal{U}}
\Mdef{\Fp}{\mathbb{F}_p}
\Mdef{\Zpinfty}{\Z /p^{\infty}}
\Mdef{\Zpadic}{\Z_p^{\wedge}}
\newcommand{\bi}{\begin{itemize}}
\newcommand{\be}{\begin{enumerate}}
\newcommand{\bc}{\begin{center}}
\newcommand{\bd}{\begin{description}}
\newcommand{\ei}{\end{itemize}}
\newcommand{\ee}{\end{enumerate}}
\newcommand{\ec}{\end{center}}
\newcommand{\ed}{\end{description}}
\newcommand{\dichotomy}[2]{\left\{ \begin{array}{ll}#1\\#2 \end{array}\right.}
\newcommand{\adjunction}[4]{
\diagram
#1:#2 \rrto<0.7ex> &&
#3  \llto<0.7ex> :#4 
\enddiagram}
\newcommand{\lra}{\longrightarrow}
\newcommand{\lla}{\longleftarrow}
\newcommand{\sets}{\mathbf{Sets}}
\newcommand{\crings}{\mathbf{Rings}}
\newcommand{\rings}{\mathbf{Rings}}
\Mdef{\we}{\mathbf{we}}
\Mdef{\fib}{\mathbf{fib}}
\Mdef{\cof}{\mathbf{cof}}
\Mdef{\BI}{\mathcal{BI}}
\newcommand{\cofibre}{\mathrm{cofibre}}
\newcommand{\ilim}{\mathop{ \mathop{\mathrm{lim}} \limits_\leftarrow} \nolimits}
\newcommand{\colim}{\mathop{  \mathop{\mathrm {lim}} \limits_\rightarrow} \nolimits}
\Mdef{\A}{\mathbb{A}}
\Mdef{\B}{\mathbb{B}}
\Mdef{\C}{\mathbb{C}}
\Mdef{\D}{\mathbb{D}}
\Mdef{\E}{\mathbb{E}}
\Mdef{\T}{\mathbb{T}}
\Mdef{\F}{\mathbb{F}}
\Mdef{\G}{\mathbb{G}}
\Mdef{\I}{\mathbb{I}}
\Mdef{\N}{\mathbb{N}}
\Mdef{\Q}{\mathbb{Q}}
\Mdef{\R}{\mathbb{R}}
\Mdef{\bbS}{\mathbb{S}}
\Mdef{\Z}{\mathbb{Z}}
\Mdef{\bA}{\mathbb{A}}
\Mdef{\bB}{\mathbb{B}}
\Mdef{\bC}{\mathbb{C}}
\Mdef{\bD}{\mathbb{D}}
\Mdef{\bE}{\mathbb{E}}
\Mdef{\bF}{\mathbb{F}}
\Mdef{\bG}{\mathbb{G}}
\Mdef{\bH}{\mathbb{H}}
\Mdef{\bI}{\mathbb{I}}
\Mdef{\bJ}{\mathbb{J}}
\Mdef{\bK}{\mathbb{K}}
\Mdef{\bL}{\mathbb{L}}
\Mdef{\bM}{\mathbb{M}}
\Mdef{\bN}{\mathbb{N}}
\Mdef{\bO}{\mathbb{O}}
\Mdef{\bP}{\mathbb{P}}
\Mdef{\bQ}{\mathbb{Q}}
\Mdef{\bR}{\mathbb{R}}
\Mdef{\bS}{\mathbb{S}}
\Mdef{\bT}{\mathbb{T}}
\Mdef{\bU}{\mathbb{U}}
\Mdef{\bV}{\mathbb{V}}
\Mdef{\bW}{\mathbb{W}}
\Mdef{\bX}{\mathbb{X}}
\Mdef{\bY}{\mathbb{Y}}
\Mdef{\bZ}{\mathbb{Z}}
\Mdef{\cA}{\mathcal{A}}
\Mdef{\cB}{\mathcal{B}}
\Mdef{\cC}{\mathcal{C}}
\Mdef{\mcD}{\mathcal{D}} 
\Mdef{\cE}{\mathcal{E}}
\Mdef{\cF}{\mathcal{F}}
\Mdef{\cG}{\mathcal{G}}
\Mdef{\mcH}{\mathcal{H}} 
\Mdef{\cI}{\mathcal{I}}
\Mdef{\cJ}{\mathcal{J}}
\Mdef{\cK}{\mathcal{K}}
\Mdef{\mcL}{\mathcal{L}}
\Mdef{\cM}{\mathcal{M}}
\Mdef{\cN}{\mathcal{N}}
\Mdef{\cO}{\mathcal{O}}
\Mdef{\cP}{\mathcal{P}}
\Mdef{\cQ}{\mathcal{Q}}
\Mdef{\mcR}{\mathcal{R}}
\Mdef{\cS}{\mathcal{S}}
\Mdef{\cT}{\mathcal{T}}
\Mdef{\cU}{\mathcal{U}}
\Mdef{\cV}{\mathcal{V}}
\Mdef{\cW}{\mathcal{W}}
\Mdef{\cX}{\mathcal{X}}
\Mdef{\cY}{\mathcal{Y}}
\Mdef{\cZ}{\mathcal{Z}}
\Mdef{\ca}{\mathcal{a}}
\Mdef{\ct}{\mathcal{t}}
\Mdef{\At}{\tilde{A}}
\Mdef{\Bt}{\tilde{B}}
\Mdef{\Ct}{\tilde{C}}
\Mdef{\Et}{\tilde{E}}
\Mdef{\Ht}{\tilde{H}}
\Mdef{\Kt}{\tilde{K}}
\Mdef{\Lt}{\tilde{L}}
\Mdef{\Mt}{\tilde{M}}
\Mdef{\Nt}{\tilde{N}}
\Mdef{\Pt}{\tilde{P}}
\Mdef{\tA}{\tilde{A}}
\Mdef{\tB}{\tilde{B}}
\Mdef{\tC}{\tilde{C}}
\Mdef{\tE}{\tilde{E}}
\Mdef{\tH}{\tilde{H}}
\Mdef{\tK}{\tilde{K}}
\Mdef{\tL}{\tilde{L}}
\Mdef{\tM}{\tilde{M}}
\Mdef{\tN}{\tilde{N}}
\Mdef{\tP}{\tilde{P}}
\Mdef{\ft}{\tilde{f}}
\Mdef{\xt}{\tilde{x}}
\Mdef{\yt}{\tilde{y}}
\Mdef{\Ab}{\overline{A}}
\Mdef{\Bb}{\overline{B}}
\Mdef{\Cb}{\overline{C}}
\Mdef{\Db}{\overline{D}}
\Mdef{\Eb}{\overline{E}}
\Mdef{\Fb}{\overline{F}}
\Mdef{\Gb}{\overline{G}}
\Mdef{\Hb}{\overline{H}}
\Mdef{\Ib}{\overline{I}}
\Mdef{\Jb}{\overline{J}}
\Mdef{\Kb}{\overline{K}}
\Mdef{\Lb}{\overline{L}}
\Mdef{\Mb}{\overline{M}}
\Mdef{\Nb}{\overline{N}}
\Mdef{\Ob}{\overline{O}}
\Mdef{\Pb}{\overline{P}}
\Mdef{\Qb}{\overline{Q}}
\Mdef{\Rb}{\overline{R}}
\Mdef{\Sb}{\overline{S}}
\Mdef{\Tb}{\overline{T}}
\Mdef{\Ub}{\overline{U}}
\Mdef{\Vb}{\overline{V}}
\Mdef{\Wb}{\overline{W}}
\Mdef{\Xb}{\overline{X}}
\Mdef{\Yb}{\overline{Y}}
\Mdef{\Zb}{\overline{Z}}
\Mdef{\db}{\overline{d}}
\Mdef{\hb}{\overline{h}}
\Mdef{\qb}{\overline{q}}
\Mdef{\rb}{\overline{r}}
\Mdef{\tb}{\overline{t}}
\Mdef{\ub}{\overline{u}}
\Mdef{\vb}{\overline{v}}
\Mdef{\hc}{\hat{c}}
\Mdef{\he}{\hat{e}}
\Mdef{\hf}{\hat{f}}
\Mdef{\hA}{\hat{A}}
\Mdef{\hH}{\hat{H}}
\Mdef{\hJ}{\hat{J}}
\Mdef{\hM}{\hat{M}}
\Mdef{\hP}{\hat{P}}
\Mdef{\hQ}{\hat{Q}}
\Mdef{\thetab}{\overline{\theta}}
\Mdef{\phib}{\overline{\phi}}
\Mdef{\uA}{\underline{A}}
\Mdef{\uB}{\underline{B}}
\Mdef{\uC}{\underline{C}}
\Mdef{\uD}{\underline{D}}
\Mdef{\bolda}{\mathbf{a}}
\Mdef{\boldb}{\mathbf{b}}
\Mdef{\bfD}{\mathbf{D}}
\Mdef{\fm}{\frak{m}}
\Mdef{\fp}{\frak{p}}
\Mdef{\eps}{\epsilon}
\newcommand{\connsub}{\mathrm{ConnSub}}
\newcommand{\flag}{\mbox{flag}}
\newcommand{\cEi}{\cE^{-1}}
\newcommand{\mbd}{\mathbf{d}}
\newcommand{\sub}{\mathrm{Sub}}
\newcommand{\ist}{i_{\sigma}^{\tau}}
\newcommand{\elr}[1]{E\langle #1 \rangle}
\newcommand{\elrG}[1]{E_G\langle #1 \rangle}
\newcommand{\elrN}[1]{E_{\mN}\langle #1 \rangle}
\newcommand{\elrT}[1]{E_{\mT}\langle #1 \rangle}
\newcommand{\elrP}[1]{E_{P}\langle #1 \rangle}
\newcommand{\siftyV}[1]{S^{\infty V(#1)}}
\newcommand{\GI}{\mathcal{GI}}
\newcommand{\Sigmat}{\widetilde{\Sigma}}
\newcommand{\qp}{P}
\newcommand{\pes}{\pi^e_{!}}
\newcommand{\RR}{{\mathbb{R}}}
\newcommand{\RRa}{{\mathbb{R}}_a}
\newcommand{\RRap}{{\mathbb{R}}_a^p}
\newcommand{\RRtw}{{\mathbb{R}}_{tw}}
\newcommand{\RRinv}{{\mathbb{R}}_{inv}}
\newcommand{\RRc}{{\mathbb{R}}_c}
\newcommand{\RRcb}{{\overline{\mathbb{R}}}_c}
\newcommand{\RRd}{{\mathbb{R}}_d}
\newcommand{\TC}{\mathcal{TC}}
\newcommand{\cAp}{\cA^p}
\newcommand{\cAf}{\cA^f}
\newcommand{\mT}{\mathbb T}
\newcommand{\mN}{\mathbb N}
\newcommand{\mW}{\mathbb W}
\newcommand{\adjointtriple}[5]{
\diagram
#1 \ar[rr]|-{#3} &&
#5  
\llto<1.2ex>^{#4}
\llto<-1.2ex>_{#2} 
\enddiagram}
\newcommand{\toral}{\Lambda (\mT)}
\newcommand{\etoralp}{E\toral_+}
\newcommand{\piAG}{\pi^{\cA (G)}}
\newcommand{\piAT}{\pi^{\cA (\mT)}}
\newcommand{\piAN}{\pi^{\cA (\mN)}}
\newcommand{\bbI}{\mathbb{I}}
\newcommand{\fV}{\mathfrak{V}}
\newcommand{\fW}{\mathfrak{W}}
\newcommand{\fG}{\mathfrak{G}}
\newcommand{\lr}[1]{\langle #1\rangle }
\newcommand{\suppcod}{\mathrm{scd}}
\begin{document}
\title{Rational equivariant cohomology theories with toral support}
\author{J.P.C.Greenlees}
\address{School of Mathematics and Statistics, Hicks Building, 
Sheffield S3 7RH. UK.}
\email{j.greenlees@sheffield.ac.uk}
\date{}

\begin{abstract}
For an arbitrary compact Lie group $G$, we describe a model for
rational $G$-spectra  with toral geometric isotropy and show that
there is a convergent Adams spectral sequence based on it. 
The contribution from geometric isotropy at a subgroup
$K$ of the maximal torus of $G$ is captured by a module over
$H^*(BW_G^e(K))$ with an action of $\pi_0(W_G(K))$, where $W_G^e(K)$
is the identity component of $W_G(K)=N_G(K)/K$.
\end{abstract}

\thanks{I am grateful to MSRI for support and providing an excellent environment
  for organizing these ideas during the Algebraic Topology Programme
  in 2014}
\maketitle

\tableofcontents
\section{Introduction}
\subsection{Main result}
For any compact Lie group $G$, rational $G$-equivariant cohomology theories are represented by
rational $G$-spectra. Furthermore, the category of $G$-equivariant cohomology
theories is the homotopy category of the category of rational
$G$-spectra. This category breaks up into mutually orthogonal parts,
the most important of which is the toral part: the cohomology theories
are those with toral support, and the $G$-spectra are those 
whose geometric isotropy is a set of subgroups of the maximal torus
$\mT$.

 In this paper we provide an effective method for calculating with toral $G$-spectra. 
 More precisely, we construct an
abelian category $\cA (G,toral)$ of injective dimension equal to the
rank of $G$ and a homology functor 
$$\piAG_*: \mbox{$G$-spectra} \lra \cA (G,toral), $$
so that (Theorem \ref{thm:ASS}) there is an Adams spectral sequence
$$\mathrm{Ext}^{*,*}_{\cA (G,toral)}(\piAG_*(X),
\piAG_*(Y))\Rightarrow [X,Y]^G_*$$
convergent for arbitrary rational toral $G$-spectra $X$ and $Y$. The
special cases when $G$ is a torus, $O(2)$ and $SO(3)$ follow from
earlier work \cite{s1q,o2q,so3q}. 

In all cases, the model is assembled from data at individual subgroups
$K$. The contribution from $K$ comes from the geometric $K$-fixed
point spectrum; this spectrum has an action of the Weyl group
$W_G(K)=N_G(K)/K$, with identity component $W_G^e(K)$ and discrete
quotient $W_G^d(K)=\pi_0(W_G(K))$.  When the spectrum is finite  the piece of data 
amounts to taking the $W_G^e(K)$-equivariant Borel cohomology of its
dual and viewing it as a module over $H^*(BW_G^e(K))$ with an action
of $W_G^d(K)$ (see Proposition \ref{prop:piAG} for a complete statement).

\subsection{Background}
If $G$ is a compact Lie group, the author has conjectured
\cite{gqsurvey} that one may  describe the homotopy theory of
rational $G$-spectra in algebraic terms. There are now a good
number of examples where this has been proved, including finite groups, tori
\cite{tnqcore}, $O(2)$ \cite{Barnes1, Barnes2, Barnes3} and
$SO(3)$ \cite{Kedziorek}. The results show that there is a Quillen equivalence between
the category of rational $G$-spectra and differential graded objects
in a certain abelian category $\cA (G)$. 

The category $\cA (G)$ takes
the form of a category of sheaves of modules over a sheaf of rings on the space of
closed subgroups of $G$: the stalk over  a subgroup $H$ captures 
the geometric isotropy information at $H$. Many of the most
interesting cohomology theories (including $K$-theory and elliptic
cohomology) have the property that the geometric isotropy comes
entirely from subgroups of the maximal torus.  For example, it is
apparent from the groups $O(2)$  and $SO(3)$  that the part of the model
corresponding to isotropy in the maximal torus $\mT$ is the most
significant and interesting part. The present paper is about this
toral part for an arbitrary compact Lie group $G$.

To be more precise, the endomorphism ring of the
rational sphere spectrum (the rational Burnside ring $A(G)$) acts on the
category of $G$-spectra, and in fact it consists of the equivariant sections of
the constant sheaf $\Q$ over the space $\cF G$ of subgroups of finite index in
their normalizer. This means
 that  $A(G)=C_G (\cF G, \Q)$ is the ring of equivariant continuous
 functions, where  $\cF G$ has the Hausdorff metric topology.  Any open, closed, $G$-invariant subset  $S $ of
$\cF G$ specifies an idempotent $e_S\in A(G)$, and the category of rational
$G$-spectra and $\cA (G)$ both split into two pieces corresponding to
the decomposition $1=e_S+e_{S^c}$. The part corresponding to $e_S$
consists of spectra whose geometric isotropy consists of subgroups $L$
cotoral in elements of $S$ (i.e., $L$ is normal in a subgroup $K$ in 
$S$ with $K/L$ a torus). In particular, we may take $S$ to consist
of the single conjugacy class $(\mT)$ of maximal tori, and consider the category 
$$\mbox{toral-$G$-spectra}:=e_{(\mT)} \left[\mbox{$G$-spectra/$\Q$} \right] $$
 consisting of $G$-spectra whose geometric isotropy lies inside a
 maximal torus. 

In general we may break up the category of rational $G$-spectra by
choosing a finite orthogonal decomposition of the unit of $A(G)$. For
tori $\cA (G)$ is indecomposable and $\cA (G)=\cA (G,toral)$. For $G=O(2)$ the category breaks up
into the toral (or cyclic) part $\cA (O(2), toral) =\cA (SO(2))[W]$ as described here, and a piece
corresponding to dihedral groups which is simply a graded equivariant
sheaf over a compact totally disconnected space  with $O(2)$ as an
accumulation point \cite{o2q, Barnes1, Barnes2, Barnes3}.  For $G=SO(3)$ the category again breaks up
into the toral (or cyclic) part $\cA (SO(3), toral)$ as described here, and a piece
which is simply a graded equivariant sheaf \cite{so3q, Kedziorek}; the
graded sheaf piece also breaks up into a piece 
corresponding to dihedral groups (of order 4 or more) which have
$O(2)$ as an accumulation point, and a piece for 
a number of isolated exceptional subgroups (tetrahedral, octahedral and
icosohedral).  One should not expect that the non-toral part is always
a plain graded sheaf; for example, if $G$ is the product of a circle $T$ and a group of order 2
splits into a the toral part $\cA (G,toral)$ (as here) and a second part
(corresponding to subgroups not inside the maximal torus) which is
similar in character to $\cA (T)$. The category of toral chains
 described in \cite{ratmack} gives an indication of the expected
 pattern in general. 

\subsection{Restriction to the maximal torus}

In a pattern familiar from elsewhere in the theory of transformation groups,
 we  will prove that  restricting from $G$-spectra to  $\mT$-spectra
is faithful  on the homotopy category of toral $G$-spectra provided we
remember the action of the Weyl group $\mW G$.  This  suggests that the putative algebraic model $\cA
(G,toral)=e_{(\mT)}\cA (G)$ for toral $G$-spectra could be described
in terms of the category $\cA (\mT)$ defined in \cite{tnq1}. This idea
will lead us to the construction of a category  $\cA (G,toral)$ and a homology theory on $G$-spectra with
values in $\cA (G, toral)$. We will show that this is a good invariant
in the sense that it is calculable and gives a convergent Adams spectral sequence for maps between toral spectra. 

Consideration of the torus-normalizer $\mN =N_G(\mT)$ is central to
the analysis. The general
theory simplifies for $\mN$, since the identity component is itself the maximal
torus, and  we find $\cA (\mN, toral)=\cA (\mT )[\mW G]$ (the category
of $\mW G$-equivariant objects of $\cA (\mT)$ in a sense  made precise
below).  We show that restriction from $G$-spectra to $\mN$-spectra is full
and  faithful on homotopy categories. In summary, we construct a diagram 
$$\diagram
\mbox{toral-$G$-spectra} \rto^{\res^G_{\mN}} \dto_{\piAG_*} & \mbox{toral-$\mN$-spectra} \rto^{\res^{\mN}_{\mT}} \dto_{\piAN_*} & 
\mbox{$\mT$-spectra} \dto_{\piAT_*}\\
 \cA (G,toral) \rto & \cA (\mN,toral) \rto \ar@{=}[d]&
\cA (\mT)\\
&\cA (\mT )[\mW G]&
\enddiagram$$
with convergent Adams spectral sequences based on each of the vertical homology
functors. 
Taken together with the examples mentioned above, this is strong evidence for the conjecture that the category of toral
$G$-spectra is Quillen equivalent to the category of differential graded objects in
$\cA (G, toral)$. 

\subsection{The form of the model of toral $G$-spectra}
As suggested by the known examples,  we expect the stalk of $\cA
(G,toral)$ over a subgroup $K$ to be a module over $H^*(BW_G^eK)$, where
$W_G^e(K)$ is the identity component of the Weyl group $W_GK=N_GK/K$,
and we expect an action of the discrete quotient $W_G^dK=\pi_0(W_GK)$. 
More precisely, we expect a module over the twisted group ring 
$$\RRtw^G (K)=H^*(BW_G^eK) [W_G^dK].  $$
Understanding the restriction from $G$-spectra to $\mN$-spectra involves some rather
interesting pieces of invariant theory.

The relationship between the stalks is given by the localization theorem for cotoral
inclusions. If $G=\mT$ is a torus and $L$ is cotoral in $K$ then we have a group homomorphism 
$W_GL=\mT /L \lra \mT /K =W_GK$ which forms the basis of this.  For a general
group $G$ we cannot expect a map  $N_GL \lra N_GK$  (consider $L=1$), so we think of rings and modules associated
to cotoral {\em flags} of subgroups, and this restores functoriality.  We  recall how this works for tori
in the next subsection.

\subsection{The model for $\mT$-spectra}
\label{subsec:modelAT}
We sketch the construction of the model $\cA (\mT)$ for rational
$\mT$-spectra, referring to \cite{AGs} for fuller details (the model
described  here is the $(a,f)$-model, based on  {\bf
  f}lags involving {\bf a}ll closed subgroups). The starting point
of the discussion is the poset $\Sigma_a$ consisting of {\bf a}ll closed
subgroups. The partial order is given by cotoral inclusion, so that 
$K\supseteq L$ if $L$ is a subgroup of $K$ and $K/L$ is a torus. 
We then consider the poset $\flag (\Sigma_a)$ consisting of
flags $(K_0\supset K_1\supset \cdots \supset K_s)$ in $\Sigma_a$.
We may define a $\Sigma_a$-diagram $\RRa $ of rings by 
$$\RRa (K):=H^*(B\mT /K). $$
If $K\supseteq L$ then the projection map $\mT /K\lla \mT /L$ induces the
inflation map $\RRa (K)\lra \RRa (L)$, making $\RRa$ into a
contravariant functor on $\Sigma_a$.

Using Euler classes, we may form a $\flag (\Sigma_a)$-diagram of
rings. Since the values on flags of length 0 agree with those of $\RRa$, we
continue to use $\RRa$ for the functor on flags. First, if $K\supseteq L$ we may  consider the set 
$$\cE_{K/L}:=\{ e(W)\st W\in \mathrm{Rep} (\mT /L), W^K=0\}$$
of Euler classes of $K$-essential representations of $\mT /L$. Here 
$e(W)\in H^{|W|}(B\mT /L)$ is the Euler class of $W$. Now we may define the flag diagram $\RRa$ by 
$$\RRa (K_0\supset K_1\supset \cdots \supset
K_s):=\cEi_{K_0/K_s}H^*(B\mT /K_s).$$
We note that this only depends on the first and last term in the
flag and it is a {\em covariant} functor on $\flag (\Sigma_a)$.
 It is also important to note that if $K\supset L$ the values at
$K$ and $L$ are not linked directly in $\flag (\Sigma_a)$, but rather
through the zig-zag induced by the inclusions $(K)\lra (K\supset L)\lla (L)$. 

The category $\cA (\mT)$ is a category of modules $M$ over the $\flag
(\Sigma_a)$-diagram $\RRa$: thus $M$ is a diagram of abelian groups so
that if $E\subseteq F$, the map $M(E)\lra M(F)$ is a map of modules
over $R(E)\lra R(F)$. The modules in $\cA (\mT)$ are required to be quasicoherent
($qc$), extended ($e$) and $\cF$-continuous. A module is {\em quasi-coherent} if    
 the value is determined by the last term in the flag
by extensions of scalars: if $F=(K_0\supset K_1\supset \cdots \supset
K_s) $
then the inclusion $(K_s)\lra F$ induces an isomorphism 
$$M(F) =\RRa (F) \tensor_{\RRa (K_s)}M(K_s). $$
A module is {\em extended} if 
 the value is determined by the first term in the flag
by extensions of scalars:
 if $F=(K_0\supset K_1\supset \cdots \supset
K_s) $
then the inclusion $(K_0)\lra F$ induces an isomorphism 
$$M (F) =\RRa (F) \tensor_{\RRa
 (K_0)}M(K_0). $$
The $\cF$-continuity condition is a finiteness condition described in
Subsection \ref{subsec:qceG} below.

Since the values of both $\RRa$ and $M$ are determined by the first
and last term in any flag we will sometimes simplify the notation by
giving the value only on cotoral pairs $(K\supset L)$. The point of
defining $\RRa$ on flags is to give functoriality and control automorphisms.

Our approach to constructing $\cA (G,toral)$  is to take into account
the action of the Weyl group  $\mW G=N_G(\mT)/\mT$ on the poset $\Sigma_a(\mT )$ of 
all subgroups of the maximal torus. Indeed,  $\mW G$ acts on the
diagram $\RRa$ of polynomial rings, and it turns out that by descent
this gives us the model $\cA (G,toral)$. We develop the appropriate
machinery, and finally give the definition in Subsection \ref{subsec:model}

\subsection{Layout of the paper}
The paper is divided into two parts.  The first (``Algebra'') develops the algebraic
framework and second (``Topology'') applies it to calculations with $G$-spectra.

In Section \ref{sec:Lie} we introduce notation from the theory of compact Lie groups and make some
elementary observations, and in Section
\ref{sec:HBG} we recall facts about the cohomology of classifying
spaces of compact Lie groups. 

We then spend several sections developing machinery to discuss
categories of modules over a diagram of rings on which a finite group
acts. In Section \ref{sec:equivdiagrams} the categorical setup is
described and in Section  \ref{sec:equivringdiagrams} this is
specialised to categories of modules over an equivariant diagram of
rings and the fundamental descent
adjunction is proved. Finally, Section \ref{sec:Liediagrams}
specializes to the example arising from compact Lie groups, and gives
the definition of $\cA (G,toral)$. The fact that the descent
adjunction respects quasi-coherent extended modules contains
information on which  $\mN$-equivariant objects are restrictions of
$G$-equivariant objects; this is surprisingly subtle and
treated in Section \ref{sec:GspNsp}. As a first step towards homotopy theory,
we then consider the homological algebra of $\cA (G,toral)$,
identifying enough injectives and showing its injective dimension is equal to
the rank of $G$.

We then turn to topology. The fundamental result proved in Section
\ref{sec:toraldetect}
is that toral phenomena are detected on restriction to the maximal
torus. In preparation for work on the Adams spectral sequence we then 
reformulate some well known properties of Borel cohomology in Section 
\ref{sec:Borel}; this is the route by which the classical theory of
characteristic classes of principal bundles enters the model. 

Section \ref{sec:GspectratoAG} explains the relationship between $\cA
(\mT)$ and $\cA (G,toral)$ and thereby allows us to construct the functor
$\piAG_*$ from $G$-spectra to $\cA (G,toral)$. This is then used in
Section \ref{sec:ASS} to construct the Adams spectral
sequence, with the hard work deferred to Section \ref{sec:objects}
where enough injectives are realized, and Section \ref{sec:mapstoinj} 
where it is shown that maps into
the resulting spectra are detected in $\cA (G,toral)$. 
The Adams spectral sequence lets one calculate maps, and
this is complemented in Section \ref{sec:essepi} by a proof that the
functor $\piA_*$ is essentially surjective: all objects of $\cA
(G,toral)$ do occur as $\piAG_*(X)$ for a toral $G$-spectrum $X$. 
Finally, Section \ref{sec:hg} explains how restriction, induction and
coinduction are reflected at the level of algebraic models. 

\subsection{Conventions}

All groups will be compact Lie groups, and if connectedness is
required this will be stated explicitly. All subgroups will be
required to be closed. Generally, containment of subgroups 
follows the alphabet, as in $G\supseteq H$.

Cohomology is unreduced unless indicated, and always has rational coefficients.

\part{Algebra}
\section{Weyl groups}
\label{sec:Lie}

The algebraic input to our results is the classical structure and
representation theory of compact Lie groups. Although this is
well-known, the recollection of standard facts gives an opportunity 
to  introduce notation. Readers have found the list of standard
notation  in Subsection \ref{subsec:notn} valuable. We recommend 
consideration of the rotation group $G=SO(3)$ as an example
to illustrate results.

\subsection{Two types of Weyl groups}
For a compact Lie group $G$ we write
$G_e$ for its identity component and $G_d=G/G_e=\pi_0(G)$ for
the discrete quotient. A closed subgroup $K$ of $G$  has 
 normalizer $NK=N_GK$, and  Weyl group $WK=W_GK=N_GK/K$. 
More generally given a flag $F=(K_0\supset K_1\supset \cdots \supset K_s)$ of
subgroups of $G$ we write
$$N_G(F)=N_G(K_0)\cap \ldots \cap N_G(K_s)$$
for the subgroup normalizing all terms in the flag and 
$$W_G(F)=N_G(F)/K_s$$
for its Weyl group. 

Moving on to the theory of compact Lie groups,  we write 
$\mT =\mT G$ for the maximal torus of $G$,  $\mN =\mN G=N_G(\mT)$ for
its normalizer. The  Weyl group $W_G(\mT)=N_G(\mT)/\mT$  of the
maximal torus is a finite group that plays a central role in the
theory, so we use the notation
$$\mW G = W_G\mT . $$
Since $W_G(G)=1$, there is little danger of confusion as long as the
reader bears in mind there are two meanings of the phrase `Weyl
group'. 

For most of this paper we  will suppose $K$ is a  subgroup of $\mT$, so that $\mT
\subseteq N_GK$.

\subsection{Weyl groups of Weyl groups}
We will need to consider $WK$ as a Lie group in its own right,
with maximal torus $\mT WK$ and Weyl group $\mW WK=W_{WK}(\mT
WK)=N_{WK}(\mT WK)/\mT WK$. We may simplify this notation slightly. 

\begin{lemma}
The maximal torus of $WK$ is given by 
$\mT WK=\mT/K$. 
\end{lemma}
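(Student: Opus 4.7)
The plan is to verify that $\mT/K$ is a torus inside $W_G K$, and then show maximality. For the first part, since $K\subseteq \mT$ and $\mT$ is abelian, $K$ is normal in $\mT$, so $\mT\subseteq N_G K$ and $\mT/K$ is a quotient of a torus, hence itself a torus inside $W_GK=N_G(K)/K$.

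For maximality, I would take a torus $T'\subseteq W_GK$ containing $\mT/K$ and show $T'=\mT/K$. Let $\pi\colon N_GK\to W_GK$ be the projection and set $S:=\pi^{-1}(T')$. Since $T'=S/K$ is connected, $S=S_eK$; but $K\subseteq \mT\subseteq S_e$ (the latter because $\mT$ is connected and contained in $S$), so $S=S_e$ is itself connected. Also, $S/K$ is abelian, so $[S,S]\subseteq K\subseteq \mT$. A short computation then gives $\mT\trianglelefteq S$: for $s\in S$ and $t\in \mT$,
$$sts^{-1}=[s,t]\,t\in \mT\cdot\mT=\mT,$$
so $s\mT s^{-1}\subseteq \mT$, with equality by dimension.

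Now $\mT$ is a maximal torus of $G$, and any subgroup of $G$ containing it has $\mT$ as a maximal torus, so in particular $\mT$ is a maximal torus of $S$. The classical fact that $N_S(\mT)/\mT$ is finite, combined with $N_S(\mT)=S$ (from $\mT\trianglelefteq S$), shows that $S/\mT$ is finite. Since $S$ is connected, $S=\mT$, and hence $T'=S/K=\mT/K$.

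The only subtle point — more a care point than a genuine obstacle — is arranging that $\mT$ is normal in the lifted subgroup $S$; this does not follow from the hypotheses on its own, but falls out cleanly from the commutator containment $[S,S]\subseteq \mT$ dictated by $S/K$ being abelian. Once normality is in hand, the standard finiteness of the Weyl group of $S$ forces $S=\mT$.
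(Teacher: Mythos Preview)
Your proof is correct. It follows the same overall strategy as the paper's proof---lift a putative larger torus $T'\subseteq W_GK$ to its preimage $S\subseteq N_GK$ and show $S=\mT$---but the endgame differs.

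The paper observes that $S/\mT$ is a quotient of the torus $S/K$, hence itself a torus, and then invokes the structural fact that an extension of a torus by a torus is a torus (equivalently, a compact connected Lie group whose derived subgroup lies in an abelian subgroup is abelian); maximality of $\mT$ in $G$ then forces $S=\mT$. You instead establish $\mT\trianglelefteq S$ explicitly via the commutator computation, note that $\mT$ is a maximal torus of $S$, and use finiteness of the Weyl group $N_S(\mT)/\mT=S/\mT$ together with connectedness of $S$ to conclude $S=\mT$. Your route is slightly more self-contained, avoiding the implicit appeal to the structure theorem for compact connected Lie groups, at the cost of a few extra lines. Either argument is perfectly adequate here.
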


\begin{proof}
Certainly $\mT/K$ is a torus in $WK$. If there were a bigger one it
would have the form $T'/K$ for some subgroup $T'$ of $NK$ containing
$\mT$. Then we would have a chain $T'\supseteq \mT\supseteq K$. The
group  $T'/\mT$ is a quotient of the torus $T'/K$ and hence is itself a torus. Thus $T'$
is itself a torus; by maximality of $\mT$ we have $\mT=T'$. 
\end{proof}

\begin{lemma}
\label{lem:WGK}
The normalizer of the maximal torus of $WK$ is given by 
$$N_{WK}(\mT WK)=N_{WK}(\mT/K)=N_G(\mT\supseteq K)/K=(N_G\mT\cap N_GK)/K.$$
It follows that the toral Weyl group of $W_GK$, 
$$\mW W_GK= (N_G\mT \cap N_GK)/\mT\subseteq N_G\mT/\mT=\mW G$$
is the subgroup of $\mW G$ normalizing $K$. With the usual notation
for the isotropy group of the action of $\mW G$ on the set of
subgroups of $\mT$ we have 
$$\mW (W_GK)=(\mW G)_K.$$ 
\end{lemma}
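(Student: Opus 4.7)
The plan is to unwind the definitions using the previous lemma ($\mT WK = \mT/K$) and the standing assumption $K\subseteq \mT$, which forces $\mT K = \mT$ so that the preimage of $\mT/K$ under the quotient $N_GK\lra WK=N_GK/K$ is exactly $\mT$ itself.

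First I would identify $N_{WK}(\mT/K)$. An element $nK\in WK$ normalizes $\mT/K$ iff conjugation by $n$ preserves the preimage of $\mT/K$ in $N_GK$, i.e.\ iff $n\mT n^{-1}=\mT$. Since conjugation in $G$ preserves dimension, $n\mT n^{-1}$ is a torus of the same dimension as $\mT$, so the condition $n\mT n^{-1}\subseteq \mT$ already forces equality. Thus $nK\in N_{WK}(\mT/K)$ iff $n\in N_G\mT\cap N_GK=N_G(\mT\supseteq K)$, giving the displayed identification
\[
N_{WK}(\mT/K)=(N_G\mT\cap N_GK)/K.
\]

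Next I would divide by $\mT WK=\mT/K$ to obtain
\[
\mW W_GK = N_{WK}(\mT/K)/(\mT/K) = \bigl((N_G\mT\cap N_GK)/K\bigr)\big/(\mT/K) = (N_G\mT\cap N_GK)/\mT,
\]
where the last step uses the third isomorphism theorem (both $\mT$ and $K$ are normal in $N_G\mT\cap N_GK$ --- $\mT$ by definition, and $K$ because $K\subseteq \mT$ is central in $\mT$). This visibly sits inside $N_G\mT/\mT=\mW G$, as the inclusion $N_G\mT\cap N_GK\hookrightarrow N_G\mT$ has kernel $\mT$ on both sides.

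Finally I would identify this subgroup as the stabilizer $(\mW G)_K$. Because $\mT$ is abelian, every element of $\mT$ centralizes (hence normalizes) its own subgroup $K$, so the conjugation action of $N_G\mT$ on subgroups of $\mT$ descends to an action of $\mW G$ on $\sub(\mT)$. An element $n\mT\in\mW G$ fixes $K$ precisely when $nKn^{-1}=K$, i.e.\ $n\in N_GK$; combined with $n\in N_G\mT$ this gives exactly the subgroup $(N_G\mT\cap N_GK)/\mT$ computed above. There is no real obstacle here --- the whole statement is bookkeeping, with the only content being the remark that $\mT K=\mT$ collapses the preimage computation in step one.
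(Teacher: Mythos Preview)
Your proof is correct and follows essentially the same route as the paper: both identify $N_{WK}(\mT/K)$ by lifting to $N_GK$ and observing that the preimage of $\mT/K$ is $\mT$ itself (since $K\subseteq\mT$), then read off the Weyl-group and stabilizer identifications. Your dimension remark (that $n\mT n^{-1}\subseteq\mT$ forces equality) is harmless but not needed once you use the standard fact that normalizers lift along quotients; otherwise the arguments coincide, with yours spelling out the third isomorphism step and the stabilizer description a little more explicitly than the paper does.
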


\begin{proof}
The first equality is the previous lemma. Now any element $g$ of $G$
normalizing $\mT\supseteq K$ is in $NK$ and hence defines an element
$gK$  of
$WK$. We then have $(gK)(tK)(gK)^{-1}=gtg^{-1}K$. This is in $\mT$ by
hypothesis, and  hence we have
a homomorphism 
$$N_G(\mT\supseteq K)\lra N_{WK}(\mT/K). $$
Evidently $K$ is in the kernel, and since $N_{WK}(\mT/K)\subseteq WK$ we
have a monomorphism 
$$N_G(\mT\supseteq K)/K \lra N_{WK}(\mT/K) . $$
Now suppse $gK \in WK$ normalizes $\mT/K$, which is to say that for any
$t\in \mT$, 
$$\mT/K\ni (gK)(tK)(gK)^{-1}=gtg^{-1}K$$
It follows that $gtg^{-1} \in \mT$ and $g$ normalizes $\mT$. 
\end{proof}

\subsection{Summary of Notation}
\label{subsec:notn}
Associated to a subgroup $K$ of $\mT$  we have

\begin{itemize}
\item $G$, the ambient compact Lie group
\item $G_e$ the identity component of $G$
\item $G_d=G/G_e=\pi_0(G)$ the group of components of $G$
\item $\mT = \mT G$ the maximal torus of $G$
\item $\mN = \mN G=N_G(\mT)$
\item $\mW = \mW G=\mN G/\mT G$ the toral Weyl group of $G$
\item $K$ a closed subgroup of $\mT G$
\item $NK=N_GK$
\item $WK=W_GK$; with  identity component $W^eK=W_G^eK=(W_GK)_e$ and
 component group $W^dK=W_G^dK=(W_GK)_d$
\item $\mT WK=\mT/K$
\item $\mN WK =N_{WK}(\mT WK)=(N\mT\cap NK)/K$
\item $\mW WK =(N\mT \cap NK)/\mT =(\mW G)_K$ acting on $\mT WK=\mT /K$
\end{itemize}

\section{Cohomology of classifying spaces of compact Lie groups}
\label{sec:HBG}
The relationship between the rational cohomology of classifying
spaces of $G$, $\mN$ and $\mT$  proved by Borel is fundamental to our
entire analysis. 

\subsection{Cohomology of classifying spaces and free spectra}

Borel's  calculation of the rational cohomology of classifying spaces
is as follows. 
\begin{lemma} (Borel)
For a compact Lie group $G$ with maximal torus $\mT$,  $\mN =N_G(\mT)$
and $\mW =N_G(\mT)/\mT$ we have 
$$H^*(BG)=H^*(B\mN)=H^*(B\mT )^{\mW}.\qqed $$
\end{lemma}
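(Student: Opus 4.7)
The plan is to deduce both equalities from the rational Serre spectral sequences of the fibrations
$$G/\mT \lra B\mT \lra BG \quad \text{and} \quad G/\mN \lra B\mN \lra BG$$
associated to the chain $\mT \subseteq \mN \subseteq G$, combined with classical facts about the cohomology of the flag variety.

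First I would recall the rational cohomology of the fibers. The flag variety $G_e/\mT$ admits a Schubert cell decomposition into even-dimensional cells indexed by $\mW G_e$, so $H^*(G/\mT;\Q)$ is concentrated in even degrees; Chevalley's theorem identifies it with the regular representation of $\mW$ as a $\mW$-module, so in particular $H^*(G/\mT;\Q)^{\mW} = \Q$ is concentrated in degree $0$. Since $\mW$ acts freely on $G/\mT$ with orbit space $G/\mN$, rational cohomology of the quotient equals $\mW$-invariants, giving $H^*(G/\mN;\Q) = \Q$.

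For the equality $H^*(BG) = H^*(B\mT)^{\mW}$, I would run the Serre spectral sequence of $G/\mT \lra B\mT \lra BG$. Since $H^*(G/\mT)$ and $H^*(BG)$ are both concentrated in even degrees, there is no room for differentials and it collapses at $E_2$, yielding
$$H^*(B\mT) \iso H^*(BG) \tensor H^*(G/\mT)$$
as $H^*(BG)[\mW]$-modules, with $\mW$ acting trivially on $H^*(BG)$ (conjugation by elements of $\mN$ is inner in $G$, so $\pi^*$ factors through the invariants) and by the regular representation on the fiber. Taking $\mW$-invariants and using $H^*(G/\mT)^{\mW} = \Q$ gives the claim.

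The equality $H^*(BG) = H^*(B\mN)$ is then immediate: the Serre spectral sequence of $G/\mN \lra B\mN \lra BG$ has fiber with rational cohomology $\Q$ concentrated in degree $0$, so it collapses trivially to an isomorphism $H^*(BG) \lraiso H^*(B\mN)$.

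The main obstacle is handling non-connected $G$. When $G_d \neq 1$, one must ensure that $H^*(BG;\Q)$ is still concentrated in even degrees, that the Chevalley identification of $H^*(G/\mT)$ as the regular $\mW$-representation persists, and that the $G_d$-action fits compatibly with the $\mW G_e$-action inside $\mW G$. Rationally these all reduce to the connected case by analyzing the finite covering $BG_e \to BG$ with deck group $G_d$ and using that $\mN$ meets every component of $G$, so that $\mW G = (N_G\mT)/\mT$ already captures the combined toral-and-component symmetry.
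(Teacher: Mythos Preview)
The paper does not actually prove this lemma: the statement is attributed to Borel and closed with \verb|\qqed|, indicating the proof is omitted as classical. So strictly speaking there is no paper-proof to compare against.

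That said, the paper later proves a generalisation (Lemma~\ref{lem:class}) from which the Borel lemma follows by taking $Z=E\Gamma$, and the argument there differs from yours in one respect. For $H^*(B\mN)=H^*(B\mT)^{\mW}$ the paper uses the Serre spectral sequence of the fibration $B\mT \lra B\mN \lra B\mW$ with finite fibre group $\mW$; over $\Q$ this collapses immediately to give $H^*(B\mN)=H^*(B\mT)^{\mW}$ without any appeal to the structure of $H^*(G/\mT)$ or Chevalley's theorem. Your route via $G/\mT \lra B\mT \lra BG$ and the regular-representation identification of $H^*(G/\mT)$ is correct and entirely classical, but it imports more input than is needed. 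For $H^*(BG)=H^*(B\mN)$ both you and the paper use the same argument: $G/\mN$ is rationally acyclic, so the Serre spectral sequence of $G/\mN \lra B\mN \lra BG$ degenerates. Your treatment of the non-connected case is appropriate; the paper handles this implicitly via the same finite-group-invariants mechanism (cf.\ the diagram following Lemma~\ref{lem:NGNGe}).
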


We may apply this to Weyl groups to see 
$$H^*(BWK)=H^*(B\mN WK) =H^*(B\mT WK )^{\mW WK} =H^*(B\mT /K )^{\mW WK} .$$

Cohomology of classifying spaces plays a fundamental role in
equivariant stable homtopy theory. 
\begin{thm} (Greenlees-Shipley \cite{gfreeq2}) The category of  free rational $W_GK$-spectra
 is Quillen equivalent to the category of  torsion modules over the twisted group ring
$$H^*(BW_G^eK)[W_G^dK] =
H^*(B\mT /K)^{\mW W_G^eK}[W_G^dK].\qqed $$
\end{thm}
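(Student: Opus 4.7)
The plan is to deduce this from Borel's lemma together with the Greenlees--Shipley machinery for free rational spectra. Set $H = W_GK$, with identity component $H_e = W_G^eK$ and discrete quotient $H_d = W_G^dK$.

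I would first verify the ring identification. By the lemmas of Section \ref{sec:Lie}, the maximal torus of $H$ (and hence of $H_e$) is $\mT/K$, and its toral Weyl group is $(\mW G)_K = \mW W_G^eK$. Borel's lemma applied to the connected compact Lie group $H_e$ then gives
$$H^*(BH_e) = H^*(B\mT/K)^{\mW W_G^eK},$$
and adjoining $H_d$ with the appropriate twist on both sides gives the stated identification of twisted group rings.

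For the Quillen equivalence itself, I would appeal to the main theorem of \cite{gfreeq2}. The strategy there has four ingredients: (i) identify a compact generator of free rational $H$-spectra (essentially $EH_+$, after restriction to the free part); (ii) compute its endomorphism DGA, whose cohomology is $H^*(BH_e)[H_d]$, where the group-ring structure arises from the conjugation action of $H_d$ on $H_e$ and the induced action on $H^*(BH_e)$; (iii) use rational formality to replace the endomorphism DGA by its cohomology; (iv) apply a Schwede--Shipley type Morita equivalence to pass from free $H$-spectra to DG-modules over this ring, with the torsion condition (local nilpotence of the positive-degree part of $H^*(BH_e)$) corresponding to the spectra being built from $EH_+$ via homotopy colimits rather than limits.

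The main obstacle is step (ii): showing that the disconnected component $H_d$ contributes only a twisted group ring factor, rather than a genuinely richer crossed-product DGA with higher homotopy structure. Rationality is essential here, since the rational cohomology of the finite group $H_d$ vanishes in positive degrees, and this forces the extension $1 \to H_e \to H \to H_d \to 1$ to split at the cohomological level, collapsing any a priori twisted DGA to the flat twisted group ring on display.
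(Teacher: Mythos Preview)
The paper does not give its own proof of this theorem: it is stated as a quotation from \cite{gfreeq2} and closed immediately with a \qqed. The only part of the displayed statement that the present paper actually justifies is the ring identification $H^*(BW_G^eK) = H^*(B\mT/K)^{\mW W_G^eK}$, which follows exactly as you say from Borel's lemma together with the computations $\mT W_GK = \mT/K$ and $\mW W_G^eK = (\mW G)_K^e$ from Section~\ref{sec:Lie}. Your first paragraph therefore matches the paper precisely.

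Your remaining paragraphs sketch a plausible outline of the Morita/formality argument from \cite{gfreeq2}, but there is nothing in the present paper to compare that against, since the Quillen equivalence is simply imported as a black box.
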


This embodies the role of the cohomology of classifying spaces in
modelling rational stable equivariant homotopy theory.

\subsection{Identity components 1}
The maximal torus only depends on the identity component of a group,
so  $\mT (G_e)=\mT G$. 

\begin{lemma}
\label{lem:NGNGe}
There are short exact sequences
$$1\lra N_{G_e}\mT \lra N_{G}\mT \lra G_d\lra 1$$
$$1\lra \mW G_e \lra \mW G  \lra G_d\lra 1$$
\end{lemma}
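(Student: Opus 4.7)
The plan is to establish the first exact sequence directly, then derive the second by quotienting both extreme terms by $\mT$, since $\mT \subseteq G_e$ (recall that $\mT = \mT G = \mT G_e$ is connected and thus lies in the identity component).

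First I would identify the kernel of the restriction-to-components map. Since $G_e \unlhd G$, the intersection $N_G\mT \cap G_e$ is normal in $N_G\mT$, and it obviously equals $N_{G_e}\mT$. So the composite
\[
N_G\mT \hookrightarrow G \twoheadrightarrow G/G_e = G_d
\]
has kernel exactly $N_{G_e}\mT$, giving an injection $N_G\mT / N_{G_e}\mT \hookrightarrow G_d$.

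The crux is surjectivity of $N_G\mT \to G_d$: every component of $G$ must contain an element normalizing $\mT$. This is the standard Frattini-style argument using conjugacy of maximal tori. Given any $g \in G$, the subgroup $g\mT g^{-1}$ is again a maximal torus of $G_e$ (since $G_e \unlhd G$ forces $g\mT g^{-1} \subseteq G_e$, and it is still a maximal torus because conjugation is an automorphism). By the conjugacy of maximal tori in the connected group $G_e$, there exists $h \in G_e$ with $h(g\mT g^{-1})h^{-1} = \mT$, i.e.\ $hg \in N_G\mT$. Thus $g \in G_e \cdot N_G\mT$, and the image of $N_G\mT$ hits every component. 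This establishes the first exact sequence; this surjectivity step is the only substantive part, and the potential obstacle is simply remembering that $g\mT g^{-1}$ lands in $G_e$ so that the conjugacy-of-tori theorem applies inside $G_e$.

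For the second sequence, I would quotient each of the first two terms of the first exact sequence by $\mT$. Since $\mT \subseteq N_{G_e}\mT \subseteq N_G\mT$ and $\mT$ is normal in each, we obtain
\[
1 \lra N_{G_e}\mT / \mT \lra N_G\mT / \mT \lra G_d \lra 1,
\]
which by definition is
\[
1 \lra \mW G_e \lra \mW G \lra G_d \lra 1.
\]
Exactness at the outer terms is inherited from the first sequence, and exactness in the middle follows from the third isomorphism theorem applied to $\mT \subseteq N_{G_e}\mT \subseteq N_G\mT$.
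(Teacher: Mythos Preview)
Your proof is correct and follows essentially the same approach as the paper: both establish surjectivity of $N_G\mT \to G_d$ via the Frattini-type argument using conjugacy of maximal tori in $G_e$, and identify the kernel as $N_G\mT \cap G_e = N_{G_e}\mT$. The paper's proof is terser and leaves the deduction of the second sequence implicit, but the substance is identical.
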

\begin{proof} $G_e$ acts transitively on maximal tori. So for any
  $\gamma \in G_d$ represented by $\tilde{\gamma}\in G$ there is an $x \in
  G_e$ with $\mT=(\mT^{\tilde{\gamma}})^x$ and $\tilde{\gamma}x \in
  N_G(\mT)$. Since $N_{G_e} (\mT)=N_G(\mT)\cap G_e$, this gives the
  exact sequence. 
\end{proof}

This fits well with the following picture
$$\diagram 
G=G_e\cdot G_d&
H^*(BG)\dto_{\cong}\rto^{\cong} &H^*(BG_e)^{G_d}\dto^{\cong} \\
N_G\mT =N_{G_e}\mT \cdot G_d& 
H^*(BN_G\mT )\dto_{\cong}\rto^{\cong}
&H^*(BN_{G_e}\mT)^{G_d}\dto^{\cong} \\
W_G\mT =W_{G_e}\mT \cdot G_d& 
H^*(B\mT )^{\mW G}\rto^{\cong} &(H^*(B\mT)^{\mW G_e})^{G_d}
\enddiagram$$

\subsection{Identity components 2}

 We note that $\mN$ acts on the set subgroups of $\mT$
by conjugation, and that this passes to an action of $\mW$. 
Recall that
$$\mW W_GK =(N_GK \cap \mN)/\mT=(\mW G)_K.$$ 

\begin{lemma}
There is a short exact sequence
$$1 \lra \mW ( W_G^eK) \lra \mW  (W_GK) \lra W_G^dK\lra 1. $$
Under the identification $\mW (W_GK)=(\mW G)_K$, the subgroup $\mW (W_G^eK)$
corresponds to  the set of elements of the toral Weyl
group $\mW G$ represented by the identity component of $N_G(K)$. 
$$(K\cdot (N_GK)_e \cap \mN)/\mT = \mW (W_G^eK)$$
\end{lemma}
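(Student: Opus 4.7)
The plan is to derive the short exact sequence by applying the earlier Lemma \ref{lem:NGNGe} to $W_GK$ in place of $G$, and then to identify the resulting subgroup $\mW(W_G^eK)\subseteq\mW(W_GK)=(\mW G)_K$ concretely inside $\mW G$ by lifting through the quotient $N_GK\twoheadrightarrow W_GK$.

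First, the compact Lie group $H:=W_GK$ has identity component $H_e=W_G^eK$ and discrete quotient $H_d=W_G^dK$ by definition, so Lemma \ref{lem:NGNGe} applied to $H$ immediately gives
$$1\lra \mW H_e\lra \mW H\lra H_d\lra 1,$$
which is the claimed short exact sequence. This is the easy half; the content of the lemma is in the second statement.

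Next I would identify $W_G^eK$ with $K\cdot(N_GK)_e/K$. Since $K$ is (closed but not necessarily connected) normal in $N_GK$, the quotient map $q\colon N_GK\to W_GK$ is surjective with kernel $K$, and the image $q((N_GK)_e)$ is a connected subgroup of finite index, hence equals $(W_GK)_e$; thus $W_G^eK=K\cdot(N_GK)_e/K$. Together with the previous lemma giving $\mT W_GK=\mT/K$, the maximal torus of $W_G^eK$ is also $\mT/K$, and so
$$\mW(W_G^eK)=N_{W_G^eK}(\mT/K)\bigl/(\mT/K).$$

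The main step is to compute the numerator inside $(\mW G)_K$. Repeating the argument in the proof of Lemma \ref{lem:WGK}: an element $gK\in W_G^eK$ (so $g\in K\cdot(N_GK)_e$) normalizes $\mT/K$ if and only if $gtg^{-1}\in\mT$ for every $t\in\mT$, which is equivalent to $g\in\mN$. Hence
$$N_{W_G^eK}(\mT/K)=\bigl(K\cdot(N_GK)_e\cap\mN\bigr)/K,$$
and dividing by $\mT/K$ gives the asserted identification
$$\mW(W_G^eK)=\bigl(K\cdot(N_GK)_e\cap\mN\bigr)/\mT\subseteq(\mW G)_K=\mW(W_GK).$$

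The only mildly subtle point is the identification $(W_GK)_e=K\cdot(N_GK)_e/K$: because $K$ may be disconnected, one must argue via the continuous open surjection $q$ (its image of a connected subgroup of finite index is the identity component of the target) rather than by any naive correspondence of identity components. Everything else is a direct application of Lemma \ref{lem:NGNGe} and the style of argument already used in Lemma \ref{lem:WGK}.
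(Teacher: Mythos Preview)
Your proof is correct and follows essentially the same approach as the paper: apply Lemma \ref{lem:NGNGe} to $W_GK$ for the short exact sequence, then use $W_G^eK=K\cdot(N_GK)_e/K$ together with the argument style of Lemma \ref{lem:WGK} to identify $\mW(W_G^eK)$ inside $(\mW G)_K$. In fact you supply more detail than the paper does, which simply records the two key identifications $\mW(W_G^eK)=N_{W_G^eK}(\mT/K)/(\mT/K)$ and $W_G^eK=K\cdot(N_GK)_e/K$ without further comment.
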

\begin{remark}
The Weyl group $\mW W_G^eK$ can be very small or very large. At one
extreme, if $K=\mT$ it is trivial. At the other,  if $K=1$ then $N_G(K)=G$ and if $G$ is connected we
obtain the entire Weyl group $\mW G$. 
\end{remark}
\begin{proof}
The short exact sequence is obtained by applying Lemma \ref{lem:NGNGe} to
$WK$. Now note  $\mW (W_G^eK)=N_{W_G^eK}(\mT /K)/(\mT /K)$ and 
$W_G^eK=K\cdot (N_GK)_e /K$. 
\end{proof}

\section{Equivariant diagrams}
\label{sec:equivdiagrams}

We are going to discuss diagrams of rings and modules with group
action. The basic examples arise from the algebraic models of rational
$\mT$-spectra \cite{AGs}, recalled briefly in Subsection
\ref{subsec:modelAT}.  The diagram shapes $\Sigma$
come from the set $\Sigma_a (\mT)$ of all closed subgroups of $\mT$
under cotoral inclusion (in the present context, simply inclusions with
connected quotient).  One such poset $\Sigma$  is $\Sigma_a$ itself, but we also need to consider
the poset $\flag (\Sigma_a)$ of flags in  $\Sigma_a$. Accordingly, we 
discuss the relevant structures with $\Sigma $ unspecified, which has
the added benefit of clarifying the structure.

\subsection{Diagrams with an action}
We need to consider the general setup of a group $W$ acting on
the right of a poset $\Sigma$. We want a notion of equivariant $\Sigma$-diagrams in a
category $\C$. We start by considering the  functor category
$\C^{\Sigma}$. This admits
an action of $W$, where the image of a functor $F: \Sigma \lra \C$
under $w\in W$ is the functor $w_*F$ defined by 
$$(w_*F)(\sigma) :=F(\sigma^w). $$
One quickly verifies  $v_*w_*F=(vw)_*F$ and $e_*F=F$. 

An equivariant diagram is then a diagram $F$ with additional
structure. We specify an action by $W$ on $F$ by giving maps 
$$w_m: F \lra w_*F$$
with $e_m=1$ and $v_mw_m=(vw)_m$. It is more flexible to give an
alternative point of view in which an equivariant $\Sigma$-diagram is
just a diagram of a more complicated shape. 

\subsection{Orbifold posets}
We want to consider a class of categories $A$ that are based on a poset
$\Sigma$, but with automorphisms added. 

\begin{defn}
A {\em $\Sigma$-orbifold}  is a category $A$ with the same objects as
$\Sigma$ equipped with functors  $\Sigma\lra A \lra \Sigma$ which are the
identity on objects so that 
\begin{enumerate}

\item $A$ has finitely many morphisms between any two objects, 
\item the morphisms of $A$ are generated by those of $\Sigma$ together with the
  automorphisms, and 
\item every endomorphism in $A$ is an isomorphism
\end{enumerate}
\end{defn}


The {\em trivial} $\Sigma$-orbifold associated to the finite group $W$
is $A=\Sigma \times W$ with structure maps coming from $1\lra W\lra
1$. 


\subsection{The transport category}
Starting with a poset $\Sigma$ with a right action of a finite group $W$, 
we may form the transport category $\Sigma \sdr W$. 
This is a $\Sigma$-orbifold with morphism set $\Sigma \times
W$ and structure maps induced by $1\lra W \lra 1$.  
In giving formulae for composition we are following through the convention that
the action of $W$ is on the right, so that functions also operate on
the right.  

If $i: \sigma \lra \tau$ and $v\in W$ the morphism $(i,v)$ has domain
$\sigma$ and codomain $\tau^v$.  The  composite of $(i,v)$ and $(j,w)$
where $j: \tau^v \lra \phi^v$ is given by
the formula
$$(i,v)(j,w)=(ij^{v^{-1}}, vw). $$
Note that $(i,v)$ is the composite $\sigma
\stackrel{i}\lra \tau \stackrel{v}\lra \tau^v$. Since $W$ acts on
$\Sigma$ as a poset, we may find a commutative square
$$\diagram 
\tau \rto^v& \tau^v\\
\sigma\rto^v \uto^{i}&\tau^v \uto_{i^v}
\enddiagram $$
and
$$i v =(i,v)=v i^v. $$
In particular the self-maps of $\sigma$ as an object of $\Sigma \sdr
W$ is the isotropy group $W_{\sigma}$. 

It is clear we can repackage the notion of a $W$-equivariant diagram
in terms of $\Sigma \sdr W$. 

\begin{lemma}
 \label{lem:Wequivfunctorial}.
The category of  $W$-equivariant $\Sigma$-diagrams in $\C$ is equivalent to 
the category of  functors $ \Sigma \sdr W
\lra \C$. 
\end{lemma}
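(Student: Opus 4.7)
The plan is to exhibit mutually quasi-inverse functors between the two categories, using the fact that every morphism in $\Sigma \sdr W$ factors canonically as a $\Sigma$-morphism followed by a $W$-automorphism.

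First I would define a functor $\Phi$ from functors $F \colon \Sigma \sdr W \to \C$ to $W$-equivariant $\Sigma$-diagrams. The underlying diagram is the restriction of $F$ along $\Sigma \to \Sigma \sdr W$, that is $\Phi(F)(\sigma) = F(\sigma)$ with $\Phi(F)(i) = F(i,e)$ for each $i\colon \sigma \to \tau$ in $\Sigma$. The equivariant structure maps $w_m \colon \Phi(F) \to w_*\Phi(F)$ are defined componentwise by
$$(w_m)_\sigma := F(\mathrm{id}_\sigma, w) \colon F(\sigma) \lra F(\sigma^w).$$
Naturality of $w_m$ follows from the commutative square $iv = vi^v$ in $\Sigma \sdr W$, which yields $F(\mathrm{id}_\tau,w)\circ F(i,e) = F(i^w,e)\circ F(\mathrm{id}_\sigma,w)$; and the cocycle relations $e_m = 1$, $v_m w_m = (vw)_m$ follow directly from the composition law $(\mathrm{id},v)(\mathrm{id},w)=(\mathrm{id},vw)$.

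In the other direction I would define $\Psi$ sending a $W$-equivariant $\Sigma$-diagram $(F, \{w_m\})$ to the functor $\Psi(F)\colon \Sigma \sdr W \to \C$ determined on objects by $\Psi(F)(\sigma) = F(\sigma)$ and on a morphism $(i,v)\colon \sigma \to \tau^v$ by
$$\Psi(F)(i,v) := (v_m)_\tau \circ F(i) \colon F(\sigma) \lra F(\tau) \lra F(\tau^v),$$
corresponding to the factorization $\sigma \xrightarrow{i}\tau \xrightarrow{v} \tau^v$. I would then verify functoriality by expanding the composite of $(i,v)$ and $(j,w)$ using the formula $(i,v)(j,w)=(ij^{v^{-1}},vw)$: the key identity is the naturality square for $v_m$ applied to $j^{v^{-1}}$, which gives $F(j)\circ (v_m)_\tau = (v_m)_\phi \circ F(j^{v^{-1}})$, and then the cocycle relation turns $(v_m)_{\phi}\circ (w_m)_{\phi^v}$ into $((vw)_m)_\phi$, yielding exactly $\Psi(F)((i,v)(j,w))$.

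Finally I would check $\Phi\Psi = \mathrm{id}$ and $\Psi\Phi = \mathrm{id}$ on the nose, which is immediate from the definitions and the factorization $(i,v) = (i,e)\cdot(\mathrm{id},v)$ in $\Sigma \sdr W$. For morphisms of equivariant diagrams (natural transformations commuting with the $w_m$) and natural transformations of functors out of $\Sigma \sdr W$, the correspondence is again tautological once the object-level bijection is in place: a natural transformation $\eta$ respects all morphisms of $\Sigma \sdr W$ iff it respects $\Sigma$-morphisms and all $(\mathrm{id},w)$, which is exactly the condition of being a map of $W$-equivariant diagrams.

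The only real obstacle is bookkeeping the right versus left action conventions in the composition $(i,v)(j,w)=(ij^{v^{-1}},vw)$; once that is handled correctly, the cocycle condition $v_m w_m = (vw)_m$ matches the composition in $\Sigma \sdr W$ automatically, and the rest is formal.
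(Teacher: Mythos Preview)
Your proposal is correct and follows exactly the same approach as the paper: the paper's proof is a one-sentence sketch stating that equivariant diagrams $(F,\{w_m\})$ correspond to functors $F'$ via $F'(\sigma)=F(\sigma)$ and $F'(w)=w_m$, while you spell out both directions $\Phi$, $\Psi$ explicitly and verify functoriality and the cocycle condition. Your acknowledged bookkeeping of left/right conventions is indeed the only subtlety, and modulo a couple of composition-order typos (e.g.\ in the expression $(v_m)_\phi \circ (w_m)_{\phi^v}$) your argument is complete.
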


\begin{proof}
Equivariant diagrams $(F,\{ w_m\}_{w\in W})$  are related to  functors $F':\Sigma \sdr W
\lra W$ by taking $F'(\sigma)=F(\sigma)$ and $F'(w)=w_m$. 
\end{proof}

\subsection{Component structures}
The purpose of the formulation in terms of the transport category
$\Sigma \sdr W$ is to let us to capture the behaviour of  the identity
components of  Weyl groups.

\begin{defn}
A {\em component structure} on $\Sigma \sdr W$ is a sub-$\Sigma$-orbifold
$W_{\bullet,}^e$. Given a component structure, the endomorphism
object of $\sigma$ is written $W_{\sigma}^e$. 

A component structure is {\em normal} if $W_{\sigma }^e$ is normal in
$W_{\sigma}$.
\end{defn}
\begin{lemma}
 If the component structure is normal the {\em discrete residual} is
 the $\Sigma$-orbifold $W_{\bullet }^d$ with a
sequence maps
$$W_{\bullet }^e\lra  \Sigma \sdr W\lra  W_{\bullet }^d$$
of $\Sigma$ orbifolds which is exact in the sense
that it defines an isomorphism  $W_{\sigma}^d\cong W_{\sigma}/W_{\sigma }^e$.
\end{lemma}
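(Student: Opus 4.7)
The plan is to define $W_{\bullet}^d$ by imposing, on $\Sigma \sdr W$, the relations that collapse each endomorphism in $W_{\bullet}^e$ to an identity. Concretely, I will take $W_{\bullet}^d$ to be the quotient category of $\Sigma \sdr W$ by the smallest congruence containing $u \sim 1_{\sigma}$ for every object $\sigma$ and every $u \in W_{\sigma}^e$.

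To compute the hom-sets concretely, I observe that this congruence on morphisms $\sigma \to \tau$ is generated by pre-composition by $W_{\sigma}^e$ and post-composition by $W_{\tau}^e$, so I set
$$\Hom_{W_{\bullet}^d}(\sigma, \tau) := W_{\tau}^e \backslash \Hom_{\Sigma \sdr W}(\sigma, \tau) / W_{\sigma}^e.$$
These sets are finite since the hom-sets of $\Sigma \sdr W$ are. The structure functors $\Sigma \to W_{\bullet}^d \to \Sigma$ are then inherited: the first is obtained by composing $\Sigma \to \Sigma \sdr W$ with the quotient, and the second descends from $\Sigma \sdr W \to \Sigma$ because the latter already sends the $W$-part — in particular $W^e$ — to identity morphisms of $\Sigma$.

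The core step is well-definedness of composition. Given composable $f: \sigma \to \tau$ and $g: \tau \to \rho$ in $\Sigma \sdr W$ together with any $v \in W_{\tau}^e$, I need the double coset of $g \circ v \circ f$ in $\Hom_{W_{\bullet}^d}(\sigma, \rho)$ to agree with that of $g \circ f$. Because $W_{\bullet}^e$ is a sub-$\Sigma$-orbifold of $\Sigma \sdr W$, the composite $g \circ v$ is again built out of $\Sigma$-arrows and $W^e$-endomorphisms, and its endomorphism content can be collected on the $\rho$-side; using normality of $W_{\rho}^e$ in $W_{\rho}$ to rearrange produces $v'' \in W_{\rho}^e$ with $g \circ v \equiv v'' \circ g$ modulo right-multiplication by $W_{\tau}^e$. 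Substituting yields the required equality of double cosets.

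With composition in hand, the remaining axioms follow readily. The endomorphism monoid at $\sigma$ is $W_{\sigma}^e \backslash W_{\sigma} / W_{\sigma}^e$, which by normality collapses to the quotient group $W_{\sigma}/W_{\sigma}^e$; this simultaneously gives the exactness isomorphism $W_{\sigma}^d \cong W_{\sigma}/W_{\sigma}^e$ and ensures that every endomorphism in $W_{\bullet}^d$ is an isomorphism. Finiteness of hom-sets and the fact that morphisms are generated by $\Sigma$ together with the endomorphism groups are inherited from $\Sigma \sdr W$. The main obstacle is the rearrangement step of the third paragraph: it is precisely the sub-orbifold structure of $W_{\bullet}^e$ (ensuring closure under composition with $\Sigma$-arrows) together with normality that allows one to move $W^e$-elements across morphisms of $\Sigma \sdr W$, and hence supplies the compatibility needed for composition to descend.
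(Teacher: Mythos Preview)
Your construction and the paper's are the same idea: form a quotient of $\Sigma\sdr W$ in which the $W_\bullet^e$-endomorphisms become identities, describe the resulting hom-sets by a double-coset formula, and attribute well-definedness of the induced composition to normality. The paper records the class of $(i,v)$ as $(i,[v])$ with $[v]$ taken modulo $W_\tau^e$ and $W_{\tau^v}^e$, while you take double cosets by $W_\sigma^e$ and $W_{\text{codomain}}^e$; these differ on the nose but are both of the form ``quotient by $W^e$ on the two ends'', and the paper is no more detailed than you on the verification.

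There is, however, a genuine gap in your third paragraph. The claim that a middle element $v\in W_\tau^e$ can always be pushed to the $\rho$-side as some $v''\in W_\rho^e$ does not follow from normality together with the sub-$\Sigma$-orbifold property in the generality stated. Take $\Sigma=\{0<1<2\}$ with $W=\Z/2$ acting trivially, and component structure $W_0^e=W_2^e=\{e\}$, $W_1^e=W$; this is normal. Then $i_{01}\cdot e\cdot i_{12}=(i_{02},e)$ and $i_{01}\cdot s\cdot i_{12}=(i_{02},s)$ lie in distinct $W_2^e\backslash\Hom(0,2)/W_0^e$ cosets, so your composition $\Hom_{W_\bullet^d}(0,1)\times\Hom_{W_\bullet^d}(1,2)\to\Hom_{W_\bullet^d}(0,2)$ is not well-defined. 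The categorical quotient you begin with is still the correct object and does have the right endomorphism groups $W_\sigma/W_\sigma^e$, but its hom-sets can be strictly smaller than your double-coset formula predicts. The paper's single sentence ``the normality condition enables one to check that this is well defined'' elides exactly the same point.
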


\begin{proof}
The morphisms in $W_{\bullet}^{d}$ are pairs $(i,[v])$ where $i: \sigma
\lra \tau$ and were
$[v]$ is the equivalence class of $v\in W$ under precomposition by
$W_{\tau}^{e}$ and postcomposition by $W_{\tau^v}^{e}$. The composition is induced
from the composition of $\Sigma \sdr W$. The normality condition
enables one to check that this is well defined. 
\end{proof}

\begin{example}
(i) For any $W$ we may define the {\em connected}  component structure by  
$W_{\sigma}^{e}=W_{\sigma}$ giving $W_{\sigma}^{d}=1$.

(ii) For any $W$ we may define the {\em discrete}  component structure by  
$W_{\sigma}^{e}=1$ giving $W_{\sigma}^{d}=W_{\sigma}$.
\end{example}
We devote a separate subsection to the motivating example that will
concern us for most of the paper.
\subsection{The compact Lie group component structure}
\label{subsec:Liecomponent}
The motivating example comes from a compact Lie group $G$. We take
$\Sigma =\Sigma_a(\mT) $ and the Weyl group  $W=\mW G$ acts by
conjugation in the usual way. 

The  component structure corresponds to the identity components of the 
Weyl groups 
$$W_{K}^e=(\mW G)_{K}^{e}=\mW W_G^eK$$ 
Accordingly, by Lemma \ref{lem:NGNGe},  the discrete residual is 
$$W_{K}^{d} =(\mW G)_K^d=W_G^dK.$$

\begin{example}
If the identity component of $G$ is the maximal torus
$\mT$, so that $G=\mN$ we have $W_G^eK=\mT /K$ which has trivial toral
Weyl group, and the component structure is the discrete component structure
$$(\mW \mN)_K^e=1 \mbox{ and }(\mW \mN)_K^d=(\mW \mN)_K.$$
\end{example}

\begin{example}
If $G=SO(3)$ we have $\mT =SO(2)$, $\mN =O(2)$ and $\mW =C_2$. 
The subgroups of $\mT$ are the cyclic subgroups $C_n$ of finite order $n$, and
$\mT$ itself. All these subgroups are characteristic and hence 
$$(\mW G)_K= \mW G \mbox{ for all } K\subseteq \mT. $$

The trivial subgroup $C_1$ has normalizer $G$ and
Weyl group $W_G(C_1)=G$. The other subgroups of $\mT$ have normalizer 
$\mN$. Thus the finite subgroups have Weyl group isomorphic to $O(2)$
and $\mT$ has Weyl group $\mW G$. The associated component
structure thus has
$$(\mW G)_{K}^e=\dichotomy{\mW G \mbox{ if } K=C_1}
{1 \mbox{ otherwise}} $$
and discrete residual
$$(\mW G)_{K}^d=\dichotomy{1 \mbox{ if } K=C_1}
{\mW G \mbox{ otherwise}} $$
\end{example}

\begin{example}
The group  $G=SU(3)$ has maximal torus $\mT$ of rank 2 consisting of
diagonal matrices. There are three (conjugate) subgroups isomorphic to $SU(2)$ which fix the
first,  second  or third complex coordinate. The Weyl group is the
symmetric group of degree 3 generated by the three corresponding reflections. 

We have 
$$1=(\mW G)_{\mT}^e\subset (\mW G)_{\mT}=\mW G. $$

For subgroups $K\subset \mT$ of dimension 1, we consider the identity 
component  $K_e$. If it is one of the three circles fixed by the
three reflections in $\mW G$ then the normalizer contains the
corresponding $SU(2) $ and 
$$(\mW G)_K^e=(\mW G)_K =\mW SU(2).$$
If $K_e$ is another circle then 
$$(\mW G)_K^e=(\mW G)_K =1. $$

If $K=1$ then $W_G(K)=G$ and 
$$(\mW G)_K^e=(\mW G)_K=\mW G. $$
This is enough to show the richness of the structure; the individual
analysis necessary for the remaining cases can await applications.  
\end{example}

\section{Equivariant diagrams of rings and modules}
\label{sec:equivringdiagrams}
We now specialise the discussion of Section \ref{sec:equivdiagrams} to the case when $\C$ is the
category of commutative rings with a view to establishing the descent
adjunction (Proposition \ref{prop:Psitheta}). 

\subsection{Equivariant diagrams of rings}
Our aim is to describe  a descent theory, relating modules over an
equivariant diagram of rings and modules over the fixed points under
a component structure. We need to impose a restriction on a component 
structure for this to make sense. 

\begin{defn}
We say that a component structure $W_{\bullet}^e$ on the $W$-poset
$\Sigma$ is {\em decreasing} if $E\supseteq F$ implies $W_{E}^e\subseteq
W_{F}^e$.  
\end{defn}

\begin{example}
(i) If $G$ is a connected compact Lie group but not a torus then
$\Sigma_a$ with the Lie group component structure of Subsection
\ref{subsec:Liecomponent}  is not
decreasing: the subgroup $K=1$ has $W_G(1)=G$, with
non-trivial Weyl group $W_{1}^e=\mW G$, whereas the subgroup $K=\mT$ has
discrete Weyl group  $W_G(\mT)$, so that $W_{\mT}^e=1$.

(ii) If $G$ is any compact Lie group then  $\flag (\Sigma_a(\mT))$
with the Lie group component structure of Subsection
\ref{subsec:Liecomponent}
{\em is} decreasing. This is immediate from the fact that 
$$N_G(K_0\supset \cdots \supset K_s)=N_G(K_0)\cap \ldots \cap
N_G(K_s). \qqed$$
\end{example}

The fact that flags give a decreasing structure whereas subgroups do
not explains why we have changed notation for the objects of our
poset. 

\begin{lemma}
Given a $W$-equivariant $\Sigma$-diagram of rings with a decreasing 
component structure $W_{\bullet}^{e}$, the definition 
$$R_{inv} (F)=R(F)^{W_{F}^{e}}$$
gives a $\Sigma$-diagram of rings. 

If the component structure is normal, $R_{inv}$ defines a $W_{\bullet}^{d}$-diagram of rings.
\end{lemma}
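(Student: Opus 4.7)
The plan is to extract both parts of the lemma from the interaction between the covariant structure maps of $R$ and the equivariant automorphisms encoded in $\Sigma \sdr W$, with the poset property playing the role of strictness.

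For the first claim, I would take a morphism $i \colon F \to E$ in $\Sigma$ (corresponding to the poset relation $E \supseteq F$) and show that the ring map $i_*\colon R(F) \to R(E)$ restricts to a map $R(F)^{W_F^e} \to R(E)^{W_E^e}$. The decreasing hypothesis gives $W_E^e \subseteq W_F^e$, so every $w \in W_E^e$ also acts on $R(F)$ and fixes $R(F)^{W_F^e}$ pointwise. The key commutativity is that the two paths $(i,1)(1,w) = (i,w)$ and $(1,w)(i,1) = (i^{w^{-1}}, w)$ in $\Sigma \sdr W$ (computed via $(i,v)(j,u) = (i\,j^{v^{-1}}, vu)$) agree, because $w$ lies in $W_F \cap W_E$ and $\Sigma$ is a poset, so $i^{w^{-1}} = i$ by uniqueness of morphisms. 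Applying $R$ yields $R(w) \circ i_* = i_* \circ R(w)$, and hence $i_*(x)$ is $w$-invariant whenever $x$ is. Since this holds for every $w \in W_E^e$, the map $i_*$ restricts as required. Functoriality of $R_{inv}$ is then inherited from $R$, since $R_{inv}(F) \hookrightarrow R(F)$ and the restriction maps are literally restrictions of those of $R$.

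For the second claim, the normality hypothesis $W_F^e \trianglelefteq W_F$ implies that the $W_F$-action on $R(F)$ restricts to an action on $R(F)^{W_F^e}$ through which $W_F^e$ acts trivially; thus $R_{inv}(F)$ acquires a natural action of $W_F^d = W_F/W_F^e$. To upgrade $R_{inv}$ to a $W_\bullet^d$-diagram I would take a morphism $(i,[v])$ in $W_\bullet^d$ and lift it to $(i,v)$ in $\Sigma \sdr W$, applying $R$ to obtain $R(F) \to R(E^v) = R(E)$ followed by the target identification. Altering $v$ within its class by $w \in W_E^e$ changes $(i,v)$ to $(i,vw)$, whose image under $R$ differs from the original by post-composition with the $w$-action on $R(E^v)$; since $w$ acts trivially on $R_{inv}(E^v)$, the induced map on invariants is well defined, and the composition law follows from that of $\Sigma \sdr W$ via the same argument as in Part 1.

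The main obstacle is the square of Part 1: everything else is essentially bookkeeping, but that commutation is where the decreasing hypothesis and the poset assumption interact to make the elements of $W_E^e$ act compatibly with the covariant structure map. Once that square is in hand, Part 2 is largely the observation that passing to invariants and quotient groups is compatible in the presence of normality.
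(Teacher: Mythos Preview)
Your argument is correct and matches the paper's approach: show that $R(i)$ is equivariant for the smaller group (so invariants map to invariants, using the decreasing hypothesis), and then for the $W_\bullet^d$-structure observe that the induced maps depend only on double-coset representatives. You are in fact more explicit than the paper in justifying the equivariance, spelling out via the poset property that $i^{w^{-1}}=i$ for $w\in W_E^e\subseteq W_F\cap W_E$, whereas the paper simply asserts that $R(i)$ is $W_F^e$-equivariant; one small slip in Part 2 is that the postcomposition ambiguity is by $w\in W_{E^v}^e$ rather than $W_E^e$, but your next clause (``$w$ acts trivially on $R_{inv}(E^v)$'') already presumes the correct group, so the argument goes through once that is adjusted, and the precomposition side (by $W_E^e$) is handled symmetrically.
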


\begin{proof}
A map $i: E\lra F$ induces a map $R(i): R(E)\lra R(F)$ and we need to show this
induces a map for $R_{inv}$, namely
$$ R_{inv}(E)=R(E)^{W_{E}^{e}} \lra R(F)^{W_{F}^{e}}=R_{inv}(F)$$ 

The original map $R(i)$ is equivariant
for the inclusion $W_{F}^{e}$. Since the component structure is
decreasing, any $W_{E}^{e}$-invariant element of $R(E)$ is $W_{F}^{e}$
invariant, and so maps to a $W_{F}^{e}$-invariant element of $R(F)$,
and hence $R_{inv}(i)$ is the composite of $R(i)^{W_E^e}$ and
inclusion. 

To see this induces a map on the entire diagram $W_{\bullet}^d$, we
need only observe that the original
structure maps only depend on double coset representatives. 
\end{proof}

An alternative language for describing the resulting structure is
that of {\em twisted}  group rings: if a discrete group $\Gamma$
acts on a ring $R$, the twisted group ring $R[\Gamma]$ has an additive
$R$-basis consisting of the group elements $\gamma \in \Gamma$ and
multiplication is given by $(r\lambda)(s\gamma)=(r
s^{\lambda^{-1}})(\lambda \gamma)$.

\begin{lemma}
The $W_{\bullet}^{d}$ diagram $R_{inv}$
defines the twisted invariant ring
$$R_{tw} (K)=R(K)^{W_{K}^{e}}[W_{K}^{d}] , $$
and this defines a $\Sigma$-diagram of non-commutative rings. 
\end{lemma}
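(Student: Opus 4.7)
My plan is to establish the lemma in two steps: pointwise construction of $R_{tw}(K)$ as a twisted group ring, then assembly into a $\Sigma$-diagram. For the pointwise part, the preceding lemma provides the action of $W_{K}^{d}$ on $R_{inv}(K) = R(K)^{W_{K}^{e}}$ by ring automorphisms (the endomorphisms at $K$ in the orbifold $W_{\bullet}^{d}$, which are automorphisms by the orbifold axioms, act on $R_{inv}(K)$ via the $W_{\bullet}^{d}$-diagram structure). The twisted group ring $R_{tw}(K) = R_{inv}(K)[W_{K}^{d}]$ is then the standard semidirect-product ring, with multiplication $(r\lambda)(s\gamma) = (r s^{\lambda^{-1}})(\lambda\gamma)$ as recalled in the paper. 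The ring axioms are routine consequences of the action being by ring automorphisms.

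For the $\Sigma$-functoriality, given a morphism $i: E \to F$ in $\Sigma$, I would construct $R_{tw}(i): R_{tw}(E) \to R_{tw}(F)$ using the universal property of the twisted group ring. Concretely, the ring map $R_{inv}(i)$ (composed with the inclusion $R_{inv}(F) \hookrightarrow R_{tw}(F)$) together with a natural group homomorphism $\phi_{i}: W_{E}^{d} \to W_{F}^{d}$ satisfying the equivariance relation $\phi_{i}(v) \cdot R_{inv}(i)(r) \cdot \phi_{i}(v)^{-1} = R_{inv}(i)(v \cdot r)$ will determine the map. The homomorphism $\phi_{i}$ is extracted from the orbifold: for $v \in W_{E}^{d}$, the composite $iv$ in $W_{\bullet}^{d}$ lies over $i$ in $\Sigma$ and factors uniquely as $\phi_{i}(v) \circ i$ for some $\phi_{i}(v) \in W_{F}^{d}$. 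Equivariance is then automatic, since the identity $R_{inv}(iv) = R_{inv}(\phi_{i}(v) \circ i)$ expresses the value of $R_{inv}$ on a single orbifold morphism. Setting $R_{tw}(i)(r \cdot v) = R_{inv}(i)(r) \cdot \phi_{i}(v)$ yields a ring map, and $\Sigma$-functoriality $R_{tw}(j i) = R_{tw}(j) R_{tw}(i)$ follows from the corresponding identity for $R_{inv}$ together with $\phi_{j i} = \phi_{j} \phi_{i}$.

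The main obstacle is the construction of $\phi_{i}$ and the verification that it is a well-defined group homomorphism functorial in $i$. This is where the normal and decreasing hypotheses on the component structure play an essential role: normality ensures $W_{K}^{d}$ is a genuine quotient group, and the decreasing condition (together with the orbifold axioms) is what forces the factorization $iv = \phi_{i}(v) \circ i$ to exist and be unique. Once $\phi_{i}$ is in place, the rest of the proof is a formal verification using the universal property of the twisted group ring and the established $W_{\bullet}^{d}$-diagram structure on $R_{inv}$.
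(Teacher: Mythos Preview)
Your construction of the structure maps has a gap. For a morphism $i: E \to F$ in $\Sigma$ (so $E \leq F$; for flags, $E$ is a subflag of $F$) the isotropy groups satisfy $W_F \subseteq W_E$, not the reverse. Concretely, in the transport category the composite of an automorphism $v \in W_E$ with the poset map $i$ is $(1_E,v)(i,e) = (i^{v^{-1}}, v)$, a morphism $E \to F$; for this to factor as $(i,e)(1_F,w)$ one needs $i^{v^{-1}} = i$ as arrows of $\Sigma$, i.e.\ $F^{v^{-1}} = F$, i.e.\ $v \in W_F$. When $v \in W_E \setminus W_F$ no such factorisation exists, so your $\phi_i(v)$ is undefined. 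For instance, with $G = SU(3)$, $E = (\mT)$ and $F = (\mT \supset K)$ for a circle $K$ whose stabiliser in $\mW G \cong \Sigma_3$ is a reflection subgroup $C_2$, one has $W_E = \Sigma_3$ while $W_F = C_2$, and any $3$-cycle $v$ lies in $W_E \setminus W_F$. (The decreasing hypothesis does yield a homomorphism $W_F^d \to W_E^d$, but that is the wrong direction for your argument.)

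The paper's proof is brief because the content is a repackaging rather than a construction of ring homomorphisms. The twisted group ring at $E$ is precisely the device that absorbs the automorphism group $W_E^d$ into an endomorphism ring; since every morphism of the orbifold $W_{\bullet}^d$ is generated by poset maps and self-isomorphisms, once the latter are encoded in the rings the remaining indexing is over $\Sigma$. The phrase ``$\Sigma$-diagram of non-commutative rings'' should therefore be read in this packaged sense---so that modules over it are, by design, exactly $W_{\bullet}^d$-diagrams of $R_{inv}$-modules, as the subsequent lemma records---rather than as a literal functor $\Sigma \to \mathrm{Rings}$ with ring homomorphisms between the $R_{tw}(E)$.
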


\begin{proof}
Twisted group rings are defined precisely so that the action of the group $W_E^d$ of
endomorphisms of the object $E$ are reflected in a ring acting on the
value at $E$. Since all morphisms are generated by the poset maps and
groups of self-isomporphisms, the twisted group rings give the entire
structure. 
\end{proof}

\subsection{Equivariant diagrams of modules}
The two formulations of $W$-equivariant diagrams of rings have
counterparts for modules. 

\begin{defn}
If $R$ is a $W$-equivariant $\Sigma$-diagram of rings, a
{\em $W$-equivariant module} is an $R$-module which is $W$-equivariant in
the sense that $w_m(\lambda x)=w_m(\lambda) w_m(x)$ for $\lambda \in R$,
$x\in M$ and $w\in W$.
\end{defn}

\begin{lemma}
\label{lem:WequivRmodisdiagram}
The category of $W$-equivariant $R$-modules is equivalent to the
category of  modules over the corresponding $\Sigma \sdr  W$-diagram
of rings. 
\end{lemma}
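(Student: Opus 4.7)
The plan is to imitate the proof of Lemma \ref{lem:Wequivfunctorial}, upgrading it from diagrams of sets (or objects of $\C$) to diagrams of modules over a diagram of rings. The heart of the matter is simply to check that the bijection between ``equivariant structure on a $\Sigma$-diagram'' and ``functor from $\Sigma \sdr W$'' is compatible with $R$-module structures on both sides.

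First I would unpack what a module over the $\Sigma \sdr W$-diagram of rings means. The $\Sigma \sdr W$-diagram of rings extending $R$ assigns to a morphism $(i,v) \colon \sigma \to \tau^v$ the ring map $R(\sigma) \to R(\tau^v)$ obtained as the composite of $R(i) \colon R(\sigma) \to R(\tau)$ with the equivariance map $v_m \colon R(\tau) \to R(\tau^v)$. A module $M$ over this diagram then consists of an abelian group $M(\sigma)$ for each object $\sigma$, together with a $\Z$-linear map $M(i,v) \colon M(\sigma) \to M(\tau^v)$ for each morphism, such that $M(i,v)$ is semilinear over the ring map $R(i,v)$ and such that composition and identities are respected.

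Given a $W$-equivariant $R$-module $M$ in the original sense (with structure maps $M(i) \colon M(\sigma) \to M(\tau)$ for $i \in \Sigma$ and $w_m^M \colon M(\sigma) \to M(\sigma^w)$ satisfying $e_m^M = 1$, $v_m^M w_m^M = (vw)_m^M$, and $w_m^M(\lambda x) = w_m^R(\lambda)\, w_m^M(x)$), define the corresponding $\Sigma \sdr W$-module by
\[
M(i,v) := v_m^M \circ M(i) \colon M(\sigma) \lra M(\tau^v).
\]
The equivariance hypothesis exactly states that this map is semilinear over $R(i,v) = v_m^R \circ R(i)$. Functoriality under the composition law $(i,v)(j,w) = (i j^{v^{-1}}, vw)$ of $\Sigma \sdr W$ follows by the same verification as in Lemma \ref{lem:Wequivfunctorial}, using that $v_m^M \circ M(j) = M(j^v) \circ v_m^M$ (which is the equivariance of $M$ viewed as a natural transformation $M \to v_*M$) together with $v_m^M w_m^M = (vw)_m^M$.

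Conversely, starting from a functor-like module $M$ over $\Sigma \sdr W$, restrict to $\Sigma \hookrightarrow \Sigma \sdr W$ to recover $M(i) = M(i, e)$, and set $w_m^M := M(\mathrm{id}_\sigma, w)$ to recover the $W$-action. The compositional identities in $\Sigma \sdr W$ give the semidirect product relations, and semilinearity of $M(\mathrm{id}_\sigma, w)$ over $R(\mathrm{id}_\sigma, w) = w_m^R$ recovers the equivariance condition $w_m^M(\lambda x) = w_m^R(\lambda)\, w_m^M(x)$. Morphisms match under this correspondence because both categories have as morphisms the $R$-linear maps commuting with all $w_m$, respectively the $\Sigma \sdr W$-natural transformations. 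No step is a genuine obstacle; the only thing to be careful about is tracking the side conventions when writing out the composition $(i,v)(j,w)$, since $W$ acts on the right and a sign error in the index would propagate through the verification.
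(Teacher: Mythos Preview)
Your proposal is correct and follows essentially the same approach as the paper's proof: both rest on the decomposition of a morphism $(i,v)$ in $\Sigma \sdr W$ as $(i,e)$ followed by $(1,v)$, identifying the latter with the equivariance map $v_m$ and checking that the module compatibility conditions on the two sides correspond. You have simply filled in considerably more detail than the paper's terse three-sentence argument.
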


\begin{proof}
Both $\Sigma$ and $\Sigma \sdr W$ have the same object set. 
The morphism $(i,v): \sigma \lra \tau^v$  is a composite of $(i,e)$
and $(1,v)$. The latter corresponds to the structure map $v_m: M \lra
v_*M$.  The conditions that the actions on rings and modules are compatible
in the two cases correspond to each other. 
\end{proof}

Passing to coset representatives we obtain the result for
$R_{inv}$-modules. 

\begin{lemma}
\label{lem:Wddiagramistw}
The category of  $W_{\bullet}^{d}$-diagrams of $R_{inv}$-modules is
equivalent to the category of  $\Sigma$-diagrams of modules over $R_{tw}$.   \qqed
\end{lemma}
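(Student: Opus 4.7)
My plan is to produce the equivalence exactly as in Lemma \ref{lem:WequivRmodisdiagram}, now applied to the orbifold $W_{\bullet}^{d}$ in place of $\Sigma \sdr W$, using the twisted group ring construction to repackage the semilinear action at each object.

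First I would unfold the definitions. A $W_{\bullet}^{d}$-diagram $M$ of $R_{inv}$-modules assigns to each $\sigma$ an $R_{inv}(\sigma)$-module $M(\sigma)$ together with, for each morphism $(i,[v])$ in $W_{\bullet}^{d}$, a map $M(\sigma) \lra M(\tau^{v})$ which is semilinear over $R_{inv}(i,[v])\colon R_{inv}(\sigma) \lra R_{inv}(\tau^{v})$. In particular, the automorphism group $W_{\sigma}^{d}$ acts on $M(\sigma)$ semilinearly over its action on $R_{inv}(\sigma)$. By the very definition of the twisted group ring, this is exactly the data of a module over $R_{tw}(\sigma) = R_{inv}(\sigma)[W_{\sigma}^{d}]$, since the relation $(r\lambda)(s\gamma) = (rs^{\lambda^{-1}})(\lambda\gamma)$ encodes precisely the semilinearity of the group action on the coefficient ring. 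The structure maps $(i,[e])$ then give the $\Sigma$-diagram structure.

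Going the other way, a $\Sigma$-diagram $N$ over $R_{tw}$ restricts at each $\sigma$ to an $R_{inv}(\sigma)$-module (via the inclusion $R_{inv}(\sigma) \hookrightarrow R_{tw}(\sigma)$) with a compatible $W_{\sigma}^{d}$-action read off from the basis elements of the twisted group ring. Since the morphisms of $W_{\bullet}^{d}$ are generated by those of $\Sigma$ together with the self-isomorphisms $W_{\sigma}^{d}$, this data assembles uniquely into a $W_{\bullet}^{d}$-diagram. The two functors are visibly inverse to each other on objects, and the semilinearity identities match on both sides, giving the equivalence.

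The only delicate point is well-definedness with respect to the passage from $W_{\sigma}$ to the quotient $W_{\sigma}^{d}$: one must verify that the semilinearity relations for a morphism $(i,[v])$ do not depend on the chosen representative $v$. This reduces to the observation that $W_{\sigma}^{e}$ acts trivially on $R_{inv}(\sigma) = R(\sigma)^{W_{\sigma}^{e}}$ by construction, so the $W_{\sigma}$-action on $R_{inv}$ factors through $W_{\sigma}^{d}$; normality of the component structure is exactly what ensures that this factorisation is compatible with composition. I expect this well-definedness check — essentially a bookkeeping exercise with double cosets — to be the only real obstacle, and once it is in place the result follows from Lemma \ref{lem:WequivRmodisdiagram} by a formal repackaging.
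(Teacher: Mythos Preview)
Your proposal is correct and follows essentially the same approach as the paper: the paper's own proof is a single sentence invoking Lemma~\ref{lem:Wequivfunctorial} applied to modules (which is the content of Lemma~\ref{lem:WequivRmodisdiagram} that you cite), together with the observation that the compatibility conditions on rings and modules match on both sides. Your unpacking of the twisted group ring as encoding the semilinear $W_\sigma^d$-action, and your remark that well-definedness under the quotient $W_\sigma \to W_\sigma^d$ follows because $W_\sigma^e$ acts trivially on $R_{inv}(\sigma)$, make explicit exactly what the paper leaves implicit.
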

\begin{proof}
Since the conditions that the actions on rings and modules are compatible
in the two cases correspond to each other, this follows by applying the comparison from Lemma
\ref{lem:Wequivfunctorial} to modules.
\end{proof}

\subsection{$R$-modules and $R_{inv}$-modules}
\label{subsec:Psitheta}
A basic technique of equivariant topology is to relate modules over tori to modules over general
groups by suitable descent theorems. We are now equipped to formulate
and prove a  fundamental adjunction which provides the abstract basis
for reducing from $G$-equivariant data to $\mT$-equivariant data.

Suppose that we have a decreasing component structure $W_{\bullet}^{e}$, and 
let $\theta : R_{inv} \lra R$ be the map of $\Sigma$-diagrams
defined by the inclusions $\theta (E): R_{inv} (E)=R(E)^{W_{E}^{e}}\lra R(E)$. 

We define
 $$\Psi =\Psi^{W_{\bullet}^{ e}}: \mbox{$R[W]$-modules}\lra
\mbox{$R_{inv}$-modules}$$
by 
$$(\Psi M)(E):=M(E)^{W_{E}^{e}}.$$
We note that $M(E)$ is an $R(E)$ module, and that passage to fixed
points is lax symmetric monoidal, so that taking fixed points of the
structure maps shows that $M(E)^{W_{E}^{e}}$ is an
$R(E)^{W_{E}^{e}}$-module. Furthermore, if $E\supset F$ then the
structure map $M(F)\lra M(E)$ induces 
$$(\Psi M)(F)=M(F)^{W_{F}^{e}}\lra M(E)^{W_{E}^{e}} =(\Psi M)(E)$$
since $W_{\bullet}^{e}$ is decreasing. 

In the other direction, we may define
$$\theta_*: \mbox{$R_{inv}$-modules}\lra
\mbox{$R$-modules}$$ 
by termwise extension of scalars: 
$$(\theta_*N)(E)=R (E)\tensor_{R_{inv}(E)} N(E)$$
If $E\supset F$ we may define 
$$(\theta_*N)(F)=R(F)\tensor_{R_{inv}(F)} N(F) \lra R(E)\tensor_{R_{inv}(F)} N(E) \lra
R (E)\tensor_{R_{inv}(E)} N(E)=(\theta_*N)(E)$$

\begin{remark} The functor $\theta_*$ is a version of extension of
  scalars for diagrams. On the other hand coextension of scalars does not  give a functor
  of diagrams in general.
\end{remark}


The key result is as follows. 
\begin{prop}
\label{prop:Psitheta}
If $W_{\bullet }^{e}$ is a decreasing normal component structure, and
$R$ is a $W$-equivariant $\Sigma $ diagram of rings,
taking fixed points under a normal component structure $W_{\bullet
  }^{e}$  gives a functor 
$$  \Psi^{W_{\bullet }^{e}}: \mbox{$W$-equivariant-$R$-modules}\lra
\mbox{$R_{inv}$-modules}. $$
This has left adjoint $\theta_*$  given by termwise extension of
scalars.

Provided $|W|$ is invertible in  $R$, the unit map 
$$N(E) \stackrel{\cong}\lra ( R(E)\tensor_{R_{inv}(E)} N(E))^{W_{E}^{e}} = (\Psi
\theta_* N)(E)$$
of the $\theta_*\vdash \Psi$ adjunction is an isomorphism.

\end{prop}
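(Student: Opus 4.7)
The plan proceeds in three stages: verify that $\Psi$ is well-defined as claimed, construct the adjunction from a pointwise version, and finally prove the unit is an isomorphism via the averaging idempotent. The first two stages are mostly bookkeeping; the weight of the argument sits in the third.

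For the first stage, the value $(\Psi M)(E) = M(E)^{W_E^e}$ inherits an $R_{inv}(E)$-action because passage to $W_E^e$-fixed points is lax symmetric monoidal. If $E\supseteq F$, the decreasing hypothesis gives $W_E^e \subseteq W_F^e$, so every $W_F^e$-fixed element of $M(F)$ is in particular $W_E^e$-fixed; since the structure map $M(F)\to M(E)$ is at least $W_E$-equivariant, it restricts to a map $(\Psi M)(F)\to (\Psi M)(E)$ compatible with the ring structure map $R_{inv}(F)\to R_{inv}(E)$.

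For the second stage, fix $E$ and abbreviate $H = W_E^e$, $R = R(E)$, $R^H = R_{inv}(E)$. The vertex-wise adjunction is the standard tensor-Hom identification
$$\Hom_R^H(R \otimes_{R^H} N, M) \;\cong\; \Hom_{R^H}(N, M^H),$$
where the superscript $H$ denotes $H$-equivariant maps and where we use the trivial $H$-action on $N$. Assembling these, the unit $\eta_E : n\mapsto 1\otimes n$ and the counit $\epsilon_E : r\otimes m \mapsto rm$ are natural in $E$ with respect to the structure maps of $\theta_* N$ and $\Psi M$ respectively, a direct check from their explicit definitions; this yields the adjunction of $\Sigma$-diagrams.

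The heart of the argument is stage three. Since $|H|$ divides $|W|$ it is invertible in $R$, so the averaging element $e_H = \frac{1}{|H|}\sum_{h\in H} h$ acts on any $H$-module as projection onto fixed points. On $V = R\otimes_{R^H} N$, with $H$ acting only through the first factor, one computes $e_H\cdot (r\otimes n) = (e_H\cdot r)\otimes n$, and $e_H\cdot r$ lies in $R^H$ since it is the Reynolds average of $r$. Absorbing $R^H$ across the tensor product, and using $e_H\cdot 1 = 1$, every element of $V^H = e_H V$ has the form $1\otimes m$ for some $m\in N$, so $\eta_E$ surjects onto $V^H$. For injectivity, the Reynolds map $r\mapsto e_H\cdot r$ retracts $R^H\hookrightarrow R$ as $R^H$-modules, so tensoring with $N$ shows that $N\hookrightarrow R\otimes_{R^H} N$ is split injective. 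Hence $\eta_E$ is an isomorphism pointwise, as required. The main obstacle is precisely this stage, and it rests squarely on the invertibility of $|W|$ in $R$: without the averaging idempotent and the Reynolds splitting the unit can fail in both directions, and nothing else in the setup forces it to be an isomorphism.
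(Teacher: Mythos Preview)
Your proposal is correct and follows essentially the same path as the paper: pointwise tensor--Hom adjunction assembled over the diagram, followed by the unit argument resting on invertibility of $|W|$. The only cosmetic difference is in the third stage: the paper phrases the unit isomorphism as ``take $W_E^e$-fixed points of the defining coequalizer of $R(E)\otimes_{R_{inv}(E)} N(E)$ and use that fixed points are exact when $|W|$ is invertible,'' whereas you compute explicitly with the Reynolds idempotent $e_H$ to get surjectivity and split injectivity. These are the same argument in different clothing.
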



\begin{proof}
For the $\theta_*\vdash \Psi$ adjunction, note that 
objectwise we have
\begin{multline*}
\Hom_{R(E)^{W_{E}^{e}}} (N(E), M(E)^{W_{E}^{e}})  = \Hom_{R(E)^{W_{E}^{e}}}
(N(E), M(E))^{W_{E}^{e}} \\ = \Hom_{R(E)}
(R(E)\tensor_{R(E)^{W_{E}^{e}}}N(E), M(E))^{W_{E}^{e}}
\end{multline*}
so that
$$\Hom_{R_{inv}}(N, \Psi M)=\Hom_R(N, M)^{W_{\bullet, e}}$$
as required.


The unit is an isomorphism since 
$N(E)$ and $R_{inv}(E)$ both have trivial $W_{E}^{e}$-action: one may
take fixed points of the defining coequalizer, and use the fact that
this exact since $|W|$ is invertible. 
\end{proof}

\section{The algebraic model of toral $G$-spectra}
\label{sec:Liediagrams}

We are now ready to specialize the general discussion to the example
arising from compact Lie groups. 
We will describe $\cA (N,toral)$ and $\cA (G, toral)$ using
enrichments of  $\cA (\mT)$.  The starting point is $\Sigma_a (\mT)$
together with  its action of $\mW G$. We will add a little decoration
to indicate which part of the isotropy group of $K$ is internal and
which is external. 

\subsection{Decorating the poset}

We summarize the information we need about a subgroup $K$. These were
discussed in detail in Sections \ref{sec:Lie} and \ref{sec:HBG}

\begin{itemize}
\item $H^*(BW_G^eK)$
\item $W_G^dK$
\item the action of $W_G^dK$ on $H^*(BW_G^eK)$ 
\end{itemize}

Equivalently we need
\begin{itemize}
\item $(\mW G)_K= (\mN \cap NK)/\mT  =\mW WK$
\item The action of $(\mW G)_{K} $ on $\mT /K$
\item The subgroup  $(\mW G)_{K}^{e}$ represented by elements of
  $(N_G(K))_e$
\end{itemize}

Recall  from Lemma \ref{lem:WGK}  that $(\mW G)_{K}^{e} =\mW
(W_G^eK)$, and from Lemma \ref{lem:NGNGe} $(\mW G)_{K}^{d}=W_G^dK$ so that the second and
third pieces of information give
$$H^*(BW_G^eK)=H^*(B\mT /K)^{\mW (W_G^eK)}.$$
The quotient group  $W_G^dK=WK/W_G^eK=(\mW WK)/(\mW (W_G^eK))$ then acts on the
ring of invariants to give the twisted group ring. 

\begin{remark}
We will also need this data with $K$ replaced by a flag $E=(K_0\supset
\cdots \supset K_s)$. This is closely analagous,  once we define
$$N_G(E)=N_G(K_0)\cap \cdots \cap N_G(K_s).$$
Thus $W_G(E)=N_G(E)/K_s$, 
$$(\mW G)_E=(\mN \cap N_G(E))/\mT=(\mW G)_{K_0} \cap \cdots \cap (\mW
G)_{K_s}$$ 
and 
$$(\mW G)_E^e=\mW W_G^e(E).$$
It is clear that this again gives a normal component structure and 
$$(\mW G)_E^d=W_G^d(E).$$
\end{remark}

\subsection{Structures from Lie groups}
The basis of the model is  the diagram $\RRa$ of commutative rings defined on subgroups $K\subseteq \mT$ by
$\RRa (K)=H^*(B\mT /K)$.  For modules $M$ over $\RRa$ there are numerous
structures: $\mW G$ acts on rings, on Euler classes and on
modules. Here we lay out how the structures interact with the group action with
a view to showing it gives examples of  $\mW G$-equivariant
diagrams in the sense of Section \ref{sec:equivdiagrams} above. At
various times  we will consider the poset $\Sigma$ to be either the
poset  $\Sigma_a(\mT )$ of closed subgroups of $\mT$ and cotoral
inclusions or the poset 
$\flag \Sigma_a(\mT)$ of flags in $\Sigma_a (\mT)$ under inclusion.   

The action gives the following structure.
\begin{itemize}
\item Conjugation by an element $w \in \mW G$ gives a group
  homomorphism $r_{w^{-1}}: K^w=w^{-1}Kw\lra K$ and a map $\overline{r}_{w^{-1}}:\mT /K^w \lra \mT /K$. 
\item The conjugation in the previous bullet point gives a ring homomorphism
$$w_m=(\overline{r}_{w^{-1}})^*: \RRa (K)=H^*(B\mT /K)\lra H^*(B\mT /K^w)=\RRa (K^w), $$
with $(vw)_m=v_mw_m$, $e_m=1$.

If we define $w_*\RRa$ by $(w_*\RRa)(K)=\RRa (K^w)$, then we have a
homomorphism of rings $w_m: \RRa \lra w_*\RRa$. This composes by the
rule $(vw)_m=v_mw_m$, $e_m=1$, so that we have an equivariant
$\Sigma_a$-diagram of rings in the sense of Sections
\ref{sec:equivdiagrams}  and \ref{sec:equivringdiagrams}. 
\item Pullback again gives a map $w_m: Rep (G/K)\lra Rep
 (G/K^w)$ on representations. If $U^{H}=0$ then $(w_*U)^{H^w}=0$.
\item By the previous bullet point, given $K\supseteq L$ pullback along the element $w$ maps
 Euler classes $\cE_{H/K}$ to Euler classes $\cE_{H^w/K^w}$.
\item We may therefore define a new diagram $w_*\RRa$ of rings on the poset of
  flags by
 $$(w_*\RRa)(K_0\supset \cdots \supset K_s) =\RRa(K_0^w\supset \cdots
 \supset K_s^w)$$
and we have ring maps $w_m: \RRa \lra w_*\RRa$ satisfying $(vw)_m=v_mw_m$, $e_m=1$.
We thus have a $\flag \Sigma_a$-diagram of rings in the sense of Sections
\ref{sec:equivdiagrams}  and \ref{sec:equivringdiagrams}. 

\item Given a module $M$ over $\RRa$ we may define a module $w_*M$
 over $w_*\RRa$  by 
$$(w_*M) (K_0\supset \cdots \supset K_s)= M(K_0^w\supset \cdots \supset K_s^w).$$ 
An equivariang module is given by module maps $w_m: M \lra w_*M$ over the ring map $w_m: \RRa
 \lra w_*\RRa$ with  $(vw)_m=v_mw_m$, $e_m=1$.
\end{itemize}

\subsection{Equivariant diagrams of rings}
The previous section shows that  $\RRa$ is a $\mW G$-equivariant
$\flag \Sigma_a(\mT)$-diagram of rings so we may apply the apparatus of
Sections \ref{sec:equivdiagrams} and \ref{sec:equivringdiagrams}. 
 This means that $(\mW G)_K$ acts on $\RRa (K)$ by ring homomorphisms,
and we may form the twisted group ring $\RRa (K)[(\mW G)_K]$, or take
invariants under $(\mW G)_{K}^{e}=\mW W_G^eK$ and then let $(\mW G)_K/(\mW G)_{K}^{e}=W_G^dK$
act by ring homorphisms and form 
$$\RRtw (K)=\RRa (K)^{\mW W_G^eK}[W_G^dK] =H^*(BW_G^eK)[W_G^dK].$$

We observe that $\RRa$ extends to a  $\flag (\Sigma_a(\mT))$-diagram of rings via
$$\RRa (K_0\supset \cdots \supset K_s)=\cEi_{K_0/K_s}H^*(B\mT /K_s) .$$

We have commented that the Lie component structure  on  $\flag
(\Sigma_a)$ is normal. This allows us to extend $\RR_{inv}$ to a $\flag
(\Sigma_a(\mT))$-diagram with
$$\RRtw (F)=\RRa (F)^{\mW W_G^eF}[W_G^dF] .$$

It is worth making the values explicit.
\begin{lemma}
\label{lem:locinv}
For any flag $F=(K_0\supset \cdots \supset K_s)$ we have
$$\RRtw (F) =\cEi_{K_0/K_s}H^*(BW_G^eF)[W_G^dF]$$
where 
$$\cE_{K_0/K_s}=\{ e(V) \st V \in \mathrm{Rep}(W_G^eF), V^{K_0}=0\}.$$
\end{lemma}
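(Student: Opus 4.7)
The plan is to unpack the definitions and reduce to two standard facts: commutation of rational invariants with localization, and a cofinality argument between two multiplicative sets of Euler classes.

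By the general construction in Section \ref{sec:equivringdiagrams} (in particular Lemma \ref{lem:Wddiagramistw}) applied to the Lie component structure of Subsection \ref{subsec:Liecomponent} on $\flag(\Sigma_a(\mT))$, we have
$$\RRtw(F) = \RRa(F)^{\mW W_G^e F}[W_G^d F],$$
and by definition $\RRa(F) = \cE^{-1} H^*(B\mT/K_s)$, where $\cE = \cE_{K_0/K_s}^{\mT/K_s}$ is the set of Euler classes of $\mT/K_s$-representations $V$ with $V^{K_0}=0$. Here $\mT/K_s$ is the maximal torus of $W_G^e F$ (by the same argument as in the subsection on the identity component applied to $W_G F$), and $\mW W_G^e F$ is its toral Weyl group.

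First I would check that the multiplicative set $\cE$ is stable under $\mW W_G^e F$. Since $\mW W_G^e F \subseteq (\mN \cap N_G F)/\mT$ and $N_G F = \bigcap_i N_G K_i$, every element $w \in \mW W_G^e F$ normalizes $K_0$, so $(V^w)^{K_0} = (V^{K_0})^w = 0$. Working rationally, so that $|\mW W_G^e F|$ is invertible, the standard averaging argument identifies
$$\bigl(\cE^{-1} H^*(B\mT/K_s)\bigr)^{\mW W_G^e F} \;\cong\; N(\cE)^{-1} H^*(B\mT/K_s)^{\mW W_G^e F},$$
where $N(s) := \prod_{w \in \mW W_G^e F} s^w$ is the norm. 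Borel's theorem (Section \ref{sec:HBG}) identifies the ring of invariants on the right with $H^*(BW_G^e F)$.

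It remains to show that $N(\cE)$ and the set $\cE_{K_0/K_s}^{W_G^e F}$ appearing on the right-hand side of the lemma generate the same localization of $H^*(BW_G^e F)$. For $V$ a $K_0$-essential $\mT/K_s$-representation, $\mathrm{Ind}_{\mT/K_s}^{W_G^e F} V$ restricts to $\bigoplus_w V^w$, whose $K_0$-fixed points are $\bigoplus_w V^{K_0} = 0$ since each $w$ normalizes $K_0$; thus $\mathrm{Ind}(V)$ is a $K_0$-essential $W_G^e F$-representation with Euler class $N(e(V))$, so $N(\cE) \subseteq \cE_{K_0/K_s}^{W_G^e F}$. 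Conversely, if $V$ is a $K_0$-essential $W_G^e F$-representation then its restriction to $\mT/K_s$ is $\mW W_G^e F$-invariant (up to isomorphism), so $N(e(V)) = e(V)^{|\mW W_G^e F|}$, which forces $e(V)$ to become invertible as soon as $N(\cE)$ is inverted. Tensoring on the right with the group ring factor $[W_G^d F]$ then yields the claimed formula.

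The main technical point to verify carefully is the behaviour of the $K_0$-essentiality condition under induction and restriction between $\mT/K_s$ and $W_G^e F$; both preservation statements rely on the observation that the relevant Weyl group normalizes every term of the flag, which is exactly what makes the Lie component structure on $\flag(\Sigma_a(\mT))$ decreasing and normal.
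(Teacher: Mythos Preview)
Your proposal is correct and follows essentially the same route as the paper: check the multiplicative set is $\mW W_G^eF$-stable, use the norm and characteristic zero to commute invariants past localization, invoke Borel's theorem to identify the invariants with $H^*(BW_G^eF)$, and then argue that $N(\cE)$ and the set of $W_G^eF$-Euler classes define the same localization. The only cosmetic difference is in this last cofinality step: the paper decomposes a $W_G^eF$-representation into $\mT/K_s$-weights falling into $\mW W_G^eF$-orbits and observes $e(V)$ is a product of orbit-products, whereas you phrase it via induction/restriction and the identity $N(e(V))=e(V)^{|\mW W_G^eF|}$; these are the same observation in different clothing.
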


\begin{proof}
Suppose $W$ is a finite group acting on a ring $R$
and   $S$ is a mulitiplicatively closed set closed under the action of
$W$. First, we note that inverting $S$ has the same effect as
inverting the elements $Ns=\prod_{w\in W}ws$, so that
$S^{-1}M=(NS)^{-1}M$.   Now observe
$$(S^{-1}M)^W=((NS)^{-1}M)^W=(NS)^{-1}(M^W),  $$
where the second equality uses the fact we are in characteristic
zero so that we may decompose $M$ into isotypical pieces and these
will not interact. 

Taking $W=\mW W_G^eF$, $R=H^*(B\mT /K_s)$ and $S=\cE_{K_0/K_s, \mT}$
this shows
$$\RRinv (F) =(N\cE_{K_0/K_s})^{-1}H^*(BW_G^eF). $$

Finally, we consider the effecting of inverting Euler classes. If we
choose  a representation $V$ of $W_G^e(F)$, its weights fall into
$W$-orbits. If the decomposition into weights is $V|_{\mT}=\bigoplus_i \alpha_i$
we have 
$$e(V)=e(\bigoplus_i \alpha_i)=\prod_ie(\alpha_i). $$
This is the product of orbit-products. Thus inverting $N\cE_{K_0/K_s, \mT /K_s}$ is equivalent to
inverting $\cE_{K_0/K_s, W_G^eF}$. 
\end{proof}

\begin{warning}
\label{warn:invnotloc}
In the case of the torus,  the ring $\RRa (K\supseteq L)$  is obtained
from $\RRa (L)$ by localization (i.e., by inverting $\cE_{K/L}$). This
is not the case for $\RRinv$. This is apparent even in the simple
example of Subsection \ref{subsec:so3}.
\end{warning}

\subsection{The rotation group}
\label{subsec:so3}
It is instructive to consider the example $G=SO(3)$, with $\mN =O(2)$,
$\mT =SO(2)$. We will display various data associated to a length 1
flag in rows. The first row is  a module $N$ over $\RRa$, the next
pair of rows gives $\RRa$, followed by the component structure
$W_{\bullet}^e$, a pair of rows for $\RRinv$ and finally an
$\RRinv$-module $M$.

We illustrate the structure for the particular flag $F=(\mT \supset
1)$ and its two length 0 subflags. Take particular note of 
  the fourth row, where we record the Weyl groups of $W_G^eK$ and
  $W_G^e(\mT \subseteq K)$, using the
abbreviation $W=\mW G$ (a reflection group of order 2). We will use 
this example in describing the functors relating $\cA
(G,toral)$ and $\cA (\mN, toral)$, so that $N$ is an equivariant
$\RRa$-module, potentially in $\cA (\mN, toral)$ and $M$ is an
$\RRinv$-module, potentially in $\cA (G, toral)$. As elsewhere
$H^*(BSO(2))=\Q [c]$ for an element $c $ of codegree 2 and 
$H^*(BSO(3))=\Q [d]$ for an element $d=c^2 $ of codegree 4. 

$$\diagram
N&& N(1) \rto & N(\mT \supset 1) & N(\mT) \lto \\
&&\Q [c] \rto \dto^= & \Q [c,c^{-1}]\dto^= & \Q\lto \dto^=  \\
\RRa &&\RRa(1) \rto & \RRa(\mT \supset 1) & \RRa(\mT) \lto \\
W_{\bullet}^e&&W  &  1& 1\\
\RRinv &&\RRinv (1) \rto & \RRinv (\mT \supset 1) & \RRinv (\mT) \lto \\
&&\Q [d] \rto \uto_= & \Q [c,c^{-1}]\uto_= & \Q\lto \uto_=  \\
M&&M(1) \rto & M(\mT \supset 1) & M(\mT) \lto 
\enddiagram$$
Note in particular that 
$$\RRinv (\mT \supset 1)=\Q [c,c^{-1}] \neq
\Q[d,d^{-1}]=\cEi_{\mT}\RRinv (1). $$

We should also consider the flag $\mT
\subset C_r$ for $r\geq 2$ so as to note the differences entailed by
the fact that $W(C_r)=\mW G$ is discrete and hence has trivial
identity component.  
$$\diagram
N&& N(C_r) \rto & N(\mT \supset C_r) & N(\mT) \lto \\
&&\Q [c] \rto \dto^= & \Q [c,c^{-1}]\dto^= & \Q\lto \dto^=  \\
\RRa &&\RRa(C_r) \rto & \RRa(\mT \supset C_r) & \RRa(\mT) \lto \\
W_{\bullet}^e&&1  &  1& 1\\
\RRinv&& \RRinv (C_r) \rto & \RRinv (\mT \supset C_r) & \RRinv (\mT) \lto \\
&& \Q [c] \rto \uto_= & \Q [c,c^{-1}]\uto_= & \Q\lto \uto_=  \\
M&& M(C_r) \rto & M(\mT \supset C_r) & M(\mT) \lto 
\enddiagram$$

\subsection{Subcategory conditions for $G$}
\label{subsec:qceG}

 The algebraic model $\cA (\mT)$ is the category of modules over the
diagram $\RRa$ of rings which  are subject to three conditions: (i)
quasi-coherence, (ii) extendedness and (iii) $\cF$-continuity. 

In view of Warning \ref{warn:invnotloc}, we must be explicit in
formulating the quasi-coherence and extendedness conditions for
equivariant $\RRinv$-modules on the poset $\flag (\Sigma_a)$.
We also observe that the conditions are compatible  with the 
$\mW G$-action.

Suppose then that we have flags
$$E=(K_0\supset K_1\supset \cdots \supset K_s)$$
and
$$F=(L_0\supset L_1\supset \cdots \supset L_t)$$
with $E\supset F$. This gives a ring map 
$$\RRinv (F)\lra \RRinv (E)$$ 
and for any $\RRinv$-module $M$ we have a structure map $M(F)\lra M(E)$.

In order to discuss quasi-coherence and extendedness, we introduce 
further terminology. This will be shown to be redundant, and not be
used after this subsection. 

\begin{defn}
(i) An $\RRinv$-module $M$  {\em follows the coefficients} if for any
pair of flags $E\supset F$ the structure map induces an isomorphism
$$\RRinv (E)\tensor_{\RRinv (F)}M(F)\cong M(E).$$
(ii) An $\RRinv$-module $M$ is {\em quasi-coherent} if it follows the
coefficients whenever $F=(K_s)$ is the singleton flag of the smallest
term in $E$
$$\RRinv (E)\tensor_{\RRinv (K_s)} M(K_s)\cong M(E). $$
(iii) An $\RRinv$-module $M$ is {\em extended} if it follows the
coefficients whenever $F=(K_0)$ is the singleton flag of the largest
term in $E$
$$\RRinv (E)\tensor_{\RRinv (K_0)} M(K_0)\cong M(E). $$
\end{defn}

\begin{remark}
(i) If $M$ is qce then it follows the coefficients for any inclusion
$E\supseteq F$ of flags. 

(ii) If $M$ follows the coefficients
for the addition of any single term to a flag then it is qce and
follows the coefficients in general. 

(iii) However if $M$ is qce for pairs this is not sufficient on its
own. For example we may consider the inclusion of a length 1 flag in a
length 2 flag: $(H\supset L) \lra (H\supset K \supset L)$. In this
case,  $\mW_{(H\supset K \supset L)} $ is typically a proper subgroup of
$\mW_{(H\supset L)}$ and so in general we have a proper containment
$$\RRa (H\supset K \supset L)^{\mW_{(H \supset L)}} =\cEi_{H/L}\RRa (L)^{\mW_{(H \supset L)}} 
\subseteq \cEi_{H/L}\RRa (L)^{\mW_{(H \supset K\supset L)}} = 
\RRa (H\supset K \supset L)^{\mW_{(H\supset K \supset L)}} .$$
The condition 
$$M(H\supset K \supset L)=
\RRa (H\supset K \supset L)^{\mW_{(H\supset K \supset
    L)}}\tensor_{\RRa (H\supset K \supset L)^{\mW_{(H \supset L)}}}
M(H\supset L)$$
is  a new condition, one not seen in the inclusion of a length 0 flag in a
length 1 flag.  
\end{remark}

The idea of $\cF$-continuity, is that it provides a uniform 
bound on denominators. In the original setting, the 
definition is that $\cF$-continuity requires a specified factorization for
each subgroup $K$:
$$\diagram 
&\cEi_K\prod_{L\subseteq K}M(L) \dto\\
M(K)\ar@{-->}[ur] \rto &\prod_{L\subseteq K}\cEi_{K/L}M(L) 
\enddiagram$$
and these should be compatible with composition. 
We note that the collection of subgroups involved in this condition depends $\mW
G$-equivariantly on $K$, and if the condition holds for $K$ it holds for any subgroup in
the  $\mW G$-orbit of $K$. 

We may now formulate the condition for $\RRinv$-modules. The equivariance will ensure that maps have
image in modules of invariants, so we avoid the use of invariants in
the statement.

\begin{defn}
 An $\RRinv$-module $M$ is {\em $\cF$-continuous} if there is a specified factorization for
each subgroup $K$
$$\diagram 
&\cEi_K\prod_{L\subseteq K}\RRa (L)\tensor_{\RRinv (L)}M(L)\dto\\
M(K)\ar@{-->}[ur] \rto &\prod_{L\subseteq K}\cEi_{K/L}\RRa (L)
\tensor_{\RRinv (L)}M(L) 
\enddiagram$$
and these should be compatible with composition. 
\end{defn}

\subsection{The model}
\label{subsec:model}
We are now ready to define the algebraic model $\cA (G, toral)$.
Throughout this subsection we use the diagram 
$\Sigma =\flag (\Sigma_a(\mT))$ and $\RRa$ is viewed as a
$\Sigma$-diagram of rings.

\begin{defn}
(i) The category of $\RRa[\mW G]$-modules is the category of
$\mW G$-equivariant  $\RRa$-modules.
In view of Lemma \ref{lem:WequivRmodisdiagram} we will not distinguish between
the model in which these are $\Sigma$-diagrams with the additional
structure of a $\mW G$-action and the model in which they are
 $\Sigma \sdr \mW G$-diagrams.  

(ii) The category $\cA (\mT )[\mW G]$ is the category
of $qce$, $\cF$-continuous $\mW G$-equivariant $\RRa$-modules.
\end{defn}

Now consider the Lie group component  structure $(\mW G)_{\bullet}^{e}$
on  $\mW G\sdr \Sigma$ and the quotient $(\mW G)_{\bullet}^{d}$
(see  Subsection \ref{subsec:Liecomponent}). This gives two diagrams
of rings.  Firstly, we have   the  $(\mW G)_{\bullet}^{d}$-diagram of invariants,
$\RRinv := \RRa^{(\mW G)_{\bullet}^{e}}$,   so that 
$$\RRinv (K) =H^*(BW_G^eK).$$  
Secondly, we have the $\Sigma$-diagram  $\RRtw$ of twisted group
rings,   whose value at a subgroup $K$ is 
$$\RRtw (K) =H^*(BW_G^eK)[W_G^dK].$$

\begin{defn}
\label{defn:AGtoral}
(i) The category $\cA_{inv} (G , toral)$ is the category
of $qce$, $\cF$-continuous $\RRinv$-modules.

(ii) The category $\cA_{tw} (G , toral)$ is the category of $qce$,
$\cF$-continuous modules over the diagram $\RRtw$ of rings.
\end{defn}

\begin{remark}
By Lemma \ref{lem:Wddiagramistw}, 
$\cA_{inv}(G,toral)\simeq \cA_{tw}(G,toral)$,
and as a matter of style we view $\cA_{inv}(G,toral)$ as the primary
one, abbreviating it to $\cA (G, toral)$.
\end{remark}

There is one special case where it is easy to describe the model of
toral spectra. 

\begin{lemma}
\label{lem:ANtoral}
The model for toral spectra simplifies when the identity component is
a torus to give 
$$\cA (\mN, toral)=\cA (\mT)[\mW G]$$
\end{lemma}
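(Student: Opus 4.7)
The plan is to unpack Definition \ref{defn:AGtoral} in the special case $G=\mN$ and show that the Lie group component structure collapses to the discrete one at every flag. Once this is established, $\RRinv$ coincides with $\RRa$ as an $\mW G$-equivariant diagram of rings, and the defining conditions of $\cA(\mN, toral)$ reduce verbatim to those defining $\cA(\mT)[\mW G]$.

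The main calculation is that $(\mW G)_E^e=1$ for every flag $E=(K_0\supset\cdots\supset K_s)$ in $\Sigma_a(\mT)$. First I would show that $(N_G(E))_e=\mT$: because $\mT$ is abelian and each $K_i$ lies in $\mT$, every element of $\mT$ normalizes each $K_i$, so $\mT\subseteq N_G(E)=\bigcap_i N_G(K_i)$; on the other hand $(N_G(E))_e\subseteq G_e=\mT$ by the standing hypothesis that $G=\mN$. Consequently $W_G^e(E)=\mT\cdot K_s/K_s=\mT/K_s$, which is connected and hence itself a torus. Its toral Weyl group is therefore trivial, yielding $(\mW G)_E^e=\mW W_G^e(E)=1$, and correspondingly $(\mW G)_E^d=(\mW G)_E$. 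This reproduces the observation already recorded in the example following Subsection \ref{subsec:Liecomponent}, now uniformly across flags.

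The remainder is formal. Since $(\mW G)_\bullet^e=1$ at every flag, passing to $W_\bullet^e$-invariants does nothing, so $\RRinv(E)=\RRa(E)^1=\RRa(E)$ as $\flag(\Sigma_a(\mT))$-diagrams of rings. The discrete residual $(\mW G)_\bullet^d$ therefore coincides with the full transport category $\mW G\sdr\flag(\Sigma_a(\mT))$. Applying Lemma \ref{lem:WequivRmodisdiagram}, the category of $\RRinv$-modules, with its $(\mW G)_\bullet^d$-diagram structure, is exactly the category of $\mW G$-equivariant $\RRa$-modules. Because the underlying diagrams of rings are literally equal, the quasi-coherence, extendedness, and $\cF$-continuity conditions imposed in Definition \ref{defn:AGtoral}(i) translate without change into the conditions defining $\cA(\mT)[\mW G]$, and the two categories coincide on the nose.

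The step meriting most care is extending the identity-component calculation from single subgroups to flags, but since $N_G(E)$ always contains the connected group $\mT$, the flag case follows from the subgroup case with no extra work; I do not anticipate a substantive obstacle.
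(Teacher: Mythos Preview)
Your proposal is correct and follows essentially the same approach as the paper. The paper's proof is terse—it simply observes that for $K\subseteq\mT$ the identity component of $W_{\mN}K$ is the torus $\mT/K$, whose toral Weyl group is trivial, so the component structure is discrete and $\RRinv=\RRa$—while you spell out the same computation, extend it explicitly to flags, and record the formal consequence for the module categories.
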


\begin{proof}
We need only observe that if $K\subseteq \mT$ then the identity
component of  $W_{\mN}K$ is a torus, and so it has trivial Weyl
group. The component structure is
therefore the trivial one, and $\RRinv=\RRa$. 
\end{proof}

\section{Toral $G$-spectra and toral $\mN$-spectra}
\label{sec:GspNsp}

We consider the algebraic counterpart of restriction from $G$-spectra
to $\mN$-spectra, and its right adjoint. 
We know from \cite{tnqcore} that the category of
$\mT$-spectra  is modelled by $\cA (\mT)$. It is rather clear (and
made explicit in  Lemma \ref{lem:piAisequiv})
that the module $M=\piA_*(X)$ in $\cA (\mT)$  arising from a
$G$-spectrum $X$ is  a $\mW G$-equivariant module. 

Our model $\cA (G,toral)$ has the property that the restriction from $G$-spectra
to $\mN$-spectra  is modelled by the functor $\theta_*$ defined in
Subsection \ref{subsec:Psitheta}. The purpose
of this section is to establish that the descent
adjunction (Proposition \ref{prop:Psitheta})  relating  $\RRinv$-modules and equivariant
 $\RRa$-modules continues to hold for the subcategories of  qce, $\cF$-continuous
modules. 

\subsection{From $\protect \cA (G,toral)$ to $\cA (\mT)[\mW G]$}

First we consider the algebraic counterpart of restriction. 

\begin{prop}
\label{prop:theta}
The functor
$$\theta_* :\mbox{$\RRinv$-modules} \lra \mbox{$\mW G$-equivariant-$\RRa$-modules} $$
preserves quasi-coherence,
extendeness and $\cF$-continuity and hence induces a functor
$$\theta_*: \cA (G,toral)\lra \cA (\mT)[\mW G].$$
\end{prop}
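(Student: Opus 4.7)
The plan is to check each of the three properties separately, using the direct formula $(\theta_{*} N)(E) = \RRa(E) \otimes_{\RRinv(E)} N(E)$ and the fact that tensor product is associative and commutes with localization. That $\theta_{*} N$ carries a natural $\mW G$-equivariant $\RRa$-module structure is part of the abstract adjunction of Proposition~5.3: the $(\mW G)_{E}^{e}$-action on $\RRa(E)$ together with the trivial $(\mW G)_{E}^{e}$-action on the $\RRinv(E)$-module $N(E)$ produces a $(\mW G)_{E}$-action on the tensor product, while the $(\mW G)_{E}^{d}$-equivariance of $N$ (viewed as a module over the twisted group ring via Lemma~6.5) combines with the $\mW G$-action on $\RRa$ to give the full $\mW G$-equivariance on $\theta_{*} N$. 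So the content is in verifying the three subcategory conditions.

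For quasi-coherence, given a flag $E=(K_{0}\supset\cdots\supset K_{s})$ with minimal term $K_{s}$, observe that
\[
\RRa(E)\otimes_{\RRa(K_{s})}(\theta_{*}N)(K_{s})
=\RRa(E)\otimes_{\RRa(K_{s})}\RRa(K_{s})\otimes_{\RRinv(K_{s})}N(K_{s})
=\RRa(E)\otimes_{\RRinv(K_{s})}N(K_{s}),
\]
while quasi-coherence of $N$ gives $N(E)=\RRinv(E)\otimes_{\RRinv(K_{s})}N(K_{s})$, so
\[
(\theta_{*}N)(E)=\RRa(E)\otimes_{\RRinv(E)}\RRinv(E)\otimes_{\RRinv(K_{s})}N(K_{s})
=\RRa(E)\otimes_{\RRinv(K_{s})}N(K_{s}).
\]
The same rearrangement, applied with $K_{0}$ in place of $K_{s}$, handles extendedness. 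Thus quasi-coherence and extendedness are both transported along $\theta_{*}$ by pure associativity of tensor product.

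The main obstacle, as expected, is $\cF$-continuity, because we must produce a factorization in the $\RRa$-diagram from one in the $\RRinv$-diagram, where the difference is exactly the base ring over which the factorization is formed. The key observation is that the $\cF$-continuity condition for $\RRinv$-modules is already formulated in terms of the objects $\RRa(L)\otimes_{\RRinv(L)}N(L)=(\theta_{*}N)(L)$, so the target of the factorization for $N$ is literally the target of the factorization required for $\theta_{*}N$. Given the prescribed factorization
\[
N(K)\lra \cEi_{K}\prod_{L\subseteq K}(\theta_{*}N)(L)\lra \prod_{L\subseteq K}\cEi_{K/L}(\theta_{*}N)(L),
\]
I would apply $\RRa(K)\otimes_{\RRinv(K)}(-)$ throughout and use the fact that the Euler classes in $\cE_{K}$ and $\cE_{K/L}$ lie in $\RRinv$ so localization commutes with this extension of scalars; this yields the required factorization for $(\theta_{*}N)(K)$. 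Compatibility with composition is inherited from the compatibility for $N$ since tensoring with $\RRa(K)$ over $\RRinv(K)$ is functorial.

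Finally, each step is compatible with the $\mW G$-action: the tensor product, the localizations $\cEi_{K}$ and $\cEi_{K/L}$, and the product over subgroups $L\subseteq K$ all carry natural $\mW G$-equivariant structures, and the factorizations chosen for $N$ transport to $\mW G$-equivariant factorizations for $\theta_{*}N$ because the orbit $\mW G\cdot K$ indexes compatible data. Consequently $\theta_{*}N$ lies in $\cA(\mT)[\mW G]$, as required.
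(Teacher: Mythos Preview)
Your arguments for quasi-coherence and extendedness are identical to the paper's: both are pure associativity-of-tensor calculations, collapsing $\RRa(E)\otimes_{\RRa(K_s)}\RRa(K_s)\otimes_{\RRinv(K_s)}N(K_s)$ and likewise for $K_0$.

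For $\cF$-continuity you have the right idea but there is a genuine gap in the execution. Tensoring the factorization for $N$ by $\RRa(K)\otimes_{\RRinv(K)}(-)$ does \emph{not} produce the required diagram for $\theta_*N$ on the nose: the middle and target terms acquire an extra factor $\RRa(K)\otimes_{\RRinv(K)}(-)$, since $\RRa(K)$ is free of rank $|\mW W_G^eK|$ over $\RRinv(K)$ and this is typically larger than $1$. Your justification that ``the Euler classes lie in $\RRinv$'' is both inaccurate (they lie in $\RRa(L)$; only their norm products are invariant, cf.\ Lemma~\ref{lem:locinv}) and beside the point, since localization commutes with any tensor product. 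What is actually needed is a further map
\[
\RRa(K)\otimes_{\RRinv(K)}\Bigl(\cEi_K\prod_{L\subseteq K}(\theta_*N)(L)\Bigr)\;\lra\;\cEi_K\prod_{L\subseteq K}(\theta_*N)(L),
\]
and similarly on the target row, collapsing the extra factor. The paper supplies exactly this by drawing a two-column diagram whose ``right hand horizontals'' are induced by the unit maps $M(L)\to\RRa(L)\otimes_{\RRinv(L)}M(L)$ together with the inflation $\RRa(K)\to\RRa(L)$; it also invokes that $\RRa(F)$ is free of finite rank over $\RRinv(F)$, which is what lets the tensor interact sensibly with the infinite product over $L\subseteq K$. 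Once you add this step your argument becomes the paper's.
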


\begin{proof}
Suppose $M$ is an $\RRinv$-module with image $\theta_*M$ defined on a
flag $F$ by 
$$(\theta_*M)(F)=\RRa (F)\tensor_{\RRinv (F)} M(F). $$
We note that $\RRinv (F)=\RRa (F)^{(\mW G)_F^e}$, and $\RRa (F)$ is free
over $\RRinv (F)$. 
As in Lemma \ref{lem:locinv}, we note that a multiplicatively closed set $S$ preserved by
the action of a finite group has a cofinal multiplicatively closed
subset $NS$ whose elements are the products $Ns$ over orbits. Thus 
we will assume that the multiplicatively closed subsets are
invariant. Since $\theta_*M$ lies over $\mN$ the component strcuture is
trivial so the $\mW G$ action is entirely through equivariance (no
invariants are involved). Accordingly, it suffices to verify quasicoherence and
extendedness for pairs rather than more general flags. We will
write the proof in those terms since the subgroups concerned appear
 more directly.

If $M$ is quasi-coherent then the condition on cotoral pairs is that
the natural map induces an isomorphism $\RRinv (K\supseteq
L)\tensor_{\RRinv (L)}M(L)=M(K\supseteq L)$. It
follows that 
$$\begin{array}{rcl}
(\theta_*M)(K\supseteq L)
&=&\RRa (K\supseteq L)\tensor_{\RRinv(K \supseteq L)}
M(K\supseteq L) \\
&=&\RRa (K\supseteq L)\tensor_{\RRinv(K \supseteq L)}
\RRinv(K \supseteq L)\tensor_{\RRinv (L)}M(L) \\
&=&\RRa (K\supseteq L)\tensor_{\RRinv (L)}M(L) \\
&=&\RRa (K\supseteq L)\tensor_{\RRa (L)}\RRa (L)\tensor_{\RRinv
  (L)}M(L) \\
&=&\RRa (K\supseteq L)\tensor_{\RRa (L)}(\theta_* M)(L) 
\end{array}$$
and $\theta_* M$ is also quasi-coherent. 

If $M$ is extended then 
$$\RRinv (K\supseteq L) \tensor_{\RRinv (K)}M(K)=M(K\supseteq L).$$
For $\theta_*M$ we may then calculate
$$\begin{array}{rcl}
\RRa (K\supseteq L) \tensor_{\RRa (K)} (\theta_* M)(K)&=&\RRa (K\supseteq L) \tensor_{\RRa (K)}\RRa (K)\tensor_{\RRinv
(K)}M(K)\\
&=&
\RRa (K\supseteq L) \tensor_{\RRinv
(K)}M(K)\\
&=&
\RRa (K\supseteq L) \tensor_{\RRinv (K\supseteq L)}(\RRinv (K\supseteq
L )\tensor_{\RRinv
(K)}M(K))\\
&=&\RRa (K\supseteq L) \tensor_{\RRinv (K\supseteq L)}M(K\supseteq L)\\
&=&(\theta_* M) (K\supseteq L).
\end{array}$$
Thus $\theta_* M$ is also extended.

Supposing that $M$ is  $\cF$-continuous. Since  $\RRa
(F)$ is free of finite rank over $\RRinv (F)$, we may form the diagram 
$$\hspace*{-6ex}\diagram 
&\RRa (K)\tensor_{\RRinv (K)} \cEi_K\prod_{L\subseteq  K}M(L) \dto\rto &
\cEi_K\prod_{L\subseteq K}\RRa ( L)\tensor_{\RRinv (L)} M(L) \dto\\
\RRa (K)\tensor_{\RRinv (K)}M(K)\ar@[..][ur] \rto &\RRa (K)\tensor_{\RRinv (K)}\prod_{L\subseteq
K}\cEi_{K/L}M(L)\rto &
\prod_{L\subseteq K}\cEi_{K/L} \RRa (L)\tensor_{\RRinv ( L)} M(L). 
\enddiagram$$
The two right hand horizontals are induced by the $\RRinv (L)$-maps
$$M(L)\lra\RRa(L)\tensor_{\RRinv (L)}M(L)$$
using the universal property of  products.
The diagram shows that $\theta_*M$ is also $\cF$-continuous. 
\end{proof}

We will show in Proposition \ref{prop:thetaPsionG} that $\theta_*$  fits into a diagram 

$$\diagram
\mbox{toral-$G$-spectra} \rto^{\res^G_{\mN}} \dto_{\piAG_*} & \mbox{$\mN$-spectra} \dto^{\piAN_*}\\
\cA (G,toral) \rto^{\theta_*} \ar@{=}[d] &\cA (\mT)[\mW G] \ar@{=}[d]\\
\mbox{$\cF$-cts-qce-$\RRinv$-modules} \rto &\mbox{$\mW G$-equivariant-$\cF$-cts-qce-$\RRa$-modules}. 
\enddiagram$$

We conjecture that these maps to abelian categories can be
upgraded to Quillen equivalences with the associated differential
graded objects.


\subsection{Normal modules}

By contrast with $\theta_*$, the fact that the functor $\Psi$ takes qce modules to qce modules is rather subtle. 
Consider for instance the quasi-coherence associated to a cotoral
inclusion $K\supseteq L$. If the $\mW G$-equivariant $\RRa$-module $N$ 
is quasi-coherent, then $N(K\supseteq L)=\cEi_{K/L}N(L)$. We may
take $(\mW G)_{K\supseteq L}$-invariants of both sides, but since
$(\mW G)_{K\supseteq L}$ may be a proper subgroup of $(\mW G)_{L}$
this is not the quasicoherence condition for $\Psi N$, which states
instead  that
$$N(K\supseteq L)^{(\mW G)_{K\supseteq L}}=\cEi_{K/L}\RRa
(L)^{(\mW G)_{K\supseteq L}}\tensor_{\RRa(L)^{(\mW G)_L}}N(L)^{(\mW G)_L}.$$
In effect we need to be able to reconstruct modules from their
invariants using the ring $\RRa$. This is a special property not
enjoyed by all modules. 

We
suppose then that $W$ is a finite group acting on a $\Q$-algebra $R$. 

\begin{defn}
We say that a $W$-equivariant $R$-module $M$ is {\em normal} if the
natural map 
$$\nu: R\tensor_{R^W} M^W\lra M$$
is an isomorphism. 
\end{defn}

It is worth noting that normality is a strong condition. 
\begin{example}
(i)  Clearly if  $R=\Q G$ and $M$ is a non-trivial simple module then
$M$ is not normal. 

(ii) This also happens for modules that arise in our setting.  For instance
we may take $R=H^*(BSO(2))=\Q [c]$ with $W$ of order 2 acting to negate $c$,
so that $R^W=H^*(BSO(3))=\Q [d]$ with $d=c^2$. However it is easy to
see that the ideal $M=(c)$ is not normal (for example because the
inclusion $(c^2)\subseteq (c)$ is an isomorphism on $W$-fixed points
and the free module $(c^2)$ is normal). The fact that will give the
conclusion we need is that if $d$ is inverted everywhere (so $R=\Q
[c,c^{-1}]=M$) then we do obtain a normal module. 
\end{example}

There is an easy positive result.
 
\begin{lemma}
\label{lem:RRG}
If $R$ is free over $R^W$ then 
the class of normal $R$-modules is closed under the following
operations
\begin{itemize}
\item Arbitrary sums. 
\item Passage to kernels
\item Passage to cokernels
\item Passage to extensions
\end{itemize}

Any extended module of the form $M=R\tensor_S M'$ with $W$ acting
trivially on $S$ and $M'$ is normal. 
\end{lemma}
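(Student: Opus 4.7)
My plan is to set up two basic exactness facts and a decomposition of $R$ and then deduce all five claims from them.

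First I would record that the functor $M \mapsto M^W$ is exact on $W$-equivariant $R$-modules, since we are in characteristic zero and the averaging element $e = \frac{1}{|W|}\sum_{w\in W} w$ is an idempotent projector onto the invariants. Combined with the hypothesis that $R$ is free over $R^W$, which makes $R \tensor_{R^W}(-)$ exact, this shows that the functor
\[ F(M) := R \tensor_{R^W} M^W \]
is exact, and the unit $\nu : F \Rightarrow \mathrm{Id}$ is a natural transformation between exact functors from $W$-equivariant $R$-modules to $R$-modules.

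Given this, the closure properties are quick. For arbitrary direct sums, I would note that $(-)^W$ commutes with arbitrary coproducts (since $W$ is finite, invariants and coproducts in abelian groups commute) and that tensor products commute with coproducts, so $F$ preserves direct sums and $\nu_{\bigoplus M_i} = \bigoplus \nu_{M_i}$. For kernels, cokernels and extensions, given a short exact sequence $0 \to M_1 \to M_2 \to M_3 \to 0$ or a two-term sequence arising from a map, I would apply $F$ to obtain an exact sequence with a comparison map to the original via $\nu$, then invoke the five-lemma (or the appropriate short diagram chase for kernels and cokernels): if $\nu$ is an isomorphism on two of the three terms in each case, it is an isomorphism on the third.

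The genuinely new input is the extended module claim; this is where I expect to spend the most care. Given $M = R \tensor_S M'$ with trivial action on $S$ and $M'$, the map $S \to R$ factors through $R^W$, so $M = R \tensor_{R^W} N$ with $N := R^W \tensor_S M'$ having trivial $W$-action. Using Maschke's theorem over $\Q$, the averaging idempotent $e$ splits $R$ as an $R^W[W]$-module into $R = R^W \oplus R'$ with $R' := (1-e)R$ satisfying $(R')^W = 0$. Then
\[ M = (R^W \tensor_{R^W} N) \oplus (R' \tensor_{R^W} N) = N \oplus (R' \tensor_{R^W} N), \]
and since $W$ acts only on the first tensor factor, applying $e$ identifies $(R' \tensor_{R^W} N)^W$ with $(R')^W \tensor_{R^W} N = 0$. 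Hence $M^W = N$, and consequently $R \tensor_{R^W} M^W = R \tensor_{R^W} N = M$, so $\nu_M$ is an isomorphism.

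The main obstacle is the extended case: it is the one place where I actually use the ring-theoretic splitting of $R$ over $R^W$ and the commutation of invariants with tensor products, and care is needed to justify $(R' \tensor_{R^W} N)^W = (R')^W \tensor_{R^W} N$. The other parts reduce cleanly to abstract nonsense once the exactness of $F$ is established.
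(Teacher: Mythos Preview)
Your proposal is correct and follows essentially the same approach as the paper: both set up the comparison map $\nu\colon R\tensor_{R^W}M^W\to M$ as a natural transformation between exact functors (using characteristic zero for exactness of $(-)^W$ and freeness of $R$ over $R^W$ for exactness of $R\tensor_{R^W}(-)$), and then invoke the five-lemma for the closure properties. For the extended module, the paper simply asserts $(R\tensor_S M')^W = R^W\tensor_S M'$ in one line, whereas you justify this more carefully via the splitting $R=R^W\oplus R'$ and the averaging idempotent; this is the same computation, just spelled out.
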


\begin{proof}
For example if $F_0$, $F_1$ are normal and $M$ is the cokernel of a
map $F_1\lra F_0$ we may form the diagram
$$\diagram 
F_1\rto &F_0\rto &M\rto &0\\
R\tensor_{R^G}F_1^W\uto^{\cong}\rto
&R\tensor_{R^G}F_0^W\uto^{\cong}\rto &R\tensor_{R^W}M^W\uto \rto &0
\enddiagram$$
Because we are in characteristic 0, passage to $W$ fixed points is
exact, and by hypothesis $R$ is flat over $R^W$, so the isomorphism
follows from the short 5-lemma. 

The other cases are similar. For an extended module of the given form 
$M^W=(R\tensor_SM')^W=R^W\tensor_S M'$ and normality is clear. 
\end{proof}

We will show that the modules that occur in an object $N$ of $\cA (\mN ,
toral)$ are close enough to being normal to ensure that $\Psi N$ is
qce. The following examples show that this is somewhat less
restrictive than might be expected. 

\begin{example}
(i) We have seen that $H^*(BSO(2)) =\Q [c]$ is a free module over
$H^*(BSO(3))=\Q[d]$. More precisely 
$$\Q [c]=\Q [d]\tensor (\eps \oplus \Sigma^2 \sigma)$$
where $\eps$ is the trivial module  and 
$\sigma$ is the sign module. 
If we ignore grading then $\Q [c]=\Q [d][W]$. 

In any case it follows by decomposing $V$ into $W$-isotypical pieces that any $\Q [c]$-module of the form
$\Q[c,c^{-1}]\tensor V$ is normal. 

The relevance of this is that it shows the model of $\{ 1,
\mT\}$-$SO(3)$-spectra (i.e., of spectra with geometric isotropy in
$\{ 1, \mT\}$) behaves well. Indeed,  we may consider an object 
$$X=(N\stackrel{\beta}\lra \Q [c,c^{-1}]\tensor V)$$
 of the model
of $\{ 1, \mT\}$-$\mN$-spectra; this means $N$ is a $\Q [c][W]$-module
and $V$ is a $\Q [W]$-module with the map $\beta$ being inversion of $c$. By
the above argument,  $N$ is normal, and it follows that 
 $$\Psi X=(N^W\lra \Q [c,c^{-1}]\tensor V) $$
is qce.

(ii) Similarly for the rank 2 group $SU(3) $ with maximal torus
$ST(3)$ and Weyl group $\mW G=\Sigma_3$,  where 
$$H^*(BST(3))=H^*(BSU(3))\tensor (\eps \oplus \Sigma^2 \mu \oplus
\Sigma^4 \mu \oplus \Sigma^6\sigma ) ,  $$
where $\sigma$ is the nontrivial simple repesentation of degree 1 and
$\mu $ that of degree 2. If we ignore the grading then 
$$H^*(BST(3))=H^*(BSU(3))[\mW G] .$$
One may check that if we invert linear forms then any module of the
form $H^*(BST(3))\tensor V$ is normal (the case $V=\mu$ is most interesting).
\end{example}

It seems natural to expect that with linear forms inverted, the module $H^*(B\mT)\tensor V$
is normal for any compact Lie group $G$, and it may be that more
general statements could be formulated giving the result that $\Psi N$
is qce directly as was done for $G=SO(3)$ in the above
example. However, we will instead  use injective resolutions to reduce the
verification to special cases. 

\subsection{From $\protect \cA (\mT )[\mW G]$ to $\cA (G,toral)$}
After our discussion of normal modules we are equipped  to turn to the
right adjoint $\Psi$. 

\begin{prop}
\label{prop:Psi}
The functor 
$$\Psi :\mbox{$\mW G$-equivariant-$\RRa$-modules} \lra \mbox{$\RRinv$-modules} $$
 takes quasi-coherent, extended modules to quasi-coherent
extended modules and  preserves $\cF$-continuous modules and hence induces a functor
$$\Psi: \cA (\mT)[\mW G]\lra \cA (G,toral).$$
\end{prop}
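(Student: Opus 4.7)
The principal challenge is verifying quasi-coherence and extendedness for $\Psi M$: Warning \ref{warn:invnotloc} shows that $\RRinv(F)$ is genuinely larger than a mere localization of $\RRinv(K_s)$ or $\RRinv(K_0)$, so a direct mirror of the proof of Proposition \ref{prop:theta} is not available. The plan is to exploit the exactness of invariants in characteristic zero to reduce the $(\mW G)_F^e$-invariants of $M(F)$ to $(\mW G)_{K_s}^e$-invariants after Euler class inversion, and thereby to read off qce for $\Psi M$ from the isotypical structure.

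For $\cF$-continuity the argument is relatively formal. Since $\Psi$ is right adjoint to $\theta_*$ (Proposition \ref{prop:Psitheta}) it preserves limits, so the factorization diagram defining $\cF$-continuity of $M$ maps through the invariants functor to a candidate diagram for $\Psi M$; one only needs to check that invariants commute with the relevant localizations, which holds by the norm trick in Lemma \ref{lem:locinv}.

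For quasi-coherence, fix a flag $F = (K_0 \supset \cdots \supset K_s)$ with $s \geq 1$. Quasi-coherence of $M$ together with equivariance gives $M(F) = \cEi_{K_0/K_s} M(K_s)$, and noting that $(\mW G)_F^e$ stabilizes $K_0/K_s$ (so preserves $\cE_{K_0/K_s}$ as a set), exactness of invariants yields
$$(\Psi M)(F) \;=\; \cEi_{K_0/K_s}\,M(K_s)^{(\mW G)_F^e}.$$
The key algebraic step is that, after inverting $\cE_{K_0/K_s}$, the inclusion $M(K_s)^{(\mW G)_{K_s}^e} \hookrightarrow M(K_s)^{(\mW G)_F^e}$ becomes an isomorphism upon base change to $\RRa(K_s)^{(\mW G)_F^e}$. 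The reason is that any nontrivial $(\mW G)_{K_s}^e/(\mW G)_F^e$ isotypical piece of $M(K_s)$ involves a weight of $\mT/K_s$ that is nontrivial on $K_0/K_s$, hence supports an Euler class in $\cE_{K_0/K_s}$ and vanishes upon the localization. Combined with the computation of $\RRinv(F)$ from Lemma \ref{lem:locinv} this yields the desired identity $(\Psi M)(F) = \RRinv(F) \otimes_{\RRinv(K_s)} (\Psi M)(K_s)$. Extendedness is handled by the symmetric argument using $K_0$ and the extended structure of $M$.

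The main obstacle is the isotypical vanishing claim above; it is a normality-type statement in the sense of the discussion preceding the proposition. I would first verify it on the building-block modules of the form $M(K_s) = \RRa(K_s) \otimes V$ with $V$ a representation of $(\mW G)_{K_s}$ by direct character analysis, generalising the $SO(3)$ computation in Subsection \ref{subsec:so3}, and then extend to general $M \in \cA(\mT)[\mW G]$ by injective resolution, using Lemma \ref{lem:RRG} to transfer normality through short exact sequences and the exactness of $\Psi$ in characteristic zero to propagate qce via the five-lemma.
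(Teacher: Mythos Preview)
Your overall strategy---reduce to a class of building-block modules via injective resolution, exploiting closure of the good class under kernels, cokernels, sums and extensions---is exactly the paper's. The $\cF$-continuity argument is also right. But there are two substantive gaps in the qce argument.

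First, the isotypical vanishing heuristic is not correct as stated. You claim that a nontrivial $(\mW G)_{K_s}^e/(\mW G)_F^e$-isotypical piece of $M(K_s)$ ``involves a weight of $\mT/K_s$ that is nontrivial on $K_0/K_s$'' and therefore dies after inverting $\cE_{K_0/K_s}$. But characters of the finite group $(\mW G)_{K_s}^e$ do not correspond to torus weights in this way, and localization \emph{inverts} Euler classes rather than annihilating isotypical pieces. The paper explicitly flags the subtlety: in the Remark immediately following the proposition it notes that $\Psi$ does not preserve quasi-coherence or extendedness \emph{separately}, so your plan to prove qc from qc of $M$ alone and then handle $e$ by ``the symmetric argument'' cannot work. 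The paper instead treats the single comparison map $\nu_{E\supset F}$, which encodes qc and $e$ simultaneously.

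Second, and more seriously, your building blocks $\RRa(K_s)\otimes V$ are not the basic injectives. The injectives in $\cA(\mT)[\mW G]$ have values built from $H_*((B\mT/K)^{L\mT/K})$, the torsion injective over the polynomial ring, tensored with a group ring $\Q[\mW G]$. These are not free, so verifying the claim for $\RRa\otimes V$ does not cover them, and the injective-resolution step then has nothing to resolve \emph{into}. The paper's key manoeuvre, which you are missing, is to exploit the Gorenstein property of $H^*(B\mT/K)$: the torsion injective $H_*((B\mT/K)^{L\mT/K})$ is the local cohomology $H^s_{\fm}(R)$, and the stable Koszul complex exhibits it as the cohomology of a finite $\mW$-equivariant complex each term of which is a localization of $R$ itself. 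This reduces the injective case to the case $N(F)=\RRa(F)$, for which $\nu$ is trivially an isomorphism. The $\Q[W]$ factor then requires a separate, rather delicate argument comparing $H^*(N_G^e(E)/\mT)[W]$ and $H^*(N_G^e(F)/\mT)[W]$ using the collapse of the Serre spectral sequence for $\Gamma/T\to BT\to B\Gamma$.
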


\begin{remark}
The functor $\Psi$ does not preserve quasi-coherence or extendedness separately.
\end{remark}

\begin{proof}
First, $\cF$-continuity is straightforward, since $M(K)^{(\mW G)_K}$
maps into the $(\mW G)_K$-invariants inside the $(\mW G)_{K \supseteq
L}$-invariants, and we have already observed that the passage to
invariants commutes with products and localizations.  The main issue
is the qce property, which is rather delicate.

Suppose $N$ is an $\RRa$-module with image $\Psi N$ defined by 
$$(\Psi N)(F)=N(F)^{(\mW G)_F^e}.$$
As in Lemma \ref{lem:locinv},  we note that a multiplicatively closed set $S$ preserved by
the action of a finite group has a cofinal multiplicatively closed
subset $NS$ whose elements are the products $Ns$ over orbits. Thus 
we will assume that the multiplicatively closed subsets are
invariant. 

Now suppose $E\supseteq F$. Since $N$ is qce we have
$$N(E)=\RRa (E)\tensor_{\RRa (F)}N(F). $$
Taking fixed points under $(\mW G)_E^e$ we have 
$$(\Psi N)(E)=N(E)^{(\mW G)_E^e}=
\left[ \RRa (E)\tensor_{\RRa
   (F)}N(F)\right]^{(\mW G)_E^e}. $$
Since the connected structure is decreasing $(\mW G)_E^e\subseteq (\mW
G)_F^e$ and  we need to show the natural map 
\begin{multline*}
\nu_{E\supset F}: \RRinv (E) \tensor_{\RRinv (F)}(\Psi N)(F)=
\RRa (E)^{(\mW G)_E^e}\tensor_{\RRa
(F)^{(\mW G)_F}}N(F)^{(\mW G)_F^e}\lra \\ 
\left[ \RRa (E)\tensor_{\RRa
(F)}N(F)\right]^{(\mW G)_E^e} =(\Psi N)(E). 
\end{multline*}
is an isomorphism. 

The character of the problem is like that of normality, and we adopt a
similar strategy.  We first note that the question of whether
$\nu_{E\supset F}$ is an isomorphism only depends on $N$ only through
$N(F)$, which is a $(\mW G)_F$-equivariant $\RRa (F)$-module.

\begin{lemma}
The class of modules $N(F)$ for which $\nu$ is an isomorphism is  closed under the following
operations
\begin{itemize}
\item Arbitrary sums. 
\item Passage to kernels
\item Passage to cokernels
\item Passage to extensions
\end{itemize}
It is an isomorphism for $N(F)=\RRa (F)$.
\end{lemma}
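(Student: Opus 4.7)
The plan is to reduce the closure properties and the base case to flatness and exactness statements that follow from characteristic zero together with the explicit structure of $\RRa$. Throughout I write $U=(\mW G)_E^e$ and $V=(\mW G)_F^e$, noting $U\subseteq V$ by the decreasing property of the component structure.

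The base case $N(F)=\RRa(F)$ is a direct calculation: the LHS collapses to $\RRa(E)^U\tensor_{\RRa(F)^V}\RRa(F)^V=\RRa(E)^U$, while the RHS is $[\RRa(E)\tensor_{\RRa(F)}\RRa(F)]^U=\RRa(E)^U$, and tracing through the definitions shows $\nu$ is the identity under these identifications. Closure under arbitrary direct sums is also immediate: tensor products commute with colimits, and in characteristic zero the averaging projector $|H|^{-1}\sum_{h} h$ exhibits $(-)^H$ as commuting with arbitrary sums for any finite $H$, so both sides of $\nu$ commute with sums and naturality finishes the argument.

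The substantive content lies in closure under kernels, cokernels, and extensions. By the five-lemma it suffices to show that both sides of $\nu$ are exact functors of $N(F)$, regarded as a $(\mW G)_F$-equivariant $\RRa(F)$-module. Since invariants under a finite group are exact in characteristic zero, this reduces in each case to flatness of the relevant ring map. The RHS is exact because $\RRa(E)$ is flat over $\RRa(F)$: by Lemma \ref{lem:locinv} the structure map factors as an inflation $H^*(B\mT/L_t)\hookrightarrow H^*(B\mT/K_s)$ (a polynomial extension, hence free) followed by inversion of the Euler-class set $\cE_{K_0/K_s}$ (a localization, hence flat). For the LHS, we exploit that $V=\mW W_G^e(F)$ is the Weyl group of the compact connected Lie group $W_G^e(F)$ acting on its maximal torus, and so is a finite reflection group. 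Chevalley--Shephard--Todd then guarantees that $H^*(B\mT/K_s)$ is free of finite rank over $H^*(B\mT/K_s)^V$, and this freeness descends through the compatible Euler-class localizations of Lemma \ref{lem:locinv} to yield flatness of $\RRa(E)^U$ over $\RRa(F)^V$.

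The delicate step is this last flatness claim, since one is comparing fixed rings of unequal groups $U\subsetneq V$ acting on unequal underlying rings $\RRa(F)$ and $\RRa(E)$. It is here that the reflection-group hypothesis matters crucially (so that Chevalley--Shephard--Todd applies), together with the compatibility of invariants and Euler-class inversion recorded in Lemma \ref{lem:locinv}. Once these ingredients are in hand, the five-lemma argument is routine and all four closure properties fall out simultaneously.
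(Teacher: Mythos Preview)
Your strategy is exactly the paper's: show both sides of $\nu$ are exact functors of $N(F)$ and apply the five-lemma, with the base case and sums being straightforward. You are actually more explicit than the paper, which writes down the two-row diagram and invokes the short five-lemma without pausing to justify why the rows are exact.

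The one place you should tighten is the flatness of $\RRinv(E)=\RRa(E)^{U}$ over $\RRinv(F)=\RRa(F)^{V}$. Your sentence ``$H^*(B\mT/K_s)$ is free of finite rank over $H^*(B\mT/K_s)^V$'' is off: $V=(\mW G)_F^e$ stabilises the terms of $F$ (so acts on $H^*(B\mT/L_t)$), but need not stabilise $K_s$, the last term of $E$. So Chevalley--Shephard--Todd is being applied to the wrong ring, and the phrase ``this freeness descends through the compatible Euler-class localizations'' does not by itself get you from freeness of $\RRa(F)$ over $\RRinv(F)$ to flatness of $\RRinv(E)$ over $\RRinv(F)$.

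A clean way to close the gap: Chevalley gives $\RRa(F)$ free over $\RRinv(F)$ (the paper records this in the proof of Proposition~\ref{prop:theta}), and $\RRa(E)$ is flat over $\RRa(F)$ by inflation then localization, so $\RRa(E)$ is flat over $\RRinv(F)$. Now $U\subseteq V$ implies the image of $\RRinv(F)$ in $\RRa(E)$ is $U$-fixed, so the averaging idempotent for $U$ exhibits $\RRinv(E)=\RRa(E)^{U}$ as an $\RRinv(F)$-linear retract of $\RRa(E)$; a retract of a flat module is flat. With this in hand your exactness-plus-five-lemma argument goes through verbatim and matches the paper.
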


\begin{proof} This  clear for sums, and it is clear for $N(F)=\RRa
  (F)$. We illustrate the other cases by the passage to
  kernels. Suppose then that $\nu$ is an isomorphism for $N(F)=B,C$
  and that we have an exact sequence  
$$0\lra A\lra B \lra C$$
Taking $\mW_F=(\mW G)_F$-invariants, and tensoring with $\RRa
(E)^{\mW_E}$ over $\RRa (F)^{\mW_F}$ we obtain the first row in
the following diagram, and similarly the second row is obtained by
tensoring with $\RRa (E)$ and taking $\mW_E$-invariants. The
result follows from the short 5-lemma.
$$\diagram 
0\rto &
\RRa (E)^{\mW_E}\tensor_{\RRa (F)^{\mW_F}}A^{\mW_F}\rto \dto  
&\RRa (E)^{\mW_E}\tensor_{\RRa (F)^{\mW_F}}B^{\mW_F}\rto \dto  &\RRa (E)^{\mW_E}\tensor_{\RRa (F)^{\mW_F}}C^{\mW_F} \dto  \\
0\rto &
\left[ \RRa (E)\tensor_{\RRa (F)}A\right]^{\mW_E} \rto &\left[
  \RRa (E)\tensor_{\RRa (F)}B\right]^{\mW_E} \rto &
\left[ \RRa (E)\tensor_{\RRa (F)}C\right]^{\mW_E} 
\enddiagram$$
\end{proof}

In effect the lemma says that the result is only obvious when $N(F)$
is a free $\RRa (F)$-module. The strategy of proof is to reduce to the
case of certain standard injectives that we identify
precisely. We  note that these standard injectives come from the polynomial rings $H^*(B\mT
/K)$. Because the polynomial ring  $H^*(B\mT /K)$ is Gorenstein, the
injective is also the local cohomology and we can deduce this case
from  that of the free module. 

In more detail, we show in Section 
\ref{sec:AGtoralinj} that any module $N$  admits an injective 
presentation $0\lra N \lra I_0 \lra I_1$ where $I_0$ and $I_1$ are sums
of $\mW G$-equivariant injectives of a particular form.  It therefore 
suffices to prove the result for  the special case of these basic
injectives. These are discussed in detail in Section
\ref{sec:AGtoralinj}, but we will summarize the properties we need
here to avoid interrupting the thread of the argument.

Suppose  then that $K\subseteq \mT$ and consider  a basic injective
arising from $K$. This is obtained from an injective module $I$ over
$H^*(B\mT /K)$, namely $$I=H_*(B\mT /K^{L\mT/K}).$$ 
 Indeed, the right adjoint $f_{K}^{\mT}$ to evaluation at $K$ gives a
an injective  $f_{K }^{\mT}(I)$ in $\cA (\mT)$ and then we may
coinduce the module to $\mN$, where it takes the form
$$f_{(K) }^{\mN}(\Q [W]\tensor I)=f_{(K) }^{\mN}(I)\tensor \Q [W]$$
 in $\cA (\mN, toral)=\cA (\mT )[\mW G]$. Notice that the value of
 this injective at any flag is free over $\Q [W]$. 

Of course $N (H) =0$ unless $H $ is subconjugate to $K$. From 
the qce condition it follows  that the value $N(F)$ is zero unless $K\supseteq
L_0$.  We note that if $K \not \subseteq K_0$ then $I$ is
$\cE_{K_0/L_0}$ torsion; as observed elsewhere, we can always localize
with respect to products over $W$-orbits.  Thus the qce  condition for $\Psi N$
holds for such flags. We may therefore suppose that $K\supseteq K_0$.

We may be explicit about the value. Indeed,  $K\supseteq L_0\supseteq \cdots \supseteq L_t$ and 
$\mT/L_t=\mT /K \times K/L_t$. Thus 
\begin{multline*}
N(F) =\cEi_{L_0/L_s}H^*(B\mT/L_s)\tensor_{H^*(B\mT /A)}H_*(B(\mT
/K)^{L (\mT /K)}) [\mW G]=\\
\cEi_{L_0/L_s}H^*(BK/L_s)\tensor_{\Q}H_*(B(\mT
/K)^{L(\mT /K)}) [\mW G]. 
\end{multline*}
It remains to observe that $\nu_{E\supset F}$ is an isomorphism for
this $N(F)$.  We will first verify the statement without $\mW G$. 
For this we apply the following lemma to $T=\mT /K$.

\begin{lemma}
Suppose  $\fW$ is any finite subgroup of $Aut (T)$ consider the category $\fW$-equivariant $H^*(BT)$-modules.

If $BT^{LT}$ is the Thom space of the tangent space $LT$ of $T$ at
$e$, the module $H_*(B T^{LT})$ is the cohomology of a finite complex of
$H^*(BT)[\fW]$-modules each term of which  generated by $H^*(BT)$ using direct sums,
cokernels and direct limits. 
\end{lemma}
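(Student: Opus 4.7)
The plan is to identify $H_*(BT^{LT})$ with the top local cohomology of $R := H^*(BT)$ at the augmentation ideal $\fm$, and then realise that as the cohomology of a $\fW$-equivariant stable Koszul complex built from a $\fW$-stable generating set of $\fm$.

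For the first step, since $T$ acts trivially on its own Lie algebra, the Thom isomorphism identifies $H_*(BT^{LT})$ with a shift of $H_*(BT_+)$; combined with Gorenstein duality for the polynomial ring $R$, this yields a $\fW$-equivariant isomorphism $H_*(BT^{LT}) \cong H^r_\fm(R)$ (up to grading shift and orientation twist), where $r = \dim T$ and both sides carry the action of $\fW$ induced from the action on $T$ by ring automorphisms.

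For the second step, choose a finite $\fW$-stable subset $\{f_1, \ldots, f_n\} \subset \fm$ whose radical equals $\fm$; for instance take the $\fW$-orbit of any basis of $\fm/\fm^2$, which is a finite-dimensional $\fW$-representation. Form the stable Koszul (\v{C}ech) complex
$$K^\bullet := \bigotimes_{i=1}^n \bigl( R \lra R[f_i^{-1}] \bigr),$$
a finite complex concentrated in degrees $0, 1, \ldots, n$ on which $\fW$ acts by permuting the tensor factors, making $K^\bullet$ into a complex of $R[\fW]$-modules. Its cohomology computes $H^*_\fm(R)$, and since $R$ is Cohen--Macaulay of Krull dimension $r$ this cohomology is concentrated in degree $r$ and agrees with $H^r_\fm(R) \cong H_*(BT^{LT})$.

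Finally, each term $K^k$ is a finite direct sum of localizations $R[f_{i_1}^{-1}, \ldots, f_{i_k}^{-1}]$, and each such localization is the direct limit of the telescope $R \to R \to R \to \cdots$ with transition maps given by multiplication by the product $f_{i_1} \cdots f_{i_k}$. Hence every term of $K^\bullet$ lies in the closure of $\{R\}$ under direct sums and direct limits, as required. The main obstacle is checking that the permutation $\fW$-action on tensor factors really gives chain maps with compatible Koszul signs; this is automatic once $K^\bullet$ is viewed as the tensor product of two-term complexes indexed by the $\fW$-set $\{f_1, \ldots, f_n\}$, and signs are in any case harmless since we work rationally.
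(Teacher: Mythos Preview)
Your proposal is correct and follows essentially the same route as the paper: both identify $H_*(BT^{LT})$ with the top local cohomology $H^r_{\fm}(R)$ of $R=H^*(BT)$, realise this via the stable Koszul complex on a finite $\fW$-stable generating set of $\fm$, and observe that each term is a finite direct sum of localizations of $R$ (hence built from $R$ by sums and direct limits). The paper phrases the $\fW$-equivariance by grouping the summands of $K^n$ into $\fW$-orbits of $n$-tuples and writing each orbit-sum as an induced module $\fW\otimes_{\fV}R[1/\prod g]$, which is exactly your permutation action on tensor factors viewed orbitwise.
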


\begin{remark}
Note that the insertion of the adjoint representation $LT$ is necessary. For example if $T=SO(2)$
is the circle and $W=\mW SO(3)$ is of order 2, $H_*(BT^{LT})$ is a
suspension of the dual of $(c)$, and we have the exact sequence
$$0\lra \Q [c]\lra \Q [c, c^{-1}]\lra (c)^{\vee}\lra 0$$
proving the lemma in this case. On the other hand  the module
$k[c]^{\vee}$ is not in this category since $\theta_*\Psi
(k[c]^{\vee})\not\cong k[c]^{\vee}$.
\end{remark}

\begin{proof}
To start with,  ignore the action of $\fW$.
If we choose a finite set of $\fG$  of generators of the ideal $\fm$ of elements of
$R=H^*(BT)$ of positive codegree, we may form the stable Koszul complex
$K^{\bullet}_{\infty}(\fG)$, with 
$$K^{n}_{\infty}(\fG)=\bigoplus_{\tau \subseteq \fG, |\tau|=n}
R[\frac{1}{\prod_{g \in \tau}g}]. $$
The point of the stable Koszul complex is that it calculates local
cohomology, so that if $T$ is of rank $s$, we have 
$$H^*(
K^{\bullet}_{\infty}(\fG))=H^*_{\fm}(R)=H^s_{\fm}(R)=H_*(BT^{LT}). $$

Now choose $\fG$ so that the construction is
$\fW$-equivariant. Indeed, adding translates as necessary, we choose $\fG$ to be a union of $\fW$-orbits, and group the terms
in $K^{n}_{\infty}(\fG)$ into $\fW$-orbits of $n$-tuples $\tau$. Thus
if the orbit $\cO$ of $\tau$ has isotropy $\fV$ we find
$$K^{\cO}_{\infty}=\bigoplus_{\tau\in \cO}R[\frac{1}{\prod_{i \in
    \tau}g_i}]=\fW \tensor_{\fV} R[\frac{1}{\prod_{g \in
    \tau}g}]$$
\end{proof}

Finally, we argue that we can insert the group ring $\Q [W]$. Indeed,
we are considering the map   
$$\nu_{E\supset F}: \RRa (E)^{(\mW G)_E^e}\tensor_{\RRa
(F)^{(\mW G)_F^e}}N(F)^{(\mW G)_F^e}\lra  
\left[ \RRa (E)\tensor_{\RRa
(F)}N(F)\right]^{(\mW G)_E^e}. $$
We have observed that if $\nu_{E\supset F}$ is an isomorphism for 
$N(F)=\RRa (F)$ then it is also an isomorphism when  $N(F)$
comes from $f_{(K)}^{\mN}(I)$ with $I=H_*(B\mT/K^{L\mT /K})$. We now
show that, similarly, if $\nu_{E\supset F}$ is an isomorphism for
$N(F)=\RRa (F)[W]$ then it is also an isomorphism when $N(F)$ comes 
from $f_{(K)}^{\mN}(I[W])$. For the case $N(F)=\RRa (F)[W]$ let
us  note that $N_G(E)\subseteq N_G(F)$; this gives a map of Weyl
groups $W_G(E)\lra W_G(F)$, and  passing to quotients under their
respective maximal tori, we have an inclusion $W_G(E)/(\mT /K_s)\subseteq
W_G(F)/(\mT /L_t)$ of coset spaces. 

Now for any connected Lie group $\Gamma$ with maximal torus $T$, the
rational Serre spectral sequence
of $\Gamma/T \lra BT \lra B\Gamma$ collapses to give an isomorphism 
$$H^*(BT)\cong H^*(B\Gamma) \tensor H^*(\Gamma /T) $$
of $H^*(B\Gamma )[W]$-modules.  Furthermore the Weyl group acts trivially on the first
factor. For example
$$H^*(B\mT/K_s)=H^*(BW_G^eE) \tensor H^*(N_G^e(E)/\mT), $$
so that when we  invert $\cE_{K_0/K_s}$ we find 
$$\RRa (E)= (\Psi \RRa) (E) \tensor H^*(N_G^e(E)/\mT)$$

Using this we may  identify $\nu_{E\supseteq F}$  as  
\begin{multline*}
\Psi \RRa (E)\tensor_{\Psi \RRa (F)} [\Psi \RRa
(F)\tensor H^*(N_G^e(F)/\mT) [W] ]^{(\mW G)_F^e}\stackrel{\nu_{E\supset F}}\lra  
\left[ \RRa (E)\tensor_{\RRa
(F)}\RRa (F)[W]\right]^{(\mW G)_E^e}\\
=
\left[ \RRa (E)[W]\right]^{(\mW G)_E^e}
=
(\Psi \RRa)(E)\tensor \left[ H^*(N_G^e(E)/\mT) [W]\right]^{(\mW
 G)_E^e}.
\end{multline*}
This compares two free $\Psi \RRa(E)$ modules obtained by tensoring
with the vector spaces
$$ [ H^*(N_G^e(F)/\mT) [W] ]^{(\mW G)_F^e} \mbox{ and }
\left[ H^*(N_G^e(E)/\mT) [W]\right]^{(\mW  G)_E^e}$$
We note that they are both vector spaces of dimension $|W|$ (they are
not isomorphic as {\em graded} vector spaces, but $E\neq F$ so $\RRa (E)$ is 2-periodic and
tensoring gives abstractly  isomorphic $\Psi \RRa (E)$-modules).

Finally, we observe that $\nu $ is obtained from a
$\mW G_E$-equivariant $\RRa (E)$-module map 
$$\nu_{E\supset F}: \RRa (E) \tensor_{\RRa
(F)^{(\mW G)_F^e}}\left[\RRa (F)[W]\right] ^{(\mW G)_F^e}\lra  \RRa (E)\tensor_{\RRa
(F)}\RRa (F)[W] $$
by passage to $\mW G_E^e$-fixed points. This map is surjective since 
$\RRa (E)[W]$ is generated as an  $\RRa (E)[\mW G_E]$-module by 
$\left( \RRa (F) [W]\right)^{\mW G_F^e}$.  Hence $\nu$ is an
isomorphism as required. 
\end{proof}

\subsection{Toral descent from $G$ to $\mN$}
The descent property now follows from the result for arbitrary
modules. 

\begin{cor}
\label{cor:Psitheta}
We have an  adjunction 
$$\adjunction
{\theta_*}{\cA (G,toral)}{\cA (\mT )[\mW G]}{\Psi},  $$
for which the unit is an isomorphism.
\end{cor}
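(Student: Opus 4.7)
The proof is a formal consequence of the work already carried out. The plan is to assemble three pieces: (i) that both functors restrict to the subcategories in question, (ii) that the ambient adjunction descends, and (iii) that the unit isomorphism from the ambient adjunction survives restriction.

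First, by Proposition \ref{prop:theta}, extension of scalars
$$\theta_* : \mbox{$\RRinv$-modules} \lra \mbox{$\mW G$-equivariant-$\RRa$-modules}$$
sends qce, $\cF$-continuous modules to qce, $\cF$-continuous modules, hence restricts to $\theta_* : \cA(G,toral)\lra \cA(\mT)[\mW G]$. Dually, by Proposition \ref{prop:Psi}, the fixed-point functor $\Psi$ sends qce, $\cF$-continuous $\mW G$-equivariant $\RRa$-modules to qce, $\cF$-continuous $\RRinv$-modules (note that this is the hard content already dealt with, relying on the basic injective resolutions of Section \ref{sec:AGtoralinj}), hence restricts to $\Psi : \cA(\mT)[\mW G]\lra \cA(G,toral)$.

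Next, Proposition \ref{prop:Psitheta} produces an adjunction $\theta_*\dashv \Psi$ at the level of all equivariant modules. Since the subcategories $\cA(G,toral)\subseteq \mbox{$\RRinv$-modules}$ and $\cA(\mT)[\mW G]\subseteq \mbox{$\mW G$-equivariant-$\RRa$-modules}$ are full, and both functors preserve these subcategories by the first step, the natural isomorphism
$$\Hom_{\RRinv}(N,\Psi M)\cong \Hom_{\RRa[\mW G]}(\theta_*N, M)$$
restricts to a natural isomorphism for $N\in\cA(G,toral)$ and $M\in\cA(\mT)[\mW G]$, giving the claimed adjunction.

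Finally, the unit $N\lra \Psi\theta_* N$ is an isomorphism by the last clause of Proposition \ref{prop:Psitheta}, valid because $|\mW G|$ is invertible in every ring appearing (we work rationally throughout). No new verification is required at this step beyond observing that the hypothesis applies. The only nontrivial content lies upstream, in Proposition \ref{prop:Psi}; here we are merely packaging the pieces.
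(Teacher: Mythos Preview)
Your proof is correct and follows essentially the same approach as the paper. The paper's own proof is a single sentence invoking Propositions \ref{prop:theta}, \ref{prop:Psi}, and \ref{prop:Psitheta}; you have simply unpacked why those three ingredients combine (fullness of the subcategories, preservation under both functors, and the rational hypothesis for the unit), which is exactly the intended argument.
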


\begin{proof}
In the light of Lemmas \ref{prop:theta} and \ref{prop:Psi}, this is
immediate from Proposition \ref{prop:Psitheta}
\end{proof}

\section{Homological algebra of $\protect \cA (G,toral)$}
\label{sec:AGtoralinj}

In this section we deduce the facts we need about the homological algebra from  $\cA (G, toral)$ from known
properties of $\cA (\mT)$. In particular,  we show it has finite injective
dimension equal to the rank. 


\subsection{Right adjoints to evaluation}
\label{subsec:fK}
  The study of $\cA (\mT)$ in  \cite{tnq1}  shows  that $\cA (\mT)$
  has sufficiently many injectives. Indeed,  it is shown that enough
  injectives can be imported  from module categories using right
  adjoints $f_K^{\mT}$  to evaluation at  subgroups $K$. We will not
  repeat the argument  here in detail, but the idea is to argue by
  induction on the {\em supporting codimension}:
$$\suppcod (M):=\mathrm{min}\{ \dim (\mT /K)\st M(K)\neq 0\}$$
of a nonzero modules $M$.  One may find a map from any module $M\neq 0$ to
  a sum of injectives $f_K^{\mT}(I)$ which is a monomorphism at
  subgroups of  codimension $\suppcod (M)$. The general case can be built up from
  this. Accordingly, it suffices here to  discuss the right adjoints to
  evaluation. 

  The starting point from \cite{tnq1} is that for any closed subgroup
  $K\subseteq \mT$ of codimension $c$, there is a right adjoint $f_K^{\mT}$ to
  evaluation at $K$:
$$\adjunction{eval_K}
{\cA (\mT)_{\suppcod \geq c}}
{\mbox{torsion-$H^*(B\mT/K)$-modules}}
{f_K^{\mT}}.$$

We may combine these phenomena over a $\mW G$-orbit $(K)$. The point
is that the distinct subgroups $K_i$ in the orbit are of the same
codimension in $\mT$ and hence only cotorally related if they are equal. 
$$\adjunction{eval_{(K)}}
{\cA (\mT)_{\suppcod \geq c}}
{\prod_{K'\in (K)}\mbox{torsion-$H^*(B\mT/K')$-modules}}
{f_{(K)}^{\mT}}.$$
This is compatible with the $\mW G$-action. To describe the structure, note that we have an inclusion 
$(K)\lra \Sigma_a(\mT)$ of posets with $\mW G$-action. Because  $(K)$ is
a discrete poset it is reasonable to write $H^*(B\mT/(K))$ for the
restriction of  $\RRa$ to $(K)$. Since $(K)$ is a transitive $\mW
G$-set, 
there is an equivalence
$$\mbox{$H^*(B\mT/(K))[\mW G]$-modules} \simeq  \mbox{$H^*(B\mT/K)[(\mW
G)_K]$-modules}. $$
Thus we have an adjunction
$$\adjunction{eval_{(K)}}
{\cA (\mT)[\mW G]_{\suppcod \geq c}}
{\mbox{torsion-$H^*(B\mT/(K))[\mW G]$-modules}}
{f_{(K)}^{\mN}}.$$
We will generally specify the particular subgroup $K$ and take the
argument of $f_{(K)}^{\mN}$ to be a $H^*(B\mT /K)[(\mW G)_K]$-module. The
right adjoint to evaluation on $\cA (G,toral)$ can now be defined
 in terms of the functor for $\mN$.

\begin{lemma}
The right adjoint to evaluation at $K$ is given 
 by the formula 
$$f^G_{(K)}(M)=\Psi f^{\mN}_{(K)}(\theta_*M),  $$
where $M$ is an $H^*(BW_G^eK)[W_G^dK]$-module. 
We have the commutative diagram
$$\diagram
\cA (\mT)[\mW G]_{\suppcod \geq c} \dto_{\Psi}&
\mbox{torsion-$H^*(B\mT/K)[(\mW G)_K]$-modules}\dto^{\Psi^{\mW W_G^eK}} \ar[l]_-{f^{\mN}_{(K)}} \\
\cA (G, toral)_{\suppcod \geq c} &\mbox{torsion-$H^*(BW_G^eK)[W_G^d(K)]$-modules} .\ar[l]^-{f^G_{(K)}}
\enddiagram$$
\end{lemma}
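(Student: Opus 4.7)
The plan is to derive the formula from a chain of adjunctions and then deduce the commutative diagram from uniqueness of right adjoints.

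First I would check that evaluation intertwines with $\theta_*$: by the termwise definition of $\theta_*$ on diagrams, for any $X\in \cA (G,toral)$ one has
$$eval_{(K)}(\theta_* X) \;=\; \RRa (K)\tensor_{\RRinv (K)}X(K) \;=\; \theta_*(eval_K X),$$
where the $\theta_*$ on the right is module-level extension of scalars along $H^*(BW_G^eK)\hookrightarrow H^*(B\mT /K)$, with the $(\mW G)_K$-action assembled from the $(\mW G)_K^e$-action on $H^*(B\mT /K)$ and the $W_G^dK$-action on $eval_K X$. In other words, $eval_{(K)}\circ \theta_* = \theta_*\circ eval_K$ as functors out of $\cA (G,toral)$.

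Second, I would string together the adjunction $\theta_*\vdash \Psi$ from Corollary \ref{cor:Psitheta}, the adjunction $eval_{(K)}\vdash f^{\mN}_{(K)}$ from Subsection \ref{subsec:fK}, and the module-level adjunction $\theta_*\vdash \Psi^{\mW W_G^eK}$ which is the instance of Proposition \ref{prop:Psitheta} for the group $(\mW G)_K$ with component structure $(\mW G)_K^e = \mW W_G^eK$. This gives
$$\Hom_{\cA (G,toral)}(X,\, \Psi f^{\mN}_{(K)}(\theta_* M)) \;\cong\; \Hom_{\cA (\mT)[\mW G]}(\theta_* X,\, f^{\mN}_{(K)}(\theta_* M))$$
$$\cong\; \Hom(\theta_*(eval_K X),\, \theta_* M) \;\cong\; \Hom(eval_K X,\, \Psi^{\mW W_G^eK}\theta_* M) \;\cong\; \Hom(eval_K X,\, M),$$
where the final isomorphism is the unit of $\theta_*\vdash \Psi^{\mW W_G^eK}$, an isomorphism by Proposition \ref{prop:Psitheta} since $|\mW W_G^eK|$ is invertible in $\Q$. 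This verifies that $\Psi f^{\mN}_{(K)}\theta_*(-)$ is right adjoint to $eval_K$, establishing the formula for $f^G_{(K)}$.

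Finally, the commutative diagram follows from uniqueness of right adjoints: both $\Psi\circ f^{\mN}_{(K)}$ and $f^G_{(K)}\circ \Psi^{\mW W_G^eK}$ are right adjoint to the common composite of left adjoints $\theta_*\circ eval_K = eval_{(K)}\circ \theta_*$, and are therefore canonically naturally isomorphic. The main obstacle is the bookkeeping of the three flavours of action ($(\mW G)_K^e$-, $W_G^dK$-, and full $(\mW G)_K$-actions) at the module level and the check that the evaluation-$\theta_*$ compatibility respects all of these; but since the heavy lifting has already been packaged into Proposition \ref{prop:Psitheta} and Corollary \ref{cor:Psitheta}, no further technical input is required.
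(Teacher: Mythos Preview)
Your proof is correct and follows essentially the same route as the paper's: both compose the adjunctions $\theta_*\vdash\Psi$ and $eval_{(K)}\vdash f^{\mN}_{(K)}$, then use the module-level extension-of-scalars adjunction at $K$ together with the fact that the unit $M\to\Psi^{\mW W_G^eK}\theta_*M$ is an isomorphism. The paper simply writes out the resulting chain of Hom-isomorphisms explicitly rather than naming each step as an adjunction, and it does not separately spell out the commutativity of the square, which you deduce cleanly from uniqueness of right adjoints applied to $eval_{(K)}\circ\theta_*=\theta_*\circ eval_K$.
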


\begin{proof}
We make the calculation
$$\begin{array}{rcl}
\Hom_{\cA (G,toral)} (X, f^G_{(K)}(M)) &=&\Hom_{\cA (G,toral)} (X, \Psi f^{\mN}_{(K)}(\theta_*M))\\
&=&\Hom_{\cA (\mT)[\mW G]} (\theta_* X,  f^{\mN}_{(K)}(\theta_*M))\\
&=&\Hom_{H^*(B\mT /K)[\mW G_K]} ((\theta_* X)(K),  \theta_*M)\\
&=&\Hom_{H^*(B\mT /K)} (H^*(B\mT/K)\tensor_{H^*(BW_G^eK)}X(K),  \theta_*M)^{\mW G_K}\\
&=&\Hom_{H^*(BW_G^eK)} (X(K),  H^*(B\mT /K)\tensor_{H^*(BW_G^eK)}M)^{\mW G_K}\\
&=&\Hom_{H^*(BW_G^KK)} (X(K),  M)^{W_G^dK}
\end{array}$$

\end{proof}

\subsection{The category $\cA (\mN , toral)$}
The evaluation functors immediately bring $\cA (\mN , toral)$ under
control. 
 
\begin{lemma}
The abelian category $\cA (\mN ,toral)=\cA (\mT)[\mW ]$ has enough
injectives and is of  injective dimension equal to the rank.
\end{lemma}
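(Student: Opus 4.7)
The plan is to transport the corresponding results for $\cA (\mT)$ proved in \cite{tnq1} through the $\mW G$-equivariant evaluation--right-adjoint adjunction constructed in Subsection \ref{subsec:fK}. Both steps (enough injectives, finite injective dimension equal to the rank) will reduce to their non-equivariant counterparts by exploiting the fact that $|\mW G|$ is invertible in $\Q$.

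For enough injectives I would mimic the inductive argument on supporting codimension used for $\cA (\mT)$. Given a nonzero $M \in \cA (\mT)[\mW G]$ with $\suppcod (M)=c$, choose a $\mW G$-orbit $(K)$ of subgroups of codimension $c$ with $M(K)\ne 0$. Then $M(K)$ is a torsion $H^*(B\mT /K)[(\mW G)_K]$-module; embed it in an injective $J$ of that category. Such injectives exist since rationally the category of torsion $H^*(B\mT /K)[(\mW G)_K]$-modules has enough injectives (for instance by Matlis-duality arguments applied to the Gorenstein ring $H^*(B\mT /K)$, coupled with induced/isotypical decomposition under $(\mW G)_K$). Apply the right adjoint $f_{(K)}^{\mN}$ to obtain an object of $\cA (\mT)[\mW G]$ which is injective (since $eval_{(K)}$ is exact), together with a map $M \lra f_{(K)}^{\mN}(J)$ that is monic at $(K)$ and whose cokernel has strictly larger supporting codimension. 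Iterating yields the embedding into a sum of such injectives and then an injective resolution.

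For the injective dimension, the plan is to combine the bound $\injdim \cA (\mT) = \rank \mT = \rank G$ from \cite{tnq1} with a rational descent for $\Ext$. Concretely, the forgetful functor $U \colon \cA (\mT)[\mW G] \lra \cA (\mT)$ has an exact right adjoint (averaging/induction), and the natural map
$$\Ext^{*}_{\cA (\mT)[\mW G]}(X,Y) \lra \Ext^{*}_{\cA (\mT)}(UX,UY)^{\mW G}$$
is an isomorphism: the Lyndon--Hochschild--Serre style spectral sequence for the composite $(-)^{\mW G} \circ \Hom_{\cA (\mT)}(X,-)$ collapses because $|\mW G|$ is a unit, making $(-)^{\mW G}$ exact on rational $\Q[\mW G]$-modules. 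Hence $\Ext^{n+1}_{\cA (\mT)} = 0$ for $n = \rank G$ forces the same vanishing after passing to $\mW G$-invariants, giving the upper bound $\injdim \cA (\mT)[\mW G]\le \rank G$. The lower bound comes by embedding an object of $\cA (\mT)$ realising $\Ext^{\rank G}\ne 0$ into $\cA (\mT)[\mW G]$ with trivial $\mW G$-action; the above isomorphism shows the same $\Ext$ group is nonzero in the equivariant category.

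The main obstacle is verifying the $\Ext$-comparison isomorphism cleanly: one needs that injective resolutions of $Y$ in $\cA (\mT)[\mW G]$ can be assumed $\mW G$-equivariant (which is afforded by Step 1) and that applying $\Hom_{\cA (\mT)}(UX,-)$ followed by $(-)^{\mW G}$ agrees with $\Hom_{\cA (\mT)[\mW G]}(X,-)$, the latter reducing to the standard fact that in characteristic zero, equivariant $\Ext$ is the invariants of the ordinary $\Ext$. Once this is in place, the theorem follows at once.
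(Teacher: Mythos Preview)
Your treatment of enough injectives and of the upper bound on injective dimension is correct and matches the paper's proof: both import injectives from torsion $H^*(B\mT/K)[(\mW G)_K]$-modules via $f_{(K)}^{\mN}$, and both use the identity $\Hom_{\cA(\mT)[\mW G]}(M,N)=\Hom_{\cA(\mT)}(M,N)^{\mW G}$ together with exactness of $(-)^{\mW G}$ over $\Q$ to bound the injective dimension above by that of $\cA(\mT)$.

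The gap is in your lower bound. You equip objects $M,N$ realising $\Ext^r_{\cA(\mT)}(M,N)\neq 0$ with trivial $\mW G$-action and appeal to your comparison isomorphism; but that isomorphism gives
\[
\Ext^r_{\cA(\mT)[\mW G]}(M,N)\cong \Ext^r_{\cA(\mT)}(M,N)^{\mW G},
\]
and taking invariants can annihilate a nonzero group. Concretely, take $M=N=\Q$ supported at the trivial subgroup. The Koszul resolution shows $\Ext^r_{H^*(B\mT)}(\Q,\Q)\cong \Lambda^r(H^2(B\mT))$ is the determinant character of $\mW G$ on $\mathfrak{t}^*$; whenever $\mW G$ contains a reflection (e.g.\ $G=SO(3)$, where $\Ext^1_{\Q[c]}(\Q,\Q)$ is the sign representation of $C_2$) this has no nonzero invariants. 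So ``trivial action'' does not do the job. There is also the prior issue that an arbitrary object of $\cA(\mT)$ need not admit a trivial $\mW G$-structure at all, since $\mW G$ permutes the indexing subgroups.

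The paper avoids this by using coinduced modules: for $M\in\cA(\mT)$ form $\mW G\otimes M\in\cA(\mT)[\mW G]$ and apply Frobenius reciprocity,
\[
\Ext^*_{\cA(\mT)[\mW G]}(\mW G\otimes M,\,N)\cong \Ext^*_{\cA(\mT)}(M,UN),
\]
with no passage to invariants. Taking $N$ coinduced as well exhibits $\Ext^r_{\cA(\mT)}(M,N')$ as a direct summand, so nonvanishing transfers. Replacing your ``trivial action'' step with this coinduction argument repairs the proof.
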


\begin{proof}
In the category of  $H^*(B\mT/K)[(\mW
G)_K]$-modules, any torsion injective embeds in  
$$\Hom_{\Q}(\Q [(\mW G)_K], H_*(B\mT /K))=(H^*(B\mT /K)[(\mW
G)_K])^{\vee}. $$
Applying $f_{(K)}^{\mN}$ we obtain enough injectives in $\cA (\mT
)[\mW G]$. 

Since 
$$\Hom_{\cA (\mT)[\mW G]}(M,N)=\Hom_{\cA (\mT)}(M,N)^{\mW G}$$
and passage to fixed points is exact, it follows that the injective
dimension of $\cA (\mT)[\mW G]$ is no more than that of $\cA (\mT)$. 
The case of coinduced modules shows they are equal. 
\end{proof}

\subsection{The category  $\protect \cA (G, toral)$}
The properties we want for $\cA (G,toral)$ itself can now be deduced
formally from what we have proved for $\cA (\mN , toral)$.

\begin{prop}
\label{prop:AGtoralinj}
The abelian category $\cA (G,toral)$ has enough injectives and is of injective dimension equal to
the rank of $G$.  
\end{prop}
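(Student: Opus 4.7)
The plan is to transfer both conclusions from $\cA (\mN, toral) = \cA (\mT)[\mW G]$ (handled in the preceding lemma) through the descent adjunction $\theta_* \dashv \Psi$ of Corollary \ref{cor:Psitheta}. I will rely on three properties: the functor $\theta_*$ is exact, since each $\RRa(F)$ is free over $\RRinv(F) = \RRa(F)^{(\mW G)_F^e}$ in characteristic zero (Lemma \ref{lem:RRG}); the functor $\Psi$ is exact, being termwise passage to fixed points under a finite group action in characteristic zero; and the unit $\eta_M \colon M \to \Psi\theta_* M$ is an isomorphism.

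For ``enough injectives'', given a nonzero $M \in \cA (G, toral)$, I would embed $\theta_* M \hookrightarrow J$ into an injective object of $\cA (\mT)[\mW G]$ provided by the preceding lemma. Applying $\Psi$ and pre-composing with $\eta_M^{-1}$ yields a monomorphism $M \hookrightarrow \Psi J$, and $\Psi J$ is injective in $\cA (G,toral)$ because $\Psi$ is right adjoint to the exact functor $\theta_*$.

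For the upper bound on injective dimension, writing $r = \rank(G) = \rank(\mT)$, the plan is to take an injective resolution $0 \to \theta_* M \to J^0 \to \cdots \to J^r \to 0$ of length at most $r$ in $\cA (\mT)[\mW G]$, apply the exact functor $\Psi$ (which also preserves injectives, by the adjointness with the exact $\theta_*$), and use $\Psi\theta_* M \cong M$ to obtain an injective resolution of $M$ of length at most $r$ in $\cA (G, toral)$.

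The main obstacle will be the matching lower bound. Because $\eta$ is an isomorphism, $\theta_*$ is a fully faithful exact embedding, so for any $X, Y \in \cA (G, toral)$,
$$\Ext^{*}_{\cA (G, toral)}(X, Y) \cong \Ext^{*}_{\cA (\mT)[\mW G]}(\theta_* X, \theta_* Y),$$
and the task reduces to exhibiting objects in the essential image of $\theta_*$ realizing $\Ext^r \neq 0$. The natural candidates come from the right adjoints $f^G_{(K)}$ of Subsection \ref{subsec:fK}: taking $K = 1$, so that $H^*(BW_G^e(1)) = H^*(BG_e)$ is polynomial of Krull dimension $r$, a derived version of the adjunction there should yield an isomorphism of the form
$$\Ext^{*}_{\cA (G, toral)}(X, f^G_{(1)}(N)) \cong \Ext^{*}_{H^*(BG_e)}(X(1), N)^{G_d},$$
reducing the required nonvanishing to a standard depth computation over a polynomial ring, in direct parallel with the coinduced-module argument used in the $\cA (\mT)[\mW G]$ case.
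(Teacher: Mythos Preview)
Your proposal is correct and follows essentially the same route as the paper: both transfer enough injectives and the upper bound from $\cA(\mT)[\mW G]$ via the adjunction $\theta_*\dashv\Psi$, using exactness of $\theta_*$ (so $\Psi$ preserves injectives), exactness of $\Psi$, and the unit isomorphism. For the lower bound the paper simply points to free spectra---torsion modules over the polynomial ring $H^*(BG_e)$, i.e.\ the module $\Q[G_d]$---which is exactly your $K=1$ case via $f^G_{(1)}$; your detour through the $\Ext$ comparison with $\cA(\mT)[\mW G]$ is valid but unnecessary, since your final argument works directly in $\cA(G,\mathrm{toral})$.
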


\begin{proof}
Since we are working over the rationals, $H^*(B\mT/K)$ is free over
$H^*(BW_G^eK)$ and $\theta_*$ is exact.  The right adjoint $\Psi$
therefore preserves injectives, and  $\Psi I$ is injective in $\cA (G,
toral)$ for every injective $I$ in $\cA (\mT )[\mW G]$. Consequently, 
if we apply $\Psi$ to an injective resolution of $M$ we obtain an
injective resolution of $\Psi M$. Since the unit of the adjunction is
an isomorphism (Proposition \ref{prop:Psitheta} and Corollary \ref{cor:Psitheta}), all objects of $\cA (G, toral)$ are in the image of
$\Psi$ and there are enough injectives in $\cA (G,toral)$. 

Since  $\cA (\mT)$ is of  injective dimension is $r$ \cite{tnq2}
it follows that  $\cA (G, toral)$ is of injective dimension
$\leq r$. To see that this bound is achieved, we may consider free
spectra (which is to say torsion modules over the polynomial ring
$H^*(BG_e)$ on $r$ generators), or more specifically $G_+$ (which is
to say the torsion module $\Q [G_d]$).
\end{proof}

\part{Topology}
\section{Toral detection}
\label{sec:toraldetect}

We show that the toral part of $G$-spectra is detected in
$\mT$-equivariant homotopy. This is the key result that makes this
entire approach viable.

\subsection{Idempotents}
Underlying the structure of any monoidal category is the endomorphism
ring of the unit object, which in  our case is the ring of stable homotopy
groups of $S^0$. Accordingly,   we recall how the Burnside ring $A(G)=[S^0,S^0]^G$ is related to
spaces of subgroups. Given a stable map $f: S^0\lra S^0$, the degree
of geometric fixed points defines a function $\deg (f): \cF (G) \lra
\Z$ from the set $\cF (G)$ of
subgroups of $G$ with finite index in their normalizers. It is clearly
constant on conjugacy classes, and one may show that $\deg (f)$ is
continuous in the Hausdorff metric topology. It was shown by tom Dieck
\cite{tD} that the
map
$$A(G)\lra C_G(\cF (G), \Z)$$
is injective and that it a rational isomorphism. Furthermore  
$C_G(\cF (G), \Z) \tensor \Q \cong C_G(\cF (G), \Q)$.
Finally, it is easy to deduce the degree  of the geometric fixed
points under any subgroup:  if $K$ is not of finite index in its normalizer then $\deg
(f^K)=\deg (f^H)$ whenever $K$ is cotoral in $H$.

Next we note that the conjugacy class of maximal tori is open and
closed in $\cF (G)$, so there is an idempotent $e_{\mT} \in A(G)$ with
support on $(\mT)$ and the degree of its $K$-fixed points is 1 for
subgroups of a maximal torus and 0 otherwise.

We may then localize with respect to $e_{\mT}S^0$, and obtain 
$$\mbox{toral-$G$-spectra}=e_{\mT}\left[ \mbox{$G$-spectra}\right].$$

\begin{lemma}
\label{lem:etoralp}
Writing $\Lambda (\mT)$ for the family of subgroups of some maximal
torus, the natural map $\etoralp \lra S^0$ induces an equivalence
$\etoralp \simeq
e_{\mT} S^0$.
\end{lemma}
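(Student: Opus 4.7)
The plan is to invoke rational geometric fixed point detection: a map of rational $G$-spectra is an equivalence if and only if $\Phi^H$ of the map is an equivalence for every closed subgroup $H \leq G$. I will compute $\Phi^H$ of both sides, construct a comparison map lifting the natural $\etoralp \to S^0$, and then check the induced maps on geometric fixed points.

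First, $\Phi^H(\etoralp)$ is determined by the universal property of $E\toral$: since $(E\toral)^H$ is contractible when $H \in \toral$ and empty otherwise, $\Phi^H(\etoralp) \simeq S^0$ for $H$ subconjugate to $\mT$ and $\Phi^H(\etoralp) \simeq \ast$ otherwise. For $\Phi^H(e_{\mT}S^0)$ I use the splitting $S^0 \simeq e_{\mT}S^0 \vee (1-e_{\mT})S^0$ together with the additivity and strong monoidality of $\Phi^H$, which exhibit $\Phi^H(e_{\mT}S^0)$ as the rational summand of $\Phi^H(S^0)\simeq S^0$ cut out by the idempotent $\Phi^H(e_{\mT})$; its degree is the value of the continuous function $K \mapsto \deg(e_{\mT}^K)$ at $H$. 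For $H \in \cF(G)$ this is $1$ precisely when $H$ is conjugate to $\mT$; for general $H$ the cotoral extension principle recalled from tom Dieck gives $\deg(e_{\mT}^H)=\deg(e_{\mT}^K)$ for any $K \supseteq H$ with $K/H$ a torus and $K \in \cF(G)$. If $H \in \toral$ one takes $K$ to be a conjugate of $\mT$ containing $H$, yielding $1$; if $H \notin \toral$ no such $K$ lies in the class $(\mT)$, yielding $0$. So $\Phi^H(e_{\mT}S^0)\simeq S^0$ for $H \in \toral$ and $\ast$ otherwise.

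To produce the comparison map, decompose the natural $\alpha:\etoralp\to S^0$ under $1=e_{\mT}+(1-e_{\mT})$. The $(1-e_{\mT})$-component factors through $\etoralp \wedge (1-e_{\mT})S^0$, whose $\Phi^H$ vanish identically (by strong monoidality and the two computations above: when $H \in \toral$ the second factor is trivial, when $H \notin \toral$ the first factor is trivial), so this component is null. Thus $\alpha$ factors as $\etoralp \stackrel{\tilde\alpha}\lra e_{\mT}S^0 \hookrightarrow S^0$. Applying $\Phi^H$: when $H \notin \toral$ both source and target of $\tilde\alpha$ are trivial; when $H \in \toral$, $\Phi^H(\alpha)$ is the map $(E\toral)^H_+ \to S^0$ sending the contractible fixed-point space to the non-basepoint, visibly an equivalence $S^0 \to S^0$, and $\Phi^H$ of the inclusion $e_{\mT}S^0 \hookrightarrow S^0$ is also an equivalence by the previous paragraph, so $\Phi^H(\tilde\alpha)$ is an equivalence. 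Detection then gives $\tilde\alpha:\etoralp\simeq e_{\mT}S^0$.

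The main point requiring care is the identification of $\deg(e_{\mT}^H)$ at subgroups $H \notin \cF(G)$, which depends on the continuity of $\deg(f^{(-)})$ in the Hausdorff topology and the cotoral extension principle from the introduction; once this is in hand, the remainder is a direct application of geometric fixed point detection.
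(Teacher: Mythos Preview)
Your proof is correct and follows essentially the same approach as the paper: compute the geometric fixed points of both sides and invoke the Geometric Fixed Point Whitehead Theorem. The paper's own proof is a single sentence relying on the preceding paragraph for the computation of $\deg(e_{\mT}^K)$; you have spelled out that computation and the construction of the comparison map in considerably more detail (in particular, showing the $(1-e_{\mT})$-component of $\alpha$ is null is more than is strictly needed---composing $\alpha$ with the projection $S^0\to e_{\mT}S^0$ would suffice---but it is certainly correct).
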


\begin{proof}
By definition the $K$-fixed point space of $\etoralp$ is equivalent to $S^0$ if
$K$ lies in a maximal torus and is a point otherwise. The map is
therefore an equivalence in geometric $K$-fixed points for all $K$ and
hence a weak equivalence.  
\end{proof}

\begin{cor}
\label{cor:etoralp}
$$[\etoralp, \etoralp]^{G}=[S^0,S^0]^{\mT}=\Q$$
which is detected by the degree in homotopy of
geometric $\mT$-fixed points. 
\end{cor}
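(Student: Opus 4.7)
The plan is to combine Lemma~\ref{lem:etoralp} with tom Dieck's rational computation of the Burnside ring recalled in the preceding subsection. By Lemma~\ref{lem:etoralp}, the map $\etoralp\to S^0$ identifies $\etoralp$ with the idempotent summand $e_{\mT}S^0$. Hence
$$[\etoralp,\etoralp]^G \;=\; [e_{\mT}S^0, e_{\mT}S^0]^G \;=\; e_{\mT}\cdot [S^0,S^0]^G \;=\; e_{\mT}\cdot A(G).$$
Working rationally throughout, the isomorphism $A(G)\cong C_G(\cF(G),\Q)$ realises $e_{\mT}$ as the indicator function of the open and closed $G$-orbit $(\mT)\subseteq \cF(G)$, so $e_{\mT}A(G)$ is the ring of $G$-equivariant functions supported on the single conjugacy class of maximal tori, which is a copy of $\Q$.

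Next I handle the $\mT$-side. Since $\mT$ is abelian, every closed subgroup is normal, so $K$ has finite index in $N_{\mT}K=\mT$ precisely when $\mT/K$ is finite; as $\mT$ is connected this forces $K=\mT$. Thus $\cF(\mT)=\{\mT\}$ and tom Dieck gives $[S^0,S^0]^{\mT}=A(\mT)=C_{\mT}(\{\mT\},\Q)=\Q$.

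For the detection statement I observe that restriction $\res^G_{\mT}\colon A(G)\to A(\mT)$ corresponds, under the tom Dieck identification, to restricting the degree function from $\cF(G)$ to $\cF(\mT)=\{\mT\}$, i.e.\ to evaluation at $\mT$. By tom Dieck's formula this value is exactly the degree of the map on geometric $\mT$-fixed points. Since a function in $e_{\mT}A(G)$ is already determined by its value at any maximal torus, the geometric $\mT$-fixed point degree furnishes the claimed isomorphism.

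No serious obstacle is anticipated: the argument is an unwinding of the support of $e_{\mT}$ together with the elementary observation that the only subgroup of a torus of finite index in its normalizer is the torus itself. The only point deserving care is making the restriction-compatibility of the tom Dieck isomorphism explicit, so that the abstract equality $\Q=\Q$ is witnessed by geometric $\mT$-fixed points rather than by some other choice of basis element.
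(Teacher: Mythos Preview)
Your proposal is correct and follows essentially the same approach as the paper: identify $\etoralp$ with $e_{\mT}S^0$ via Lemma~\ref{lem:etoralp} and then read off $e_{\mT}A(G)=\Q$ from tom Dieck's description of the rational Burnside ring. The paper's proof is terser, writing only the chain $[\etoralp,\etoralp]^G=[\etoralp,S^0]^G=[e_{\mT}S^0,S^0]^G=e_{\mT}A(G)=\Q$ and leaving the $\mT$-side computation and the detection statement implicit from the preceding paragraph; your version spells these out, which is fine.
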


\begin{proof}
After Lemma \ref{lem:etoralp}, we see 
$$[\etoralp,
\etoralp]^G=[\etoralp, S^0]^G=[e_{\mT}S^0, S^0]^G=e_{\mT}A(G)=\Q. $$
\end{proof}

\subsection{Toral restriction is faithful}
The key to our strategy is that the restriction from $G$ to a
maximal torus $\mT$ is faithful on toral spectra. 

\begin{prop}
\label{prop:toraldetection}
The forgetful map 
$$[X,Y]^G\lra [X,Y]^{\mT}$$ 
is rationally split injective if $X$ is a $\toral$-spectrum. 
\end{prop}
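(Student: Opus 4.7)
The plan is to construct an explicit rational splitting via the Wirthm\"uller transfer for the inclusion $\mT\subseteq G$. For any closed subgroup $H\subseteq G$, equivariant stable homotopy theory provides a transfer $\mathrm{tr}^G_H\colon [X,Y]^H \lra [X,Y]^G$, and the standard Frobenius identity says that the composite
\[
\mathrm{tr}^G_H\circ \res^G_H\colon [X,Y]^G \lra [X,Y]^G
\]
acts as multiplication by $[G/H]\in A(G)$ through the natural $A(G)$-module structure on $[X,Y]^G$. One checks this at the unit $\mathrm{id}_{S^0}\in A(G)$, where $\mathrm{tr}^G_H(1)=[G/H]$, and then propagates it to $[X,Y]^G$ by the $A(G)$-linearity of the transfer.

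First I would specialise to $H=\mT$. Since $X$ is toral, Lemma \ref{lem:etoralp} gives $X\simeq e_{\mT} X$, so $[X,Y]^G$ is a module over the localised Burnside ring $e_{\mT}A(G)$; multiplication by $[G/\mT]$ therefore agrees with multiplication by $e_{\mT}\cdot [G/\mT]$. By tom Dieck's description of $A(G)\otimes\Q$ as a ring of continuous functions on $\cF G$, the idempotent $e_{\mT}$ is the characteristic function of the single conjugacy class $(\mT)$, so $e_{\mT}A(G)\cong\Q$, detected by $\deg_{\mT}$. A coset $g\mT$ is fixed by the left $\mT$-action iff $g^{-1}\mT g\subseteq \mT$, iff (by maximality of $\mT$) $g\in \mN$, so $(G/\mT)^{\mT}=\mN/\mT=\mW$ and
\[
\deg_{\mT}[G/\mT]=\chi(\mW)=|\mW|.
\]
Hence $e_{\mT}\cdot [G/\mT]=|\mW|\cdot e_{\mT}$, and $\mathrm{tr}^G_{\mT}\circ \res^G_{\mT}$ acts as multiplication by the nonzero rational $|\mW|$ on $[X,Y]^G$. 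Consequently $|\mW|^{-1}\mathrm{tr}^G_{\mT}$ is a rational splitting of $\res^G_{\mT}$.

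The only nontrivial input is the $A(G)$-module identity $\mathrm{tr}^G_H\res^G_H=[G/H]\cdot(-)$. This is classical for finite groups and extends to compact Lie subgroups via the Wirthm\"uller isomorphism; this is the main obstacle in the sense that it must be invoked carefully in the compact-Lie setting, but once it is in hand the remainder of the argument is a short computation in the Burnside ring whose conclusion depends only on the fact that $|\mW G|\neq 0$, which holds uniformly for every compact Lie group $G$.
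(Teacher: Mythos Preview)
Your argument is correct and is, at its core, the same transfer-based splitting the paper uses: both produce a section of $\pi:G/\mT_+\to S^0$ (after applying the toral idempotent) whose composite with $\pi$ is $|\mW|$ on $\mT$-geometric fixed points. The difference is purely in packaging. You invoke the Frobenius identity $\mathrm{tr}^G_{\mT}\circ\res^G_{\mT}=\chi_G(G/\mT)\cdot(-)$ and then compute $e_{\mT}\chi_G(G/\mT)=|\mW|\,e_{\mT}$ in the rational Burnside ring; the paper instead computes the mapping group $[E\Lambda(\mT)_+, G/\mT_+\wedge E\Lambda(\mT)_+]^G$ directly via the Wirthm\"uller isomorphism and the Segal--tom Dieck splitting, showing it is $\Q$ and detected on $\Phi^{\mT}$, and then picks $s$ by hand (the remark after the proof makes explicit that $|\mW|s$ is exactly the Wirthm\"uller transfer). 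Your route is a little shorter because the Frobenius identity absorbs the Wirthm\"uller bookkeeping; the paper's route is more self-contained and, as a by-product, establishes Lemma~\ref{lem:PhiTdetect} (detection of these maps by $\pi_0\Phi^{\mT}$), which is of independent interest in the paper's development.
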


\begin{proof} Under the natural equivalence $[G/\mT_+\sm X, Y]^G=[X,
  Y]^{\mT}$ the forgetful map corresponds to the the projection $\pi: G/\mT
  \lra *$. 

Since $X$ is a $\toral$-spectrum, it is equivalent to $X \sm
\etoralp$, so that a splitting can be obtained from a factorization 
$$\diagram \etoralp &G/\mT_+ \sm \etoralp \lto_(0.58){\pi}\\
&\etoralp \uto_s \ulto^1
\enddiagram$$
It remains to choose a suitable $s$, and we note that Corollary
\ref{cor:etoralp} shows we need only show $s$ is non-trivial in
$\mT$-geometric fixed points. In fact we will show that maps in this
pattern are determined by $\pi_0$ of $\mT$-geometric fixed points. 

\begin{lemma}
\label{lem:PhiTdetect}
For the three
pairs of  spaces $X,Y$ in the above diagram  we have an isomorphism 
$$[X,Y]^G\stackrel{\cong}\lra \Hom_{\Q W}(\pi_0(\Phi^TX) ,
\pi_0(\Phi^TY)).$$
\end{lemma}
\begin{proof}
This is already done for the edges labelled $1, \pi$, so we only need
to deal with the edge labelled $s$ where $X=\etoralp$, $Y=G/\mT_+ \sm \etoralp$.

Write $L$ for the representation of $\mT$ given by the tangent space
to $G/\mT$ at $e\mT$, and note the fact that $\mT$ is a maximal
abelian connected subgroup shows that $L^{\mT}=0$. 
The Wirthm\"uller adjunction gives isomorphisms
\begin{multline*}
[\etoralp, G/T_+\sm \etoralp]^G =
[\etoralp, F_T(G_+, S^L\sm \etoralp)]^G \\
=[\etoralp, S^L\sm \etoralp]^{\mT}
=[S^0, S^L]^{\mT} =\Q. 
\end{multline*}
The last isomorphism follows from the Segal-tom Dieck splitting,
since the only subgroup $K$  of $\mT$ with finite index in its
normalizer is $\mT$ itself. 
Following through the adjunctions, the composite isomorphism is given
by forgetting from $G$ to $\mT$ and  composing with the $\mT$-map
$$G/T_+\sm \etoralp \lra S^L\sm \etoralp ; $$
induced by the Pontrjagin-Thom map $G/T\lra S^L$.
It follows that maps are detected by degree in $\mT$-geometric fixed points.  
\end{proof}

According to Lemma \ref{lem:PhiTdetect},  we need only consider 
$$\diagram 
\Q &\Q \mW G \lto_{\pi_*}\\
&\Q \ulto^1 \uto_{s_*}
\enddiagram$$
and select  $s$ so that $|\mW G| s_*$ is the norm map.
\end{proof}

\begin{remark}
To be more specific, we can take $|\mW G|s$ to be the composite
$$\etoralp \lra F_{\mT}(G_+, \etoralp) \stackrel{F_{\mT}(G_+,
  i_L)}\lra F_{\mT}(G_+, S^L\sm \etoralp)\simeq G/\mT_+ \sm
\etoralp,  $$
where the first map is the adjunct of the identity and the last is the
standard Wirthm\"uller equivalence. 
\end{remark}

\section{Borel cohomology and the associated homology theory}
\label{sec:Borel}

\subsection{Classical isomorphisms}
The essential ingredients in the proof that $\cA (G,toral)$ provides
an effective invariant are classical facts about the
cohomology of the Borel construction. We will need to apply the results to
$W_G^eK$ for various subgroups $K$ of $G$, so in this section we take
$\Gamma$ to be a  compact Lie group with maximal torus $\mT$, 
$\mN =N_{\Gamma}\mT$ and Weyl group $\mW \Gamma =N_{\Gamma}\mT/\mT$.

\begin{lemma} 
\label{lem:class}
  If $Z$ is a free $\Gamma$-space then we have natural isomorphisms

(i) $H^*(Z/\mN )\cong H^*(Z/T)^{\mW \Gamma} $ 

(ii) $H^*(Z/\Gamma )\cong H^*(Z/\mN ) $

(iii) $H^*(Z/\Gamma)\cong H^*(Z/T)^{\mW \Gamma}$ and 

(iv) If $\Gamma$ is connected, there is a natural isomorphism
$$H^*(BT)\tensor_{H^*(B\Gamma)} H^*(Z/\Gamma)\stackrel{\cong}
\lra H^*(Z/T). $$
\end{lemma}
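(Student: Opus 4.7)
The plan is to deduce all four statements from two classical inputs: that $H^*(\Gamma/T;\Q)$ is the regular representation of $\mW\Gamma$ as a $\mW\Gamma$-module (so $H^*(\Gamma/\mN;\Q)=\Q$ by taking invariants), and Borel's theorem that for connected $\Gamma$ the ring $H^*(BT)$ is a free $H^*(B\Gamma)$-module of rank $|\mW\Gamma|$.

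For (i), since $Z$ is $\Gamma$-free, $\mW\Gamma=\mN/T$ acts freely on $Z/T$ with quotient $Z/\mN$. Over the rationals and with $\mW\Gamma$ finite, the averaging transfer (or equivalently the collapsed Cartan--Leray spectral sequence) identifies $H^*(Z/\mN)$ with $H^*(Z/T)^{\mW\Gamma}$. For (ii), I would apply the Serre spectral sequence to the fibration $\Gamma/\mN \to Z/\mN \to Z/\Gamma$; the fiber is rationally acyclic by the opening observation, so the spectral sequence collapses onto a single column and yields $H^*(Z/\mN)\cong H^*(Z/\Gamma)$. Part (iii) is then immediate by composing (i) and (ii).

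For (iv), I would classify the principal $\Gamma$-bundle $Z\to Z/\Gamma$ to obtain a pullback square
$$\diagram
Z/T \rto \dto & BT \dto \\
Z/\Gamma \rto & B\Gamma
\enddiagram$$
By Borel's theorem the Serre spectral sequence of $\Gamma/T\to BT\to B\Gamma$ collapses, and by naturality the same holds for the pullback fibration $\Gamma/T\to Z/T\to Z/\Gamma$. Cup product with lifts of the generators of $H^*(BT)$ then defines a natural map
$$H^*(BT)\tensor_{H^*(B\Gamma)} H^*(Z/\Gamma)\lra H^*(Z/T),$$
whose domain is free over $H^*(Z/\Gamma)$ of rank $|\mW\Gamma|$, as is the target by the collapse; a basis comparison forces the map to be an isomorphism.

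The main obstacle is verifying that the natural map in (iv) really respects the chosen bases on the two sides. The cleanest route is via an Eilenberg--Moore spectral sequence for the pullback, using flatness of $H^*(BT)$ over $H^*(B\Gamma)$ to collapse it at $E_2$; this requires mild hypotheses on $Z/\Gamma$ to ensure convergence. Alternatively one can argue purely by naturality from the universal case $Z=E\Gamma$, where the statement is tautological given freeness, combined with the multiplicative structure of the Serre spectral sequence.
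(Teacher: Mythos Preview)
Your argument for (i)--(iii) matches the paper's: the paper phrases (i) as collapse of the Serre spectral sequence of $Z/T\to Z/\mN\to B\mW\Gamma$, which is exactly your Cartan--Leray/transfer statement since the action is free and the group order is invertible; (ii) is the same fibration argument with $\Gamma/\mN$ rationally acyclic; (iii) is the composite.

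For (iv) you and the paper diverge. The paper goes straight to the Eilenberg--Moore spectral sequence of the pullback square, using that $B\Gamma$ is simply connected (from connectedness of $\Gamma$) for convergence, and that $H^*(BT)$ is free over $H^*(B\Gamma)$ for collapse at $E_2$. Your primary route via the Serre spectral sequence of $\Gamma/T\to Z/T\to Z/\Gamma$ plus a Leray--Hirsch argument is also valid: since $\Gamma$ is connected the local coefficients are trivial (the monodromy factors through $\pi_1(B\Gamma)=1$), naturality from the universal fibration $\Gamma/T\to BT\to B\Gamma$ kills all differentials, and the pullback of a $H^*(B\Gamma)$-basis of $H^*(BT)$ restricts to a basis of $H^*(\Gamma/T)$, giving the isomorphism. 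Either approach works; the Eilenberg--Moore route is shorter and avoids the basis bookkeeping you flagged, while your Serre argument is more hands-on and makes the $H^*(Z/\Gamma)$-module structure explicit. One small correction: the convergence hypothesis for Eilenberg--Moore lives on the base $B\Gamma$ (simple connectivity), not on $Z/\Gamma$ as you wrote.
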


\begin{proof}
It suffices to treat the unbased case. 

Part (i) follows since the Serre spectral sequence $Z/T \lra Z/\mN
\lra B\mW \Gamma$ collapses when the group order is invertible. 

Part (ii) follows from the Serre spectral sequence of 
$\Gamma /\mN \lra X/\mN \lra X/\Gamma $, since $\Gamma /\mN$ is
rationally contractible. 

Part (iii) follows by combining Parts (i) and (ii). 

Part (iv) follows from the Eilenberg-Moore spectral sequence of the
pullback square 
$$\diagram 
Z/\mT \rto \dto & B\mT \dto \\
Z/\Gamma  \rto & B\Gamma
\enddiagram$$
We note that connectedness of $\Gamma$ ensures $B\Gamma$ is
1-connected, and working over $\Q$ ensures that $H^*(B\mT )$ is free
over $H^*(B\Gamma)$.
\end{proof}

\begin{cor}
\label{cor:NGamma}
For any $N$ spectrum $B$, the map $i: B\lra \Gamma_+ \sm_{\mN} B$ induces an
isomorphism in $H^*_{\mN}$.
\end{cor}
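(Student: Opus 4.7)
The plan is to exhibit $H^*_{\mN}(i)$ as the composite of two well-known isomorphisms, both consequences of Lemma \ref{lem:class}. First, since any free $\Gamma$-space restricts to a free $\mN$-space, $E\Gamma$ is a valid model for $E\mN$, and together with the standard associativity/Frobenius identity $E\Gamma_+ \wedge_{\mN} B \simeq E\Gamma_+ \wedge_{\Gamma}(\Gamma_+ \wedge_{\mN} B)$ this furnishes the induction change-of-groups isomorphism $H^*_{\mN}(B) \cong H^*_{\Gamma}(\Gamma_+ \wedge_{\mN} B)$. Second, applying the spectrum analogue of Lemma \ref{lem:class}(ii) to the $\Gamma$-spectrum $Y = \Gamma_+ \wedge_{\mN} B$ yields $H^*_{\Gamma}(\Gamma_+ \wedge_{\mN} B) \cong H^*_{\mN}(\Gamma_+ \wedge_{\mN} B)$. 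Composing, one obtains an isomorphism $H^*_{\mN}(\Gamma_+ \wedge_{\mN} B) \cong H^*_{\mN}(B)$.

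It then remains to check that this composite equals $i^*$. This is a naturality verification: the map $E\Gamma_+ \wedge_{\mN} i$ from $E\Gamma_+ \wedge_{\mN} B$ into $E\Gamma_+ \wedge_{\mN}(\Gamma_+ \wedge_{\mN} B)$ corresponds, under the Frobenius identification, to the canonical projection $E\Gamma_+ \wedge_{\mN} Y \to E\Gamma_+ \wedge_{\Gamma} Y$ for $Y = \Gamma_+ \wedge_{\mN} B$, and this projection is precisely the comparison map used in Lemma \ref{lem:class}(ii). Thus both factors in the composite arise naturally from $i$, so the composite is $H^*_{\mN}(i)$.

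The only nontrivial ingredient is the spectrum version of Lemma \ref{lem:class}(ii); I expect this to be routine because $\Gamma$-equivariant Borel cohomology of a $\Gamma$-spectrum $Y$ is computed as the ordinary cohomology of $E\Gamma_+ \wedge_{\Gamma} Y$, the comparison map has homotopy fiber built from $\Gamma/\mN_+$, and the identification $\Gamma/\mN = \Gamma_e/N_{\Gamma_e}\mT$ coupled with Borel's theorem $H^*(B\Gamma_e) \cong H^*(BN_{\Gamma_e}\mT)$ ensures this fiber is rationally trivial, so that the relevant rational Serre spectral sequence collapses at $E_2$.
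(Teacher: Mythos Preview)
Your proposal is correct and follows essentially the same route as the paper. Both arguments factor $H^*_{\mN}(i)$ through the induction isomorphism $H^*_{\mN}(B)\cong H^*_{\Gamma}(\Gamma_+\sm_{\mN}B)$ and then use the rational contractibility of $\Gamma/\mN$ (Lemma~\ref{lem:class}(ii)); the only organizational difference is that the paper first reduces to the case $B=Z_+$ so that the space-level lemma applies literally, whereas you work directly at the spectrum level and sketch the required spectrum analogue.
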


\begin{proof}
It suffices to prove the case when $B$ is the suspension spectrum of
$Z_+$ for  an unbased space $Z$. 
It is convenient to view this as the $\Gamma$ Borel cohomology of
$$\Gamma\times_{\mN}Z \lra \Gamma\times_{\mN} \Gamma\times_{\mN}Z\cong
\Gamma/\mN \times \Gamma\times_{\mN}Z. $$
Since the composite with projection is the identity, 
it therefore suffices to observe that by the lemma,  $\Gamma /\mN \lra *$ induces an isomorphism. 
\end{proof}

\subsection{Fixed points and induced spaces}
The purpose of this subsection is to show that the $L$-fixed point
spaces of induced spaces are made up of copies of induced spaces of
Weyl groups. 

More precisely, we suppose $L\subseteq \mT$ and consider its
conjugates inside $\mT$, this consists of the $\mW G$-orbit
of $L$, and we suppose the groups are $L=L_1, L_2, \ldots , L_c$ with
$L_i=L^{\gamma_i}$. In the usual way if $A$ is an $\mT$-space then 
$A^{L_i}=\gamma_i^{-1} (A^L)$. 

\begin{lemma}
\label{lem:fpind}
For a $\mT$-space $A$ we have
$$(G\times_{\mT}A)^L=\coprod_i W_G(L)\gamma_i \times_{\mT
  /L_i}A^{L_i}. $$
\end{lemma}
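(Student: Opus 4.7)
The plan is to exhibit an explicit bijection by first identifying which pairs $(g,a) \in G \times A$ represent $L$-fixed classes in $G\times_\mT A$, organizing them according to the conjugate of $L$ that appears, and then computing the quotient by the defining $\mT$-relation on each piece. A point $[g,a]$ is $L$-fixed iff for every $\ell \in L$ there exists $t \in \mT$ with $\ell g = gt$ and $ta=a$; equivalently $g^{-1}Lg$ is a subgroup of $\mT$ and $a \in A^{g^{-1}Lg}$.

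Since $g^{-1}Lg$ is a subgroup of $\mT$ conjugate to $L$ inside $G$, a standard application of the conjugacy of maximal tori in $Z_G(L)$ shows it must equal one of the $L_i = \gamma_i^{-1}L\gamma_i$, so the $\mW G$-orbit exhausts the possibilities. Unwinding, $g^{-1}Lg = L_i$ is equivalent to $g\gamma_i^{-1} \in N_G(L)$, i.e.\ $g \in N_G(L)\gamma_i = \gamma_i N_G(L_i)$, in which case $a$ must lie in $A^{L_i}$. I would then check that the $\mT$-relation preserves each set $N_G(L)\gamma_i \times A^{L_i}$: for $g = n\gamma_i$ we write $gt = n(\gamma_i t\gamma_i^{-1})\gamma_i$, and the crucial observation is that the conjugate torus $\gamma_i \mT \gamma_i^{-1}$ contains $\gamma_i L_i \gamma_i^{-1} = L$, so being abelian it centralizes $L$; hence $\gamma_i t\gamma_i^{-1} \in Z_G(L) \subseteq N_G(L)$ and $gt \in N_G(L)\gamma_i$ as required. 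Different values of $i$ give disjoint classes (by Step 1), so we get the indicated coproduct decomposition before quotienting.

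On the $i$-th piece I then identify the quotient $(N_G(L)\gamma_i \times A^{L_i})/\mT$ with $W_G(L)\gamma_i \times_{\mT/L_i} A^{L_i}$, where $W_G(L)\gamma_i$ denotes $N_G(L)\gamma_i/L$ (the left $L$-coset space, which is indeed free of $L$ under conjugation and carries a natural right $\mT/L_i$-action). The map sends $[n\gamma_i,a]$ to the class of $([n\gamma_i],a)$. Well-definedness requires two checks: the left-$L$ redundancy in $n$ is absorbed by the balanced product and the fact that $L_i \subseteq \mT$ stabilizes $a$; and the right $\mT/L_i$-action $[n\gamma_i]\cdot tL_i := [n\gamma_i t]$ is well-defined because for $\ell \in L_i$ the element $\gamma_i\ell\gamma_i^{-1} \in L$ commutes with $\gamma_i t\gamma_i^{-1}$ (same centralization argument), which lets me absorb the change $t \mapsto t\ell$ into a left-$L$ translation. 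Injectivity reduces to Step 2 (the $\mT$-conjugate fixes $L_i$, so $i$ is invariant, and the equation $n'\gamma_i = n\gamma_i t$ is exactly the $\mT/L_i$ relation), and surjectivity was already established in Step 1--2.

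The main obstacle is the bookkeeping in the last step: $L$ and $L_i$ are distinct subgroups of $\mT$ related by conjugation by $\gamma_i$, and one must consistently translate between the left-$L$ quotient of $N_G(L)\gamma_i$ and the right-$\mT/L_i$-action. Every nontrivial compatibility rests on the single algebraic fact that $\gamma_i \mT \gamma_i^{-1}$ is an abelian group containing $L$, hence centralizes $L$; keeping this in view makes each verification essentially mechanical.
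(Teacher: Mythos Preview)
Your proposal is correct and follows the same approach as the paper's proof: identify the $L$-fixed condition as $g^{-1}Lg\subseteq\mT$ together with $a\in A^{g^{-1}Lg}$, recognize $g^{-1}Lg$ as one of the $L_i$, and then pass to the quotient. The paper's proof is considerably terser---it simply records the fixed-point condition and writes down the display $\coprod_i N_G(L)\gamma_i \times_{\mT}A^{L_i} \to \coprod_i N_G(L)/L\,\gamma_i \times_{\mT/L_i}A^{L_i}$ without the bookkeeping you supply; your additional verifications (that the $\mT$-relation preserves each piece, and that the right $\mT/L_i$-action on $W_G(L)\gamma_i$ is well defined) are exactly the details the paper is leaving implicit.
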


\begin{proof}
We note that the condition for $[g,a]$ to be $L$-fixed is that for
each $l\in L$ there is a $t\in \mT$ so that  $lg=gt$ and
$t^{-1}a=a$. The first condition determines $t$, so $[g,a]$ is only
fixed if $L^g\subseteq \mT$ and then   $a$ is fixed by $L^g$. Thus we obtain 
$$\coprod_i N_G(L)\gamma_i \times_{\mT }A^{L_i} \lra \coprod_i
N_G(L)/L \gamma_i \times_{\mT/L_i }A^{L_i} $$
as claimed. 
\end{proof}

\begin{cor}
\label{cor:fpNGamma}
For any $T$-space $A$, the map $\mN \times_{\mT}A \lra  G \times_{\mT}A $
induces an isomorphism of $W_GL$-equivariant Borel cohomology of
$L$-fixed points. 
\end{cor}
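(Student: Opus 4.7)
The plan is to reduce the statement to Corollary \ref{cor:NGamma} applied to the smaller compact Lie group $\Gamma := W_GL$, whose maximal torus is $\mT/L$ and whose torus-normalizer is $W_\mN L = \mN\Gamma$ (by Lemma \ref{lem:WGK}).

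First I would apply Lemma \ref{lem:fpind} to both $(G\times_\mT A)^L$ and $(\mN\times_\mT A)^L$. Note that $\mN$ is itself a compact Lie group with maximal torus $\mT$ and Weyl group $\mN/\mT = \mW G$ (Lemma \ref{lem:NGNGe}), so the two disjoint-union decompositions are indexed by the same $\mW G$-orbit $\{L_1,\ldots,L_c\}$ of $L$ inside $\mT$, yielding
$$(G\times_\mT A)^L = \coprod_i W_G(L)\gamma_i \times_{\mT/L_i} A^{L_i}, \qquad (\mN\times_\mT A)^L = \coprod_i W_\mN(L)\gamma_i \times_{\mT/L_i} A^{L_i},$$
with the natural map being induced componentwise by the inclusion $W_\mN(L)\hookrightarrow W_G(L)$.

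Next I would identify each $i$-th $G$-component as an induction from the corresponding $\mN$-component,
$$W_G(L)\gamma_i \times_{\mT/L_i} A^{L_i} \;\cong\; W_G(L)\times_{W_\mN(L)}\bigl(W_\mN(L)\gamma_i\times_{\mT/L_i}A^{L_i}\bigr),$$
which follows from the identity $W_G(L) \cong W_G(L)\times_{W_\mN(L)}W_\mN(L)$ as left $W_G(L)$-sets, together with the fact that the right $\mT/L_i$-action commutes with the induction (well-defined because $\mT\subseteq N_GL$, since $\mT$ is abelian so that $\mT$ centralizes $L\subseteq \mT$). With this identification in hand, Corollary \ref{cor:NGamma} applied to $\Gamma = W_GL$ with $B := W_\mN(L)\gamma_i\times_{\mT/L_i}A^{L_i}_+$ gives exactly the required isomorphism of $H^*_{W_GL}$-Borel cohomologies on each component, and taking coproducts over $i$ completes the proof.

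The main obstacle will be rigorously verifying the identification in the second step: one must carefully track how the left $W_GL$-action on $W_G(L)\gamma_i$ and the right $\mT/L_i$-action interact with the coset representative $\gamma_i$ (which sits in $\mN$ rather than in $N_GL$ in general), so that the componentwise balanced-product structure really matches the induction $W_GL\times_{W_\mN L}(-)$. Once this is in place, the result is immediate from Corollary \ref{cor:NGamma} and the decomposition of Lemma \ref{lem:fpind}.
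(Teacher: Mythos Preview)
Your proposal is correct and follows essentially the same route as the paper: decompose both $L$-fixed point spaces via Lemma~\ref{lem:fpind}, recognise each component of the map as an instance of $B\to \Gamma\times_{\mN\Gamma}B$ with $\Gamma=W_GL$ and $\mN\Gamma=W_{\mN}L$, and invoke Corollary~\ref{cor:NGamma}. The paper is terser---it simply writes the component map as $W_{\mN}(L)\times_{\mT/L}A^L\to W_G(L)\times_{\mT/L}A^L$ and declares it ``an instance of Corollary~\ref{cor:NGamma}''---whereas you spell out the intermediate induction identification $W_G(L)\times_{\mT/L_i}(-)\cong W_G(L)\times_{W_{\mN}(L)}\bigl(W_{\mN}(L)\times_{\mT/L_i}(-)\bigr)$ and flag the bookkeeping with the coset representatives $\gamma_i$; that extra care is harmless and the underlying argument is the same.
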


\begin{proof}
From Lemma \ref{lem:fpind}, we see that the map is a disjoint union of instances
of 
$$W_N(L)\times_{\mT /L}A^L \lra W_G(L)\times_{\mT /L}A^L. $$
This in turn is an instance of Corollary \ref{cor:NGamma} with $\Gamma=W_GL$.
\end{proof}

\subsection{Adjoint representations}
It is extremely interesting to see how the adjoint representation
behaves in moving between $G$ and $\mN$. Alternatively stated, 
this amounts to understanding the adjoint representation in  the Adams 
isomorphism. We write $L G$ for the adjoint representation, which is
to say the tangent space at the identity of $G$ with $G$ acting by
conjugation. For the torus $\mT$ there is also a rational version 
$L_{\Q}\mT=H_1(\mT ; \Q)$, so that there is a natural isomorphism
$L_{\Q}\mT \tensor \R\cong L\mT$. 

\begin{lemma}
\label{lem:suspLie}
We have a natural isomorphism
$H_*^{G}(X\sm S^{LG})=H_*^{\mN }(X\sm S^{L\mT})$. 
\end{lemma}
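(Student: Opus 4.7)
The plan is a two-step reduction: first restrict from $G$ to $\mN$, then trivialise the excess adjoint contribution.

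For the first step, I would prove the homological analogue of Corollary \ref{cor:NGamma}: for any $G$-spectrum $Y$ the natural map $Y \to G_+ \sm_\mN Y$ induces an isomorphism $H_*^G(Y) \cong H_*^\mN(Y)$ rationally. The argument is the same as in the cohomological case: the Borel construction gives a fibration $G/\mN \to EG_+ \sm_\mN Y \to EG_+ \sm_G Y$, and the fibre $G/\mN$ is rationally contractible by Lemma \ref{lem:class}(ii), so the projection is a rational equivalence. Applied to $Y = X \sm S^{LG}$ this gives $H_*^G(X \sm S^{LG}) \cong H_*^\mN(X \sm S^{LG})$.

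For the second step, I decompose $LG$ as an $\mN$-representation. Since $\mN/\mT$ is finite we have $L\mN = L\mT$, whence $LG|_\mN \cong L\mT \oplus L(G/\mN)$, with $L(G/\mN)$ the tangent space at the identity coset (the ``roots'' representation regarded as an $\mN$-rep). Thus $S^{LG} \simeq S^{L\mT} \sm S^{L(G/\mN)}$ as $\mN$-spectra. Since the roots come in pairs $\pm\alpha$, the bundle $L(G/\mN)$ over $B\mN$ is rationally oriented, and its Thom class identifies
$$H_*^\mN(X \sm S^{L\mT} \sm S^{L(G/\mN)}) \cong H_*^\mN(X \sm S^{L\mT}),$$
once the dimension shift is absorbed into the grading convention implicit in attaching a representation sphere.

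The main obstacle is this last step: the Thom-isomorphism dimension shift only cancels cleanly if both sides are interpreted in an $RO$-graded sense (or under the specific integer regrading used in this paper). A cleaner alternative would be to invoke the Wirthm\"uller identification $G_+ \sm_\mN S^{L\mT} \simeq F_\mN(G_+, S^{LG})$, which uses exactly the identity $L\mT \oplus L(G/\mN) = LG$ in $RO(\mN)$, and combine it with the rational equivalence $G/\mN \simeq_\Q \mathrm{pt}$ from Lemma \ref{lem:class}(ii) to match both sides directly as rational Borel homology spectra, thereby bypassing any explicit Thom-class bookkeeping.
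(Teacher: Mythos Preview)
Your Step 1 is correct and is the homological dual of Lemma~\ref{lem:class}(ii); the paper's second proof takes it for granted and concentrates on the other half. The gap is in Step 2. The representation $V=L(G/\mN)=LG/L\mT$ is \emph{not} $\mN$-orientable: since $\mW$ acts on $LG$ through $\mathrm{Ad}(G_e)$ with determinant $+1$, while it acts on $L\mT$ by reflections, one has $\det(w|_V)=\det(w|_{LG})/\det(w|_{L\mT})$, which is $-1$ on every simple reflection. (For $G=SU(2)$: $V\cong\C_\alpha$ as a $\mT$-representation and the Weyl involution acts by complex conjugation.) The ``roots come in $\pm$ pairs'' observation gives $\mT$-orientability, not $\mN$-orientability. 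Even if $V$ were orientable, a Thom isomorphism would shift degrees by $g-t$, whereas the lemma is a degree-preserving statement; there is no grading convention that absorbs this. For $G=SU(2)$ and $X=S^0$ both sides of the lemma are concentrated in degrees $3,7,11,\ldots$, and a Thom shift of $2$ would force a mismatch.

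What is actually needed, and what the paper's second proof supplies, is that the inclusion $S^{L\mT}\hookrightarrow S^{LG}$ induces an isomorphism on $\tilde H^*_{\mN}$ \emph{without} shift. The mechanism is Solomon's lemma: writing $\delta$ for the Thom class of $L\mT$ (on which $\mW$ acts by the sign character) and $\kappa=\prod_{\alpha>0}c(\alpha)$, one has $[H^*(B\mT)\cdot\delta]^{\mW}=H^*(BG)\cdot\delta\kappa$. Thus $\tilde H^*_{\mN}(S^{L\mT})$ and $\tilde H^*_{\mN}(S^{LG})$ are both free of rank one over $H^*(BG)$ on a class of degree $g=\dim G$, and multiplication by the Euler class $\kappa$ matches them. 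This is the substantive input replacing your Thom step. The paper also gives an independent first proof via the Adams isomorphism and Spanier--Whitehead duality, reducing both sides to the Borel-cohomology comparison $H^*_G(DX)\cong H^*_{\mN}(DX)$; your Wirthm\"uller alternative gestures toward this route but would need those same ingredients to be completed.
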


\begin{proof}[by stable equivariant formalism]
If $X$ is a finite free $G$-space then we have natural isomorphisms
$$\begin{array}{rcl}
H_*^{G}(\Sigma^{LG}X) 
&\cong &[S^0, X\sm H]^G_*\\
&\cong &[DX, H]^G_*\\
&\cong &H_G^* (DX)\\
&\cong &H_N^* (DX)\\
&\cong &[DX, H]^N_*\\
&\cong&[S^0, X\sm H]^{\mN}_*\\
&\cong&H_*^{\mN}(\Sigma^{L\mT }X) 
\end{array}$$
The two equivalences changing $X$ to $DX$ come from the formal
properties of duality. Since $X$ is finite and free, $DX$ is free,
giving the isomorphisms with Borel cohomology. The one relating
$G$-equivariant and $N$-equivariant Borel cohomology is Lemma
\ref{lem:class} (ii). The first and last isomorphisms are instances of
the Adams isomorphism. 
\end{proof}

\begin{proof}[by Lie group theory]
We observe directly that  $S^{L\mT}\lra S^{L  G}$ induces an
isomorphism in $H^*_{\mN}$, which is to say that multiplication by the Euler class of
$LG/LT$ is an isomorphism. More precisely, if $g=\dim G, t=\dim \mT$,  we show that 
the horizontals in the following diagram are isomorphisms of the $\mW G$-invariants 
$$\diagram 
\left[ H^*(B\mT)\tensor S^{LT}\right]^{\mW G} \rto \dto_=&\left[
  H^*(B\mT)\tensor S^{LG}\right]^{\mW G} \dto^=\\
\Sigma^t\left[ H^*(B\mT)\tensor H^t (S^{LT}) \right]^{\mW G}\rto &
\Sigma^g\left[ H^*(B\mT)\tensor H^g(S^{LG})\right]^{\mW G}
\enddiagram
$$
given by the multiplication by the product of the Euler classes of the
positive roots. 

We adjoin exterior variables to give a context, writing 
$A(V)=E(\Sigma V)\tensor P(\Sigma^2 V)$ for a vector space $V$.  
For an element $v\in V$ we write $\lambda (v)$ for the
corresponding element of $\Sigma V$ and $c(v)$ for the element of
$\Sigma^2V$. We consider the special case $V=L_{\Q}\mT$, so that 
$H^*(B\mT) =P(\Sigma^2V)$.  Thus
$$H^*_{\mT}(S^{LT}) \subseteq A(V)$$
consists of the $H^*(B\mT)$-submodule generated by $\det (\Sigma
V)$. Choosing an ordered basis $e_1, \ldots , e_r$ of $V$ we may let 
$\delta =\lambda (e_1)\sm \ldots \sm \lambda (e_r)$ be a generator of
$\det (\Sigma V)$. 
Now consider the adjoint representation of $G$ and choose a set $R_+$
of positive roots.  If we take $\kappa =\prod_{\alpha \in
  R_+}c(\alpha)$ then $\delta \kappa$ is the Euler class of $LG$.

The result is now Solomon's Lemma \cite{solomon}, but perhaps it is
illuminating   to sketch the proof in this case. We observe that $\delta \kappa$ is $\mW G $
invariant. Indeed, associated to $R_+$ there is the Weyl chamber on
which the roots are positive and  $\mW G$ is generated by reflections 
$s_{\alpha}$ in the walls of the Weyl chamber. Since $s_{\alpha}$ is a
reflection  $s_{\alpha}\delta =-\delta$.  On the other hand
$s_{\alpha}$  
negates $\alpha$ and permutes the other positive roots
(\cite[4.10]{BtD}). Hence $s_{\alpha}$ fixes $\delta \kappa$.

Since $H^*(BG)=H^*(B\mT )^{\mW G}$ it follows that 
$$H^*_G(S^{LG})=H^*(BG)\cdot \delta \kappa \subseteq
H^*_{\mT}(S^{L\mT})^{\mW G} .$$
Now we argue that any element $\delta f$ of the invariants is divisible by each
$c(\alpha)$. Since $\det (s_{\alpha})=-1$, we find
$f(s_{\alpha}c(v))=-f(c(v))$ for each $v$. Accordingly, for each $v$ in the
reflecting hyperplane $f(c(v))=0$. If we choose a basis consisting of
$\alpha$ together with elements of the reflecting hyperplane we see
$c(\alpha)$ divides $f$. Since any pair of positive roots are linearly
independent, it follows that $f$ is divisible by $\kappa $ as
required. 
\end{proof}

\subsection{The dual of Borel cohomology }
\label{subsec:Borel}
We let $b$ denote the representing $G$-spectrum for Borel cohomology,
so that, by definition,  
$$b_G^*(X)=H^*(EG_+\sm_G X) =[EG_+\sm X , H]_G^*=[X, F(EG_+, H)]_G^*.$$
This shows the representing spectrum is given by 
$$b=F(EG_+, H).$$
The associated homology theory is defined by 
$$b^G_*(X)=[S^0, X \sm b]^G_*. $$
The canonical warning is that this is not homology of the Borel
construction. Instead, we have  
$$b^G_*(X)=\colim_{\alpha}b^G_*(X_{\alpha})$$
where $X$ is the directed colimit of finite subspectra $X_{\alpha}$. 
For finite spectra $Y$ we have
$$b^G_*(Y)=[S^0, Y\sm b]^G_*=[DY, b]_G^*=b_G^*(DY) =H^*(EG_+\sm_G DY)
.$$
\begin{remark}
This calculation can be viewed as one of the motivations for
Borel-Moore homology, according to which  $b^G_*(X)$ would be the Borel-Moore
homology associated to Borel cohomology. However, since the essence of
Borel-Moore homology is really the use of locally finite chains it
would be misleading to call this Borel Borel-Moore homology.
\end{remark}

We will need a standard observation. 
\begin{lemma}
For finite $G$-spectra
$Y$ we have $b \sm Y \simeq *$ if and only if $b\sm DY \simeq *$. 
\end{lemma}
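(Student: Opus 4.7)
The plan is to reformulate both conditions as vanishing of rational Borel cohomology and then deduce the equivalence by rational representation theory. Since $DDY \simeq Y$ for finite $Y$, the two implications are formally symmetric.

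A $G$-spectrum $T$ is contractible iff $\pi^K_*(T) = 0$ for every closed $K \subseteq G$. Applied to $T = b \sm Y$, and using $b|_K \simeq F(EK_+, H)$ so that the identification of this subsection applies with $G$ replaced by $K$,
$$\pi^K_*(b \sm Y) \;=\; b^K_*(Y) \;=\; H^*(EK_+ \sm_K DY;\Q).$$
Symmetrically $\pi^K_*(b \sm DY) = H^*(EK_+ \sm_K Y;\Q)$. The lemma therefore reduces to showing, for each closed $K \subseteq G$ and each finite $K$-spectrum $Y$,
$$H^*(EK_+ \sm_K Y;\Q) = 0 \;\iff\; H^*(EK_+ \sm_K DY;\Q) = 0. \qquad (\star)$$

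For $(\star)$, over $\Q$ and for compact Lie $K$, the Serre spectral sequence for the Borel construction $Z \to EK_+ \sm_K Z \to BK_+$ collapses at $E_2$ (the rational Tate construction vanishes on compact Lie groups, hence so does its effect on the $E_\infty$ page), yielding
$$H^*(EK_+ \sm_K Z;\Q) \;\cong\; \bigl(H^*(BK^e;\Q) \tensor H^*(Z|_e;\Q)\bigr)^{\pi_0 K},$$
where $K^e$ acts trivially on rational cohomology by its connectedness, so only the $\pi_0 K$-action contributes a nontrivial local coefficient system. Applying this to $Z = Y$ and $Z = DY$, and using the rational universal coefficient isomorphism $H^*(DY|_e;\Q) \cong H^*(Y|_e;\Q)^{\vee}$ (up to regrading), the condition $(\star)$ becomes: for any finite-dimensional $\Q[\pi_0 K]$-module $M$, the invariants $(H^*(BK^e;\Q) \tensor M)^{\pi_0 K}$ vanish iff $(H^*(BK^e;\Q) \tensor M^{\vee})^{\pi_0 K}$ do.

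The main obstacle is this last step on $\pi_0 K$-invariants. It follows from the isotypical decomposition under the finite group $W = \pi_0 K$: each side is a direct sum, over irreducible $\Q[W]$-representations $\alpha$, of the pairings $H^*(BK^e)_\alpha \tensor M_{\alpha^*}$ respectively $H^*(BK^e)_\alpha \tensor (M_\alpha)^\vee$. Since every irreducible rational representation of a finite group is self-dual (the character of a rational representation takes values in $\Q \subseteq \R$, so is fixed by complex conjugation, forcing $\alpha \cong \alpha^*$), the involution $\alpha \mapsto \alpha^*$ on irreducibles is the identity, and the two conditions describe the same vanishing pattern on the isotypes of $M$. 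This establishes $(\star)$ and completes the proof.
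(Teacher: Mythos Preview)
Your reduction to the pointwise statement $(\star)$ is fine, but the proof of $(\star)$ fails at the Serre spectral sequence step. The claimed collapse
\[
H^*(EK_+\sm_K Z;\Q)\;\cong\;\bigl(H^*(BK^e;\Q)\otimes H^*(Z|_e;\Q)\bigr)^{\pi_0K}
\]
is false for positive-dimensional $K$. Take $K=S^1$ and $Z=K_+$ with the translation action: then $EK\times_K K\simeq EK$ is contractible, so $H^*_K(K_+)=\Q$, whereas your formula gives $H^*(BS^1)\otimes H^*(S^1)=\Q[c]\otimes\Lambda(x)$, which is infinite-dimensional. The spectral sequence has a nonzero transgression $d_2\colon x\mapsto c$. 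Your parenthetical justification is also incorrect: the rational Tate construction does \emph{not} vanish for positive-dimensional compact Lie groups (for $K=S^1$ one has $\pi_*t_{S^1}(H\Q)\cong\Q[c^{\pm1}]\neq0$); it vanishes only for finite groups.

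The paper's proof avoids all of this with a one-line ring-spectrum argument: since $b$ is a ring $G$-spectrum, every map $f\colon W\to b$ factors as
\[
W\simeq S\sm W\xrightarrow{\eta\sm1} b\sm W\xrightarrow{1\sm f} b\sm b\xrightarrow{\mu} b,
\]
so if $b\sm Y\simeq *$ then $[Y\sm Z,b]=[Z,F(Y,b)]=0$ for every $Z$, whence $F(Y,b)\simeq *$. For finite $Y$ this is $b\sm DY$, and the converse follows by replacing $Y$ with $DY$. No spectral sequences or representation theory are needed.
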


\begin{proof}
Since $b$ is a ring $G$-spectrum it follows that if $b\sm
Y\simeq *$ then $F(Y, b)\simeq *$. 
\end{proof}

Our main use of this homology theory is to formulate approppriate
analogues of Lemma \ref{lem:class}.
 
\begin{lemma}
\label{lem:bGWG}
Suppose $\Gamma$ is a compact Lie group with maximal torus $T$ and 
Weyl group $\mW \Gamma$ and that the order of $\mW \Gamma$ is
invertible in the coefficients. 
For $\Gamma$-spectra $A$,
there is a natural isomorphism
$$b^{\Gamma}_*(A)= \left[ b^{T}_*(A) \right]^{\mW \Gamma}$$
\end{lemma}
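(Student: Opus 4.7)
The plan is to first treat finite $\Gamma$-spectra and then bootstrap to general $A$ by a colimit argument. For a finite $\Gamma$-spectrum $Y$, the discussion just before the statement gives
\[
b^{\Gamma}_*(Y) \;=\; b^*_{\Gamma}(DY) \;=\; H^*(E\Gamma_+ \sm_{\Gamma} DY).
\]
Since $E\Gamma$ is also a contractible free $T$-space (and indeed a free $N_{\Gamma}T$-space), the same computation with $T$ in place of $\Gamma$ yields
\[
b^{T}_*(Y) \;=\; H^*(E\Gamma_+ \sm_{T} DY),
\]
and in this model the $\mW \Gamma$-action on $b^T_*(Y)$ arises from the residual $N_{\Gamma}T/T$-action on $E\Gamma_+ \sm_T DY$. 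The restriction map $b^{\Gamma}_*(Y)\to b^{T}_*(Y)$ is then just induced by the projection $E\Gamma_+\sm_T DY\to E\Gamma_+\sm_{\Gamma} DY$, and lands in the $\mW \Gamma$-invariants.

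The first main step is to establish the stable analogue of Lemma~\ref{lem:class}(iii) for the "free" object $X := E\Gamma_+ \sm DY$, namely
\[
H^*(X/\Gamma) \;\cong\; H^*(X/T)^{\mW \Gamma}.
\]
I would prove this by factoring through the torus-normalizer exactly as in the proof of Lemma~\ref{lem:class}: the fibration $X/T \to X/\mN \to B\mW\Gamma$ has rationally acyclic base after taking invariants (the Serre spectral sequence degenerates because $|\mW\Gamma|$ is invertible), giving $H^*(X/\mN)\cong H^*(X/T)^{\mW\Gamma}$; and the map $X/\mN\to X/\Gamma$ has fiber $\Gamma/\mN$, which is rationally acyclic, so its Serre spectral sequence forces $H^*(X/\Gamma)\cong H^*(X/\mN)$. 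Neither argument actually uses that $X$ is a suspension spectrum of a space, only that it is a "stably free" $\Gamma$-object in the sense that Borel cohomology computes equivariant cohomology; the two spectral sequences exist in that generality.

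The second step is to extend from finite $Y$ to general $A$. By definition
\[
b^{\Gamma}_*(A) \;=\; \colim_{\alpha}\, b^{\Gamma}_*(A_{\alpha}),
\qquad
b^{T}_*(A) \;=\; \colim_{\alpha}\, b^{T}_*(A_{\alpha}),
\]
with $A_\alpha$ ranging over finite subspectra. Passage to $\mW\Gamma$-invariants is exact and commutes with filtered colimits because $|\mW\Gamma|$ is invertible, so the isomorphism for each $A_\alpha$ passes to the colimit. I would also check naturality directly from the description of both sides as (colimits of) $H^*(E\Gamma_+\sm_{-}DA_\alpha)$.

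The main obstacle is the first step: ensuring that the spatial Lemma~\ref{lem:class}(iii) really does pass to spectra in the generality we need. I would address this by observing that both sides of the desired isomorphism are well-defined functors on the stable category of "free" $\Gamma$-spectra, that they are exact and commute with wedges, and that they agree on suspension spectra of free $\Gamma$-CW complexes by Lemma~\ref{lem:class}. Any free $\Gamma$-spectrum is built from such suspension spectra by cofiber sequences and colimits, so the comparison extends uniquely. This is precisely the kind of bookkeeping that the argument for Corollary~\ref{cor:NGamma} already uses.
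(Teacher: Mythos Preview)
Your proposal is correct and follows essentially the same route as the paper: construct the comparison map via restriction, verify it on finite spectra using Lemma~\ref{lem:class}(iii), and pass to general $A$ by filtered colimits (using that taking $\mW\Gamma$-invariants is exact when $|\mW\Gamma|$ is invertible). The paper packages the last two steps more tersely by simply observing that both sides are homology theories preserving filtered colimits, so it suffices to check the transformation is an isomorphism on finite complexes; your explicit ``finite case plus colimit'' argument is the same content unpacked, and your worry about extending Lemma~\ref{lem:class}(iii) from spaces to spectra is handled in the paper implicitly since for finite $Y$ one has $b^{\Gamma}_*(Y)=H^*_{\Gamma}(DY)$ with $DY$ again finite, hence a desuspension of an honest finite complex.
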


\begin{proof}
The forgetful map 
$$[S^0, b\sm A]^{\Gamma}\lra [S^0, b\sm A]^T$$
supplies a natural transformation 
$$b^{\Gamma}_*(A) \lra \left[ b^{T}_*(A) \right]^{\mW \Gamma}. $$
Since the order of $\mW \Gamma$ is invertible, both terms are homology
theories, and both preserve filtered colimits. It is an isomorphism
for finite complexes by Lemma \ref{lem:class} (iii). 
\end{proof}

 \section{The functor from $G$-spectra to $\protect \cA (G,toral)$}
\label{sec:GspectratoAG}

We have built a model of toral $G$-spectra by comparison with the
model for $\mT$-spectra. In this section, we elucidate the
relationship between these two models and thereby construct the functor $\piAG_*$
from $G$-spectra to $\cA (G,toral)$.

\subsection{Equivariance}
We have seen that rational $\mT$-spectra are modelled by
$\cA (\mT)$ and that there is a functor 
$$\piA_*: \mbox{$\mT$-spectra}\lra \cA (\mT)$$
defined by 
$$\piA_*(X)(L)=\pi^{\mT/L}_*(DE\mT /L_+\sm \Phi^LX)$$
and for finite $X$ this is $H^*_{\mT /L}(D\Phi^LX)$. 
The image of restriction from $G$-spectra to $\mT$-spectra has additional structure. To
start  with, we know that $\cA (\mT)$ admits an action of $\mW G$.

\begin{lemma}
\label{lem:piAisequiv}
The image of the composite
$$\mbox{$G$-spectra}\lra \mbox{$\mT$-spectra}\lra \cA (\mT) $$
consists of $\mW G$-equivariant modules and $\mW G$-equivariant maps. 
Accordingly, we have a functor 
$$\piA_*: \mbox{$G$-spectra}\lra \cA (\mT )[\mW G]=\cA (\mN , toral).$$
\end{lemma}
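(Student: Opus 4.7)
The plan is to manufacture the $\mW G$-action from the residual $N_G(\mT)$-equivariance present on any $G$-spectrum after forgetting to $\mT$-spectra. Given $w \in N_G(\mT)$, left multiplication by $w$ does not in general commute with the $\mT$-action on a $G$-spectrum $X$; instead, for $t \in \mT$ one has $w \cdot (t \cdot x) = (wt) \cdot x = c_w(t) \cdot (w \cdot x)$, where $c_w(t) = wtw^{-1}$. Thus, denoting by $w_*X$ the $\mT$-spectrum obtained from $X$ by twisting the action along $c_w$, left multiplication by $w$ yields a genuine $\mT$-equivariant isomorphism $\widetilde{w}_X : X \to w_*X$. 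Since $\mT$ is abelian, $\widetilde{w}_X$ depends only on the coset $w\mT \in \mW G$, and the identity $v_*\widetilde{w}_X \circ \widetilde{v}_X = \widetilde{vw}_X$ (from the group law in $N_G(\mT)$) is exactly the compatibility that defines a $\mW G$-action in the sense of Section \ref{sec:equivdiagrams}.

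Next I will push this action through the formula
$$\piA_*(X)(L) \;=\; \pi^{\mT/L}_*(DE\mT/L_+ \wedge \Phi^L X).$$
For each $L \subseteq \mT$ and $w \in \mW G$, conjugation by $w$ identifies $\mT/L$ with $\mT/L^w$, carries $\Phi^{L}X$ to $\Phi^{L^w}(w_*X)$ which has the same underlying spectrum as $\Phi^{L^w}X$, and acts naturally on $DE\mT/L_+$. Composing these ingredients with $\widetilde{w}_X$ produces a map
$$w_m: \piA_*(X)(L) \lra (w_*\piA_*(X))(L) = \piA_*(X)(L^w),$$
which is the structure map of a $\mW G$-equivariant $\RRa$-module. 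Compatibility with the ring action, i.e., $w_m(\lambda \cdot x) = w_m(\lambda) \cdot w_m(x)$ for $\lambda \in \RRa(L)$, is automatic: the action on $\RRa$ is, by definition, the pullback along $(\overline{r}_{w^{-1}})^*$ of the same conjugation map used to build the module action, so the two arise from a single natural transformation applied to Borel-theoretic constructions.

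The remaining checks are routine. The composition law $(vw)_m = v_m w_m$ and the unital identity $e_m = 1$ are inherited term by term from $\widetilde{w}_X$. Extension of the $\mW G$-action from singleton flags to general flags $F = (K_0 \supset \cdots \supset K_s)$ is forced by the quasi-coherent and extended conditions on $\piA_*(X)$, and is consistent with the term-by-term $\mW G$-action on flags $F^w = (K_0^w \supset \cdots \supset K_s^w)$. Naturality in $X$ is clear: any $G$-equivariant map $f: X \to Y$ commutes strictly with the action of $N_G(\mT) \subseteq G$, hence $f_*$ commutes with $w_m$ for all $w$. Combining these observations yields the desired lift
$$\piA_* : \mbox{$G$-spectra} \lra \cA(\mT)[\mW G] = \cA(\mN, toral),$$
the final equality being Lemma \ref{lem:ANtoral}.

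The main obstacle, if any, is bookkeeping around conventions: one must keep straight the direction of the conjugation action on subgroups ($L \leftrightarrow L^w = w^{-1}Lw$) so that the $\mW G$-equivariance of the module matches the $\mW G$-equivariance of $\RRa$ as fixed in Section \ref{sec:Liediagrams}. Once the convention is pinned down, every clause of the statement reduces either to naturality of a standard construction (geometric fixed points, Borel cohomology, $DE\mT/L_+$) in the group $\mT$ and the $\mT$-spectrum $X$, or to the elementary fact that $N_G(\mT)/\mT = \mW G$ acts on the set of closed subgroups of $\mT$ by conjugation.
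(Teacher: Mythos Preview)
Your proposal is correct and follows essentially the same approach as the paper's proof: the $\mW G$-action is constructed from conjugation by elements of $N_G(\mT)$, which gives group homomorphisms $L\to L^w$ and equivariant homeomorphisms between the spaces appearing in the definition of $\piA_*$, with the final identification coming from Lemma~\ref{lem:ANtoral}. The paper's own argument is considerably more terse (three sentences), so your version supplies the bookkeeping that the paper leaves implicit, but the underlying idea is the same.
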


\begin{proof}
By definition 
$$\piA_*(X)(K\supseteq L) =\pi^{\mT /L}_*(\siftyV{K/L} \sm DE\cF /L_+ \sm
\Phi^LX). $$
The action of $\mW G$ is through conjugation by group elements. This
gives group homomorphisms $L\lra L^w$, and homeomorphisms between the
spaces corresponding to the groups. The homeomorphisms are equivariant for the
group homomorphism. 

The identification $\cA (\mN, toral)=\cA (\mT )[\mW G]$ is given in
Lemma \ref{lem:ANtoral}.
\end{proof}

\subsection{Restriction for free spectra}
In preparation for  explaining how restriction from $G$-spectra to
$\mN$-spectra is modelled, we  consider the inclusion 
$i: H \lra G$ of a subgroup. We have left  and right adjoints to restriction:
$$\adjointtriple{\mbox{$G$-spectra}}{i_*}{i^*}{i_!}{\mbox{$H$-spectra}}. $$

It is helpful to think first about what happens for free
spectra. We summarise the discussion from \cite{hg}.  Starting in the case when $G$
and $H$ are connected, we have a map 
$$\theta =i^*: H^*(BG)\lra H^*(BH). $$
This induces restriction of scalars $\theta^*$ which itself has left
and right adjoints
$$\adjointtriple{\mbox{$H^*(BH)$-mod}}{\theta_*}{\theta^*}{\theta_!}{\mbox{$H^*(BG)$-mod}}. $$
It is apparent that the two triples of adjoint functors cannot match up. It turns out that (when we
use the  the Eilenberg-Moore equivalence) it is $i_!$ that is modelled by
$\theta^*$, so that $i^*$ is modelled by $\theta_*$. 

The relevant analogy for us does not involve connected groups, so we
recall the general case from \cite{hg}.  We write $i_e: H_e \lra
G_e$ for the inclusion of the identity component, and $i_d: H_d\lra
G_d$ for the induced map on discrete quotients (not usually
injective).  In algebra, we again let
$\theta_e=i_e^*: H^*(BG_e)\lra H^*(BH_e)$ for the induced map in
cohomology. The main piece of data is $\theta =(\theta_e, i_d)$.
 It turns out that $i_!$ is modelled by a functor we call $\theta^*$, 
which is defined on
$H^*(BH_e)[H_d]$-modules $N$ by 
$$\theta^*(N)=\Hom_{\Q [H_d]}(\Q [G_d ], N)$$
(we note this is consistent with the previous notation when $H_d=G_d=1$).
Restriction of groups $i^*$ is then modelled by the functor $\theta_*$
left adjoint to $\theta^*$, which is 
defined on $H^*(BG_e)[G_d]$-modules $M$  by 
$$\theta_*(M) =H^*(BH_e)\tensor_{H^*(BG_e)}M. $$
Induction of spectra $i_*$ is then modelled by the functor
$\theta^{\dagger}$ left adjoint to $\theta_*$
defined on $H^*(BH_e)[H_d]$-modules $N$  by 
$$\theta^{\dagger}(N) =\Q [G_d] \tensor_{\Q [H_d]} \mathbb{D}
(G_e|H_e)\tensor_{H^*(BH_e)}N,   $$
where the relative dualizing module is defined by
$$\mathbb{D} (G_e|H_e)= \Hom_{H^*(BG_e)}(H^*(BH_e), H^*(BG_e)) .$$
In our case the relative dualizing module satisfies
$$\mathbb{D} (G_e|H_e)= \Hom_{H^*(BG_e)}(H^*(BH_e),
H^*(BG_e))\simeq \Sigma^{LG/H}H^*(BH_e) $$
and we may therefore  simplify the expression for $\theta^{\dagger}$  to find
$$\theta^{\dagger}(N) =\Q [G_d] \tensor_{\Q [H_d]} \Sigma^{LG/H}N.  $$

In the special case $H=\mN$ we note that $H_e=\mT$ and $H_d=\mW G$. In
view of the fact that $\mW G/\mW (G_e)\cong G_d$ we see that
$$ i_! \mbox{  is modelled by } 
\theta^*N=N^{\mW G_e} $$
$$ i^* \mbox{  is modelled by } 
\theta_*M=H^*(B\mT )\tensor_{H^*(BG_e)}M $$
and 
$$ i_* \mbox{  is modelled by } 
\theta^{\dagger}N= (\Sigma^{L G/\mT}N)_{\mW (G_e)} $$

\subsection{The image of a spectrum in the model}

We now make explicit the functor we use to relate
$G$-spectra to $\cA (G,toral)$. The motivation is that restriction to
the maximal torus is homotopically faithful, but the special form of
the objects in  the image mean that we can pass to invariants without
losing information. 

We will need to consider the functor 
$$\cA (\mN , toral)=\cA (\mT )[\mW G]\stackrel{\Psi}\lra \cA
(G,toral)$$
from Proposition \ref{prop:Psitheta}, where we use the Lie group
component structure of Subsection \ref{subsec:Liecomponent}. We recall that it was shown
to have  left adjoint $\theta_*$ defined by 
$$\theta_*(Y)= \RRa\tensor_{\RRinv}Y, $$
which on subgroups $K\subseteq \mT$ is 
$$(\theta_*Y)(K)= H^*(B\mT/K)\tensor_{H^*(BW_G^eK)}Y (K), $$

\begin{remark}
In view of the isomorphism $\Q [\mW G]^{\mW W_G^eK}=\Q [W_G^dK]$, the relationship between the two notations is 
$$(\Psi X)(K)=X(K)^{\mW W_G^eK}=\Hom_{(\mW G)_K}(\Q W_G^dK, X(K))=(\theta^*X)(K) .$$
\end{remark}

\begin{defn}
\label{defn:piAG}
The functor $\piAG_*: \mbox{$G$-spectra}\lra \cA (G,toral)$ is defined
as the illustrated composite of three functors:
$$\diagram 
\mbox{$G$-spectra}\rto^{\res^G_{\mN}} \dto_{\piAG_*}&\mbox{$\mN$-spectra}\dto^{\piA_*} \\
 \cA (G,toral) & \cA (\mT )[\mW G]\lto^{\Psi}
\enddiagram$$
\end{defn}

\begin{remark}
We note that specializing the definition to the case $G=\mN$ gives $\piAN_*=\piAT_*=\piA_*$, which is
consistent according to Lemma \ref{lem:piAisequiv}. 
\end{remark}

We immediately express $\piAG_*$ more directly in terms of
$G$-equivariant data. 

\begin{prop}
\label{prop:piAG}
For any $G$-spectrum $X$, and any subgroup $K\subseteq \mT$, we have
$$\piAG_*(X)(K)=b^{W_G^eK}_*(\Phi^KX). $$
If $X$ is a finite $G$-spectrum,  we can express this directly in terms
of Borel cohomology of fixed points of the dual
$$\piAG_*(X)(K)=H^*_{W_G^eK}(\Phi^K(DX)). $$
\end{prop}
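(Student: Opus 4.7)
The plan is to unwind the composite definition $\piAG_*=\Psi\circ\piA_*\circ\res^G_\mN$ on the subgroup $K\subseteq\mT$, and then apply the Borel-cohomology identity of Lemma~\ref{lem:bGWG} to pass from the $\mT/K$-fixed-point description to a $W_G^eK$-equivariant description.

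First I would write out $(\piA_*\res^G_\mN X)(K)$ using the formula recalled at the start of Section~\ref{sec:GspectratoAG}. Since geometric $K$-fixed points is insensitive to whether we first restrict from $G$ to $\mN$ (the functor $\Phi^K$ is compatible with restriction), this equals
$$\pi^{\mT/K}_*\bigl(DE(\mT/K)_+\wedge \Phi^KX\bigr).$$
Rationally, the right hand side is precisely $b^{\mT/K}_*(\Phi^KX)$: by definition of $b=F(E(\mT/K)_+,H)$, one has $b^{\mT/K}_*(Y)=\pi^{\mT/K}_*(Y\wedge DE(\mT/K)_+\wedge H)$, and rationally $H$ is the sphere. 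Applying $\Psi$ then gives
$$\piAG_*(X)(K)=(\piA_*\res^G_\mN X)(K)^{\mW W_G^eK} =b^{\mT/K}_*(\Phi^KX)^{\mW W_G^eK}.$$

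Next I would invoke Lemma~\ref{lem:bGWG} with $\Gamma=W_G^eK$. Its maximal torus is $\mT/K$ and its toral Weyl group is $\mW W_G^eK$, whose order is invertible in $\Q$. The spectrum $\Phi^KX$ carries an action of $W_GK$ (hence of $W_G^eK$), so the hypotheses apply and we obtain
$$b^{\mT/K}_*(\Phi^KX)^{\mW W_G^eK}\;\cong\;b^{W_G^eK}_*(\Phi^KX),$$
which is the first assertion.

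For the finite case I would appeal to the standard identification $b^\Gamma_*(Y)=b^*_\Gamma(DY)=H^*_\Gamma(DY)$ for finite $\Gamma$-spectra $Y$ (as noted in Subsection~\ref{subsec:Borel}), applied with $\Gamma=W_G^eK$ and $Y=\Phi^KX$. Since $\Phi^K$ is a symmetric monoidal functor and preserves duality on finite spectra, $D(\Phi^KX)\simeq\Phi^K(DX)$ as $W_G^eK$-spectra, so
$$\piAG_*(X)(K)=H^*_{W_G^eK}\bigl(D\Phi^KX\bigr)=H^*_{W_G^eK}\bigl(\Phi^K(DX)\bigr).$$
The only delicate point is the bookkeeping of group actions — making sure that the $\mW G$-equivariant structure on $\piA_*\res^G_\mN X$ at $K$ restricts to the $\mW W_G^eK$-action that Lemma~\ref{lem:bGWG} demands, and that this agrees with the $W_G^eK$-action under which one forms the Borel construction. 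This reduces to the identification $(\mW G)_K^e=\mW W_G^eK$ recorded in Lemma~\ref{lem:WGK}, together with the compatibility of geometric fixed points with conjugation, which is standard.
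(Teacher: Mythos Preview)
Your argument is correct and follows the same route as the paper: unwind the definition $\piAG_*=\Psi\circ\piAN_*\circ\res^G_\mN$, identify $\piAN_*(X)(K)$ with $b^{\mT/K}_*(\Phi^KX)$, and then apply Lemma~\ref{lem:bGWG} with $\Gamma=W_G^eK$. If anything, you are more careful than the paper, which elides the passage through $\Psi$ (i.e., the $\mW W_G^eK$-invariants) in its displayed equation; you also spell out the finite case via $b^\Gamma_*(Y)=H^*_\Gamma(DY)$ and $D\Phi^KX\simeq\Phi^K(DX)$, which the paper leaves implicit.
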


\begin{remark}
It would be possible to give the statement of the proposition as the
{\em definition} of $\piAG_*(X)$. We used Definition \ref{defn:piAG}
instead because the deduction of the proposition from the definition
is a little more elementary than the reverse deduction. Indeed, if $\Gamma$ is a connected
group (such as $W_G^eK$) with maximal torus $T$ and $A$ is a $\Gamma$-space (which might have
arisen as $\Phi^K DX$ in some cases)  Lemma \ref{lem:class} gives the two formulae
$$H^*_{\Gamma}(A) =H^*_{\mT}(A)^{\mW \Gamma} $$
and
$$H^*_{\mT}(A)=H^*(BT)\tensor_{H*(B\Gamma)}
H^*_{\Gamma}(A). $$
We view the first as more elementary than the second.
\end{remark}

\begin{proof}
By definition 
$$\piAG_*(X)(K)=\pi_*^{\mT /K}(DE\mT/K_+\sm
  \Phi^KX)=b^{\mT/K}_*(\Phi^KX). $$
The result now follows by applying  Lemma \ref{lem:bGWG} with $\Gamma
=W_G^eG$ and  $A=\Phi^KX$. 
\end{proof}

\subsection{Restriction}
As in the case of free spectra, it will emerge that $\theta^*=\Psi$
corresponds to coinduction, and its left adjoint $\theta_*$
corresponds to restriction.

\begin{prop}
\label{prop:modelofresGN}
The following diagram commutes
$$\diagram
\mbox{toral-$G$-spectra}\rto^{\res^G_{\mN}} \dto^{\piAG_*} &\mbox{toral-$\mN$-spectra}\rto^{\res^{\mN}_{\mT}} \dto^{\piAN_*}
&\mbox{$\mT$-spectra} \dto^{\piAT_*} \\
\cA (G,toral)\rto^{\theta_*} &\cA (\mN,toral)\rto \dto^= &\cA (\mT)\dto^=\\
&\cA (\mT)[\mW G]\rto^{\res^{\mW G}_1}  &\cA (\mT)
\enddiagram$$
\end{prop}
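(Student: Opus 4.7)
The plan is to verify the two squares separately at the level of values on each flag. By Proposition \ref{prop:piAG} the stalk of $\piAG_*(X)$ at a subgroup $K\subseteq \mT$ is $b^{W_G^eK}_*(\Phi^KX)$, while the stalk of $\piAN_*(Y)$ (resp.\ $\piAT_*(Z)$) at $K$ is $b^{\mT/K}_*(\Phi^KY)$ (resp.\ $b^{\mT/K}_*(\Phi^KZ)$), since for $\mN$ the component structure is trivial by Lemma \ref{lem:ANtoral}. The right square is then essentially tautological: on both sides of the lower arrow the underlying $\cA(\mT)$-module is $K\mapsto b^{\mT/K}_*(\Phi^KX)$, and the top arrow is restriction of group action, which at the level of models is precisely the forgetful functor $\cA(\mT)[\mW G]\to\cA(\mT)$. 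The only verification needed is that the $\mW G$-action on $\piAN_*(X)$ induced by conjugation in $\mN$ agrees under this identification with the one on $\piA_*$ coming from Lemma \ref{lem:piAisequiv}, which is immediate from the definitions.

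For the left square, fix $K\subseteq \mT$. The left-hand value is
$$\theta_*\piAG_*(X)(K)=H^*(B\mT/K)\tensor_{H^*(BW_G^eK)}b^{W_G^eK}_*(\Phi^KX),$$
and the right-hand value is $b^{\mT/K}_*(\Phi^K X)$. Since $W_G^eK$ is connected with maximal torus $\mT/K$, the needed isomorphism is precisely the Borel-homology counterpart of Lemma \ref{lem:class}(iv). I would first verify it for finite $G$-spectra $X$: Spanier-Whitehead duality gives $b^{\Gamma}_*(Y)=H^*_{\Gamma}(DY)$, so the statement becomes
$$H^*(B\mT/K)\tensor_{H^*(BW_G^eK)}H^*_{W_G^eK}(\Phi^K DX)\cong H^*_{\mT/K}(\Phi^K DX),$$
which is exactly Lemma \ref{lem:class}(iv) applied to the free $W_G^eK$-space $EW_G^eK\times \Phi^K DX$. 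One then passes to general $X$ by writing $X=\hocolim_\alpha X_\alpha$ with $X_\alpha$ finite and observing that both sides commute with filtered colimits (the right-hand side by definition of $b^{\mT/K}_*$; the left-hand side because tensor product and Borel homology commute with filtered colimits, and $H^*(B\mT/K)$ is free of finite rank over $H^*(BW_G^eK)$).

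To complete the verification I would then check that these isomorphisms are natural in the poset variable, i.e.\ compatible with the structure maps for cotoral inclusions $K\supseteq L$ and with the $(\mW G)_K$-actions. Naturality in the poset is clear because both sides follow the coefficients by Proposition \ref{prop:theta} and the qce property of $\piAN_*$. Equivariance reduces to naturality of Lemma \ref{lem:class}(iv) with respect to the conjugation action of $\mW G$ on the diagram of tori, which is built into the construction of $\piA_*$.

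The main obstacle is the passage from finite spectra to arbitrary ones: $b^{W_G^eK}_*$ is defined as a colimit of Borel cohomology of duals of finite subspectra, so one must check carefully that the comparison map is compatible with this colimit structure and that the flatness of $H^*(B\mT/K)$ over $H^*(BW_G^eK)$ (which is why Lemma \ref{lem:class}(iv) yields an isomorphism rather than merely an Eilenberg-Moore spectral sequence) makes the tensor product exact in the relevant variable. Once this is in hand, the squarewise commutation assembles into the stated diagram.
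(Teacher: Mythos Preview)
Your proposal is correct and follows essentially the same route as the paper. The paper's proof is shorter only because it defers the substantive content of the left square to the immediately following Proposition \ref{prop:thetaPsionG}, whose proof does exactly what you do: reduce to finite spectra, dualize to Borel cohomology, invoke Lemma \ref{lem:class}(iv), and extend to general $X$ using that $H^*(B\mT/K)$ is free over $H^*(BW_G^eK)$ so that both sides commute with filtered colimits.
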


\begin{proof}
The right hand two squares commute by the definition of $\piA_*$
together with 
Lemmas \ref{lem:piAisequiv} and \ref{lem:ANtoral}. 

By definition $\piAG_* X=\Psi \piAN_* (\res^G_{\mN}X)$, so the commutation of
the left hand square is given by the Proposition
\ref{prop:thetaPsionG} below. 
\end{proof}

\begin{prop}
\label{prop:thetaPsionG}
If $X$ is a $G$-spectrum then the counit 
$$\theta_* \piAG_*(X)=\theta_* \Psi \piAN_* (\res^G_{\mN} X)\stackrel{\cong}\lra
\piAN_*(\res^G_{\mN} X)$$
is an isomorphism. 
\end{prop}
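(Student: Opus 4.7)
The plan is to evaluate the counit subgroup-by-subgroup, identify both sides in terms of (co)homology with coefficients in the Borel spectrum $b$, and then deduce the result from the classical Eilenberg--Moore-type isomorphism of Lemma \ref{lem:class}(iv) applied to the connected group $W_G^e K$ with maximal torus $\mT/K$.

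First I would unpack the counit at a subgroup $K\subseteq \mT$. By Proposition \ref{prop:piAG} we have $\piAG_*(X)(K) = b^{W_G^eK}_*(\Phi^KX)$, and since the identity component of $\mN$ is $\mT$, the same formula specializes for $\mN$ to give $\piAN_*(\res^G_\mN X)(K) = b^{\mT/K}_*(\Phi^KX)$. Using Lemma \ref{lem:bGWG} this matches the description $\Psi\piAN_*(\res^G_\mN X)(K) = [b^{\mT/K}_*(\Phi^KX)]^{\mW W_G^eK} = b^{W_G^eK}_*(\Phi^KX)$, so the counit becomes the natural map
$$H^*(B\mT/K)\otimes_{H^*(BW_G^eK)} b^{W_G^eK}_*(\Phi^KX) \lra b^{\mT/K}_*(\Phi^KX),$$
adjoint to the evident forgetful map on $b$-homology.

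Second I would reduce to finite spectra. Because we work rationally, $H^*(B\mT/K)$ is finitely generated and free over $H^*(BW_G^eK)$ by Borel's theorem, so tensoring is exact and commutes with filtered colimits; moreover $b^\Gamma_*$ preserves filtered colimits by construction. Both sides are therefore covariant homology theories in $X$ that preserve filtered colimits, and it suffices to check the isomorphism when $X$ is a finite $G$-spectrum. For such $X$, $\Phi^KX$ is a finite $W_GK$-spectrum, so by the identification $b^\Gamma_*(Y)=H^*_\Gamma(DY)$ for finite $Y$ recalled in Subsection \ref{subsec:Borel}, the claim reduces to
$$H^*(B\mT/K)\otimes_{H^*(BW_G^eK)} H^*_{W_G^eK}(D\Phi^K X) \;\stackrel{\cong}{\lra}\; H^*_{\mT/K}(D\Phi^KX).$$
This is exactly Lemma \ref{lem:class}(iv) applied to the connected compact Lie group $\Gamma = W_G^eK$ with maximal torus $\mT/K$, evaluated on a free $\Gamma$-resolution of $D\Phi^KX$ (equivalently, on the Borel construction $E\Gamma_+\sm_\Gamma D\Phi^KX$); the hypotheses that $\Gamma$ is connected and that we are working over $\Q$ are precisely what makes the Eilenberg--Moore spectral sequence of the pullback square collapse.

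The main obstacle is not the spectral-sequence collapse itself but the bookkeeping required to identify the counit of the $\theta_*\dashv\Psi$ adjunction with the Eilenberg--Moore base-change map. Concretely, one must verify that the ring map $\theta\colon H^*(BW_G^eK)\to H^*(B\mT/K)$ used in defining $\theta_*$ agrees with the map of Borel cohomologies induced by the fibration $W_G^eK/(\mT/K)\to B(\mT/K)\to BW_G^eK$, and that under the identification $\Psi\piAN_*(\res^G_\mN X)(K) = b^{W_G^eK}_*(\Phi^KX)$ of Lemma \ref{lem:bGWG}, the unit of the adjunction corresponds to the forgetful map in $b$-homology. Once this naturality check is in place, the reduction to finite spectra and Lemma \ref{lem:class}(iv) deliver the required isomorphism at each $K$.
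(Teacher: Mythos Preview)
Your proposal is correct and follows essentially the same approach as the paper: evaluate at each $K\subseteq\mT$, use freeness of $H^*(B\mT/K)$ over $H^*(BW_G^eK)$ to reduce to finite spectra, identify the map with the Borel-cohomology base-change map, and invoke the Eilenberg--Moore isomorphism of Lemma~\ref{lem:class}(iv) for $\Gamma=W_G^eK$. The paper's proof is slightly terser about the bookkeeping you flag (identifying the abstract counit with the geometric base-change map), but the substance is identical.
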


\begin{remark}
In essence this amounts to two classical statements about Borel
cohomology (Lemma \ref{lem:class} (iii) and (iv)). 
\end{remark}

\begin{proof}
We consider the situation at $K\subseteq \mT$, for a $G$-spectrum $X$,  where we
have the map
$$H^*(B\mT /K)\tensor_{H^*(BW_G^eK)}\pi^{\mT/K}_*(DE\mT /K_+\sm
\Phi^KX)^{\mW W_G^eK}\lra  \pi^{\mT/K}_*(DE\mT /K_+\sm \Phi^KX). $$
Since $H^*(B\mT /K)$ is free over  $H^*(BW_G^eK)$, both sides commute
with direct limits in $X$, so  it suffices to prove this is an
equivalence for finite $X$, and these may be taken to be of the form
$DY$ for a finite spectrum $Y$. Since $\Phi^KDY\simeq D\Phi^KY$ for finite $Y$, and since 
$$DE\mT/K_+\sm D\Phi^KY\simeq D(E\mT/K_+ \sm \Phi^KY)$$ 
we may translate this into a statement about Borel cohomology of
the $WK$-spectrum $\Phi^KY$: 
$$H^*(B\mT /K)\tensor_{H^*(BW_G^eK)} H^*_{\mT/K}(Z)^{\mW W_G^eK}
\stackrel{\cong}\lra H^*_{\mT /K}(Z). $$
We note further that this only depends on the identity component
$W_G^eK$ of $WK$, and it is sufficient to consider the special case when 
$Z$ is free and the suspension spectrum of a space. 

The required isomorphism is then the special case $\Gamma =W_G^eK$ of
the Eilenberg-Moore theorem as in Lemma \ref{lem:class} (iv). 
This completes the proof of the proposition.
\end{proof}

We note that Proposition \ref{prop:thetaPsionG} has significant consequences:
only modules of the form $\theta_* N$ can
be $\piAN_*X$ for a $G$-spectrum $X$. 

\begin{example}
If $G=SO(3)$ we have $\mN =O(2)$ and $\mT =SO(2)$. Thus $H^*(B\mT)=\Q
[c]$ for $c$ of degree $-2$ with $W=O(2)/SO(2)$ acting as $-1$ on $c$,
and $H^*(BG) =H^*(B\mT)^W=\Q [d]$ where $d=c^2$ is of degree $-4$. 
 
We thus find that the only $\Q[c][W]$-modules occurring as the $\mT$-equivariant
homotopy of a free $G$-spectrum are those of the form
$M=\Q[c]\tensor_{\Q [d]} N$. In particular the eigenspaces of $+1$ and
$-1$ are related by 
$$M^-= c\cdot N =\Sigma^{-2} N = \Sigma^{-2}M^+.$$
For example $\Q [c]/(c^2)=\Q \oplus \Sigma^{-2}\tilde{\Q}$ occurs, but
the dual module  $\Q \oplus \Sigma^{2}\tilde{\Q}$ does not. 
\end{example}

Proposition \ref{prop:thetaPsionG}  gives the beginning of our main change of groups theorem.
\begin{cor}
If $X$ and $Y$ are $G$-spectra
then 
$$\Hom_{\cA (G,toral)}(\piAG_*X,\piAG_*Y)=\Hom_{\cA (\mT)}(\piAT_*X,
\piAT_*Y)^{\mW G}. \qqed $$
\end{cor}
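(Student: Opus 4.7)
The plan is to chase through the definition $\piAG_*(-) = \Psi \piAN_*(\res^G_{\mN}-)$ together with the adjunction $\theta_* \vdash \Psi$ established in Corollary \ref{cor:Psitheta}, and then apply Proposition \ref{prop:thetaPsionG} which tells us the counit is an isomorphism on objects in the image of $\piAN_* \circ \res^G_{\mN}$.

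More concretely, I would start on the left hand side and write
$$\Hom_{\cA(G,toral)}(\piAG_*X, \piAG_*Y) = \Hom_{\cA(G,toral)}(\piAG_*X, \Psi \piAN_*(\res^G_{\mN}Y))$$
using the definition of $\piAG_*$. By the adjunction $\theta_* \vdash \Psi$ this becomes
$$\Hom_{\cA(\mN,toral)}(\theta_* \piAG_*X, \piAN_*(\res^G_{\mN}Y)).$$
Proposition \ref{prop:thetaPsionG} identifies $\theta_* \piAG_*X = \theta_* \Psi \piAN_*(\res^G_{\mN}X)$ with $\piAN_*(\res^G_{\mN}X)$, so the expression simplifies to
$$\Hom_{\cA(\mN,toral)}(\piAN_*(\res^G_{\mN}X), \piAN_*(\res^G_{\mN}Y)).$$

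Finally, using the identification $\cA(\mN,toral) = \cA(\mT)[\mW G]$ from Lemma \ref{lem:ANtoral}, and the fact that for $\mW G$-equivariant $\RRa$-modules $M,N$ one has $\Hom_{\cA(\mT)[\mW G]}(M,N) = \Hom_{\cA(\mT)}(M,N)^{\mW G}$ (maps of equivariant objects are the fixed points of the maps of underlying objects), together with the commutativity of the right-hand squares of Proposition \ref{prop:modelofresGN} identifying the underlying $\cA(\mT)$-module of $\piAN_*(\res^G_{\mN}X)$ with $\piAT_*X$, we obtain
$$\Hom_{\cA(\mT)}(\piAT_*X, \piAT_*Y)^{\mW G}$$
as required.

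The main conceptual content is already packaged in Proposition \ref{prop:thetaPsionG}; the corollary is essentially a formal consequence of the adjunction plus that one nontrivial input. No further obstacle arises, so the argument reduces to a short diagram chase.
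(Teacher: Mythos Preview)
Your proof is correct and is precisely the argument the paper has in mind. The paper gives no explicit proof (the \qqed\ marks the corollary as immediate from Proposition \ref{prop:thetaPsionG}), and your unpacking via the $\theta_*\dashv\Psi$ adjunction, the counit isomorphism of Proposition \ref{prop:thetaPsionG}, and the identification $\Hom_{\cA(\mT)[\mW G]}(M,N)=\Hom_{\cA(\mT)}(M,N)^{\mW G}$ is exactly the intended route.
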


\subsection{Coinduction}
We have just shown that $\theta_*$ models restriction. If 
the algebraic and topological categories were equivalent, it would
follow that the right adjoint of $\theta_*$ (viz $\Psi$) modelled the
right adjoint of restriction (viz coinduction). We show that this
expected relationship does indeed hold.

\begin{prop}
\label{prop:modelofcoindNG}
For any $\mN$-spectrum $Y$, 
$$\piAG_*(F_{\mN}(G_+, Y))=\Psi \piAN_*(Y), $$
so that the following diagram commutes
$$\diagram
\mbox{toral-$G$-spectra} \dto_{\piAG_*}
&\mbox{toral-$\mN$-spectra}\lto_{F_{\mN}(G_+, \cdot)} \dto^{\piAN_*}\\
\cA (G,toral) &\cA (\mN,toral)\lto^{\Psi} \ar@{=}[d] \\
&\cA (\mT)[\mW G]
\enddiagram$$
\end{prop}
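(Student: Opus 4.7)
\textbf{Proposal for the proof of Proposition \ref{prop:modelofcoindNG}.}

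This assertion is dual to Proposition \ref{prop:thetaPsionG}: the latter showed that the left adjoint $\theta_*$ models restriction, and the present statement should read as ``the right adjoint $\Psi$ models coinduction''. My plan is to construct a natural comparison map and verify it is an isomorphism stalkwise at each $K\subseteq\mT$. The comparison map $\phi_Y\colon \piAG_*(F_\mN(G_+,Y))\to \Psi\piAN_*(Y)$ is built from the topological counit as follows. By the $\theta_*\dashv\Psi$ adjunction (Corollary \ref{cor:Psitheta}), a map $\phi_Y$ is the same as a map $\theta_*\piAG_*(F_\mN(G_+,Y))\to\piAN_*(Y)$; using Proposition \ref{prop:thetaPsionG} the domain rewrites as $\piAN_*(\res^G_\mN F_\mN(G_+,Y))$, and the counit of $\res^G_\mN \dashv F_\mN(G_+,-)$ supplies the required map after applying $\piAN_*$.

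To show $\phi_Y$ is an isomorphism, I would evaluate at a subgroup $K\subseteq\mT$. By Proposition \ref{prop:piAG},
$$\piAG_*(F_\mN(G_+,Y))(K)=b^{W_G^eK}_*(\Phi^K F_\mN(G_+,Y)),$$
while unravelling the definitions and invoking Lemma \ref{lem:bGWG} gives
$$(\Psi\piAN_*(Y))(K)=b^{\mT/K}_*(\Phi^K Y)^{(\mW G)^e_K}=b^{W_G^eK}_*(\Phi^K Y),$$
the last equality being the formal consequence of Lemma \ref{lem:bGWG} once the appropriate $W_G^eK$-structure is made explicit. So the stalkwise statement is an equality of two Borel cohomologies.

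The core computation uses the Wirthm\"uller isomorphism to rewrite $F_\mN(G_+,Y)\simeq G_+\wedge_\mN S^L\wedge Y$ with $L=L(G/\mN)$, together with the identification $G\wedge_\mN(-)\simeq (G\wedge_\mT(-))/\mW G$ coming from the normality of $\mT$ in $\mN$. Then Lemma \ref{lem:fpind} decomposes $\Phi^K(G_+\wedge_\mT S^L Y)$ as a $\mW G$-equivariant wedge of $W_G^eK$-induced summands indexed by $\mN$-cosets $g\mN$ with $g^{-1}Kg\subseteq\mT$, each contributing a copy of $\Phi^{g^{-1}Kg}(S^LY)$. Passing to $b^{W_G^eK}_*$, Corollary \ref{cor:fpNGamma} collapses the induction pieces, and the only surviving contribution comes from $g\in N_G(K)$, producing $\mT/K$-Borel cohomology of $\Phi^K Y$ twisted by $S^L$. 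Lemma \ref{lem:suspLie} then absorbs the $S^L$ factor -- trading $LG$ and $L\mT$ in the Borel theory -- leaving precisely $b^{\mT/K}_*(\Phi^K Y)^{(\mW G)^e_K}$, as required.

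The hard part will be this last step: correctly tracking the coset decomposition of $\Phi^K$ of the induced spectrum, keeping the Weyl actions on the index set aligned, and checking that $W_G^eK$-Borel cohomology picks out exactly the $N_G(K)$-cosets while Corollary \ref{cor:fpNGamma} kills the rest. Once the book-keeping is straight, the adjoint-representation twist produced by Wirthm\"uller cancels precisely against the discrepancy between $G$- and $\mN$-equivariant Borel theories through Lemma \ref{lem:suspLie}, yielding the desired isomorphism and hence the commutativity of the advertised diagram.
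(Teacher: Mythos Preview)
Your construction of the comparison map via the counit is correct and coincides with the paper's. However, your verification strategy contains a genuine error and is far more elaborate than needed.

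The error is the displayed equation $(\Psi\piAN_*(Y))(K)=b^{\mT/K}_*(\Phi^K Y)^{(\mW G)^e_K}=b^{W_G^eK}_*(\Phi^K Y)$. The first equality is correct, but the second is meaningless: for an $\mN$-spectrum $Y$, the geometric fixed points $\Phi^K Y$ carry only an action of $W_{\mN}K=(N_GK\cap\mN)/K$, whose identity component is $\mT/K$. There is no $W_G^eK$-action ``to be made explicit'', so Lemma~\ref{lem:bGWG} does not apply with $\Gamma=W_G^eK$. The correct target is simply $b^{\mT/K}_*(\Phi^K Y)^{(\mW G)^e_K}$, and that is what you must match.

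The paper's argument bypasses the entire Wirthm\"uller computation. Both $Y\mapsto\piAG_*(F_{\mN}(G_+,Y))$ and $Y\mapsto\Psi\piAN_*(Y)$ are homology theories in the toral $\mN$-spectrum $Y$ (the second because $\Psi$ is exact rationally), so it suffices to check the comparison on generators $Y=D\mN/K_+$ with $K\subseteq\mT$. For these, the counit $\res^G_{\mN}F_{\mN}(G_+,Y)\to Y$ is dual to the inclusion $\mN/K_+\to G/K_+$, and one only needs $\Psi\piAN_*$ to vanish on its cofibre --- which is exactly Corollary~\ref{cor:fpNGamma}. No Wirthm\"uller isomorphism, no $S^{L(G/\mN)}$ twist, no Lemma~\ref{lem:suspLie}, and no coset decomposition of $\Phi^K$ of an induced spectrum (which in any case is not covered by Lemma~\ref{lem:fpind}, since that lemma treats induction from $\mT$, not from $\mN$). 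Your route might be salvageable, but the bookkeeping you flag as ``the hard part'' is genuinely hard and entirely avoidable.
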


\begin{remark}
In essence this amounts to a classical statement about Borel
cohomology (Corollary \ref{cor:NGamma}).
\end{remark}

\begin{proof}
First we note there is a natural transformation. Indeed, we may apply
$\Psi \piAN_*$ to the counit
$$\res^G_{\mN} F_{\mN}(G_+, Y)) \lra Y$$ 
to obtain a natural map
$$\piAG_*(F_{\mN}(G_+, Y))=\Psi (\piAN_* (\res^G_{\mN} F_{\mN}(G_+, Y)))
\lra  \Psi (\piAN_*(Y)). $$
Both of these are cohomology theories in the toral $\mN$-spectrum $Y$, so it
suffices to show that is is an equivalence when $Y=D\mN /K_+$
for $K\subseteq \mT$.  Thus we need only check that $\Psi \piAN_*$
vanishes on the cofibre of $\mN/K_+\lra G/K_+$, which was
Corollary \ref{cor:fpNGamma}. 
\end{proof}

\section{An Adams spectral sequence}
\label{sec:ASS}

We need to set up a means of calculation, so we will construct an
Adams spectral sequence based on $\cA (G,toral)$. We summarise the
method here, referring to the appropriate sections for proofs.

\subsection{Overview}
The main theorem of the paper is as follows. 

\begin{thm}
\label{thm:ASS}
There is an Adams spectral sequence for calculating maps between toral
$G$-spectra. For arbitrary rational toral $G$-spectra $X$ and $Y$ there is a
strongly convergent spectral sequence
$$E_2^{s,t}=\Ext_{\cA (G,toral)}^{s,t}(\piAG_*(X),
\piAG_*(Y))\Rightarrow [X,Y]^G_{t-s}.$$
The $E_2$ page lies between the $s=0$ line and the $s=r$ line, where
$r$ is the rank of $G$, so the spectral sequence collapses at the 
$E_{r+1}$-page. 
\end{thm}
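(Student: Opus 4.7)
The plan is to construct the Adams spectral sequence by the standard injective resolution method, with the input being the facts about $\cA(G,toral)$ established in Section \ref{sec:AGtoralinj} (enough injectives, injective dimension equal to the rank $r$ of $G$) together with the topological realization and detection results indicated for Sections \ref{sec:objects} and \ref{sec:mapstoinj}.

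First, given a rational toral $G$-spectrum $Y$, I would take an injective resolution in $\cA(G,toral)$
$$0 \lra \piAG_*(Y) \lra I_0 \lra I_1 \lra \cdots \lra I_r \lra 0,$$
which by Proposition \ref{prop:AGtoralinj} exists and terminates at degree $r$. The main technical input, deferred to Section \ref{sec:objects}, is the realization statement: for each injective $I_s$ of $\cA(G,toral)$ there is a $G$-spectrum $J_s$ with $\piAG_*(J_s) \cong I_s$, and this realization is sufficiently functorial that the differentials $I_s \to I_{s+1}$ are realized by $G$-maps $J_s \to J_{s+1}$. Using these, I would build a tower under $Y$
$$Y = Y_{-1} \lra Y_0 \lra Y_1 \lra \cdots \lra Y_r$$
whose cofibres are (up to suspension) the spectra $J_s$, by inductively choosing $Y_s \to J_s$ realizing the map $I_{s-1} \to I_s$ of injectives and taking fibres. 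Finiteness of the tower is automatic from the injective dimension bound.

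Second, to identify the $E_2$-page I would invoke the detection result of Section \ref{sec:mapstoinj}, namely that for any toral $G$-spectrum $X$ the natural map
$$[X, J_s]^G_* \lra \Hom_{\cA(G,toral)}(\piAG_*(X), I_s)_*$$
is an isomorphism (with higher $\Ext$ groups vanishing because $I_s$ is injective). Applying $[X, -]^G_*$ to the tower then gives an exact couple, and the $E_1$-page is
$$E_1^{s,t} = [X, J_s]^G_{t-s} = \Hom_{\cA(G,toral)}(\piAG_*(X), I_s)_{t-s}.$$
Taking cohomology of the complex $(I_\bullet, d)$ computes $\Ext_{\cA(G,toral)}^{s,t}(\piAG_*(X), \piAG_*(Y))$, which is the asserted $E_2$-page.

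Third, convergence and collapse: because the tower has only $r{+}1$ stages ($Y_r \simeq *$ since the resolution terminates), the associated spectral sequence is a bounded, strongly convergent spectral sequence with abutment $[X, Y]^G_{t-s}$. The $E_2$-page is concentrated in the horizontal strip $0 \leq s \leq r$, so the only possible nonzero differentials have target in that strip, and $d_{r+1}$ and later differentials must vanish on degree grounds; hence collapse at $E_{r+1}$.

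The main obstacles are the two black boxes I have taken as input: (a) realizing each injective of $\cA(G,toral)$ by a $G$-spectrum with the correct $\piAG_*$ and with enough functoriality to produce the tower (Section \ref{sec:objects}), and (b) the detection statement that $[X, J_s]^G_*$ is computed by $\Hom_{\cA(G,toral)}$ into the injective $I_s$ (Section \ref{sec:mapstoinj}). The realization step is handled by building $J_s$ from coinduced Borel-theoretic models (using the $f^G_{(K)}$ functors of Subsection \ref{subsec:fK} and the descent adjunction $\theta_* \dashv \Psi$ of Corollary \ref{cor:Psitheta}), and the detection step reduces, via Proposition \ref{prop:toraldetection} and Proposition \ref{prop:thetaPsionG}, to the analogous detection statement for $\mT$-spectra and $\cA(\mT)$, where it is classical. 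Once these are in hand, the spectral sequence, its convergence, and its collapse at $E_{r+1}$ follow formally.
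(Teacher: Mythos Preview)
Your approach is essentially the paper's, and you have correctly identified the three key ingredients: realizable injectives, detection of maps into them, and finite injective dimension. Two points need tightening.

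First, your description of the tower is tangled. You write a tower \emph{under} $Y$ with maps $Y_{-1}\to Y_0\to\cdots$, but then speak of ``taking fibres'' and of choosing $Y_s\to J_s$; these do not fit together. More importantly, you do not need the realization of injectives to be ``sufficiently functorial'' so as to realize the differentials $I_s\to I_{s+1}$. The paper builds the standard Adams tower $Y=Y_0\leftarrow Y_1\leftarrow\cdots$ by using the detection statement (your black box (b), the paper's Proposition~\ref{prop:mapstoinj}) at each stage to realize the algebraic map $\piAG_*(\Sigma^s Y_s)\hookrightarrow I_s$ by a $G$-map $Y_s\to\Sigma^{-s}\bbI_s$, and then takes $Y_{s+1}$ to be the fibre. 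No direct comparison between the spectra $\bbI_{s-1}$ and $\bbI_s$ is needed.

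Second, and more substantively, your claim that ``$Y_r\simeq *$ since the resolution terminates'' is a genuine gap. Termination of the injective resolution only yields $\piAG_*(Y_{r+1})=0$; concluding that $Y_{r+1}$ is contractible (hence that the filtration is finite and the spectral sequence strongly convergent) requires the separate fact that $\piAG_*$ is conservative on toral $G$-spectra. The paper isolates this as Lemma~\ref{lem:convergence}, proved via $\piAN_*(X)=\theta_*\piAG_*(X)$ (Proposition~\ref{prop:thetaPsionG}) together with toral detection (Proposition~\ref{prop:toraldetection}). You cite both propositions, but only in connection with detecting maps into injectives; conservativity of $\piAG_*$ is a distinct use of them that you should make explicit.
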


\begin{proof}
We outline the standard strategy and deal with the main points in
succession. 

First, Proposition \ref{prop:AGtoralinj} shows that 
 the abelian category $\cA (G,toral)$ has enough injectives.

Accordingly, we may form an injective resolution 
$$0\lra \piAG_*(Y)\lra I_0\lra I_{1} \lra \cdots $$
of $\piAG_*(Y)$ in $\cA (G,toral)$. 

We then show that this can be realized by toral spectra. First the objects.

\begin{lemma}
\label{lem:realinj}
Enough injectives are realizable: there are enough injectives $I$ in
$\cA (G,toral)$ for which there exist toral $G$-spectra $\bbI$ with $\piAG_*(\bbI)=I$.
\end{lemma}

This is proved in Section \ref{sec:objects}.  

Next we show that maps between the injectives are realizable. 

\begin{prop}
\label{prop:mapstoinj}
If $\bbI$ is one of the injectives constructed in the proof of Lemma
\ref{lem:realinj}, then we have an isomorphism 
$$\piAG_* :[X, \bbI ]^G\lra \Hom_{\cA (G, toral)} (\piAG_*(X),
\piAG_*(\bbI)) =\Hom_{\cA (G, toral)} (\piAG_*(X), I). $$
\end{prop}

This is proved in Section \ref{sec:mapstoinj}. 

This enables us to construct an Adams tower
$$\diagram 
Y \ar@{=}[r]&Y_0 \dto &Y_1 \dto \lto &Y_2 \dto \lto &Y_3 \dto \lto& .\lto\\
&\bbI_0 &\Sigma^{-1}\bbI_1 &\Sigma^{-2}\bbI_2 &\Sigma^{-3}\bbI_3 &
\enddiagram$$

The construction starts by using Lemma \ref{lem:realinj} to realize
$I_0$ by a $G$-spectrum $\bbI_0$ and then Proposition \ref{prop:mapstoinj} to realize
$\piA_*(Y)\lra I_0$ by a map $Y\lra \bbI_0$. We now take  $Y_1$ to be
its fibre so that  $\piAG_*(\Sigma Y_1)=\cok (\piAG_*(Y)\lra I_0)$. We
may now repeat, using Lemma \ref{lem:realinj} to realize $I_1$ and 
 Proposition \ref{prop:mapstoinj} to give  a map $Y_1 \lra
\Sigma^{-1}\bbI_1$ realizing the map in the algebraic
resolution. Higher Adams covers are constructed by continuing this
process. 

This process terminates by Proposition \ref{prop:AGtoralinj}, which
shows the category $\cA (G, toral)$ has finite injective dimension.

We deduce that the Adams tower stops at $Y_{r+1}$ with
$\piAG_*(Y_{r+1})=0$. Applying $[X,\cdot ]^{G}$ to the tower we obtain
a spectral sequence. By Proposition \ref{prop:mapstoinj} it has the stated
$E_2$ term. 

The convergence statement is as follows. 

\begin{lemma}
\label{lem:convergence}
If $X$ is a toral $G$-spectrum with $\piAG_*(X)=0$ then $X\simeq *$. 
\end{lemma}


\begin{proof}
Suppose then
that $\piAG_*(X)=0$, and we want to prove that $X$ is contractible. By
Proposition \ref{prop:toraldetection} it suffices to show
$\piAT_*(X)=0$. By definition,  $\piAG_*(X)=\Psi \piAT_*(X)$, so the
result follows, since by Proposition \ref{prop:thetaPsionG} we have
$$\piAN_*(X)=\theta_*\Psi \piAN_*X=\theta_* \piAG_*X.$$ 
\end{proof}

Modulo the deferred proofs of the lemmas, this completes the proof of
Theorem \ref{thm:ASS}. 
\end{proof}

\section{Realizing enough injectives}
\label{sec:objects}
In this subsection we prove Lemma \ref{lem:realinj} by realizing
enough of  the injectives described in Section \ref{sec:AGtoralinj}. 

\subsection{Supports}
For a commutative Noetherian ring, the indecomposable injectives
correspond to the prime ideals, and the injective corresponding to a
prime $\wp$ is the injective hull of the residue field of $\wp$. The
support of a sum of these is the collection of primes involved. 
The same  principle applies in our context. We have notions of algebraic
and geometric injectives and in both cases the support is a set of closed subgroups. 

In $\cA (\mT )$ the support is given by the maximal subgroup on which a module
is non-zero. This means that the primes correspond to closed subgroups $K$, the ring
corresponding  to $K$ is $H^*(B\mT /K)$ with residue field $\Q$ and 
injective hull $H_*(B\mT /K)$. To obtain the corresponding object of 
$\cA (\mT)$, we apply the functor $f_K^{\mT}$ right adjoint to
evaluation at $K$.

Moving from $\mT$ to $\mN$, we saw in Section \ref{sec:AGtoralinj}, 
that the same idea works for $\cA (\mN , toral)=\cA (\mT)[\mW G]$
provided we use the complete $\mW G$ orbit $(K)$ rather than 
the singleton $K$. For $G$, the support is detected through
restriction to $\mN$.

\subsection{Some idempotent spaces}
The support in the topological setting corresponds to geometric
isotropy.  Indecomposable injectives are realized by the simplest possible 
space with geometric isotropy equal to the support. We pause to
catalogue some of these spaces. 

The geometric isotropy 
$$\GI (X)=\{ K \st \Phi^KX \not
\simeq_1 *\}$$
consists of subgroups where the geometric fixed points are
non-equivariantly essential. We further restrict to spectra where the
geometric fixed points are nonequivariantly either $S^0$ or contractible, which we might
call `locally idempotent'.

We recall that a collection $\mcH$ of subgroups closed under conjugacy
is called a 
{\em
 family} if it is closed under passage to subgroups, it is called a 
{\em
 cofamily} if it is closed under passage to supergroups, and it is
called
an {\em interval} if it contains any subgroup $K$ which lies between
two elements of $\mcH$. Intervals of subgroups are precisely those
collections which are  the intersection of a family and a cofamily.

\begin{defn}
If $\mcH$ is an interval of subgroups we write $\Lambda (\mcH)$ for
the set of subgroups of elements of $\mcH$ (which is the smallest
family containing $\mcH$) and 
$V (\mcH)$ for the set of supergroups of elements of $\mcH$ (which is the smallest
cofamily containing $\mcH$) and we define
$$E\langle \mcH\rangle := E\Lambda (\mcH)_+\sm \tilde{E}(All \setminus V(\mcH)).$$
\end{defn}

The proof of the following lemma is immediate from the Geometric Fixed
Point Whitehead Theorem. 

\begin{lemma}
If $\mcH$ is an interval, and we choose a family $\cF$ of subgroups and a
cofamily $\cC$ of  subgroups so that $\mcH =\cF \cap \cC$ then 
$$E\langle \mcH\rangle \simeq E\cF_+ \sm \tilde{E}(All \setminus
\cC). $$
The space  $E\langle \mcH\rangle $  is an idempotent spectrum with geometric isotropy $\mcH$, 
and any other locally idempotent spectrum with geometric isotropy
$\mcH$ is equivalent to it. \qqed
\end{lemma}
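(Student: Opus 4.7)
The plan is to derive all three assertions from the Geometric Fixed Point Whitehead Theorem --- equivalences of $G$-spectra are detected by the functors $\Phi^K$ --- combined with the standard calculations $\Phi^K E\cF'_+ \simeq S^0$ if $K \in \cF'$ (otherwise $*$), and $\Phi^K \tilde{E}\cF' \simeq *$ if $K \in \cF'$ (otherwise $S^0$).

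First I would compute $\Phi^K E\langle \mcH \rangle \simeq \Phi^K E\Lambda(\mcH)_+ \wedge \Phi^K \tilde{E}(All \setminus V(\mcH))$, which is $S^0$ precisely when $K \in \Lambda(\mcH) \cap V(\mcH)$. The interval hypothesis forces $\Lambda(\mcH) \cap V(\mcH) = \mcH$, since any $K$ sandwiched between two members of $\mcH$ must itself lie in $\mcH$, and the reverse inclusion is trivial. So $E\langle \mcH \rangle$ is locally idempotent with geometric isotropy exactly $\mcH$.

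For the identification $E\langle \mcH \rangle \simeq E\cF_+ \wedge \tilde{E}(All \setminus \cC)$, the minimality of $\Lambda(\mcH)$ among families containing $\mcH$ (and of $V(\mcH)$ among cofamilies) gives $\Lambda(\mcH) \subseteq \cF$ and $V(\mcH) \subseteq \cC$. These inclusions produce the zig-zag
$$E\langle \mcH \rangle \longrightarrow E\cF_+ \wedge \tilde{E}(All \setminus V(\mcH)) \longleftarrow E\cF_+ \wedge \tilde{E}(All \setminus \cC),$$
whose arrows are $\Phi^K$-equivalences by case analysis: the only subtle case, $K \in \cF \cap V(\mcH)$ with $K \notin \Lambda(\mcH)$, is vacuous since $\cF \cap V(\mcH) \subseteq \cF \cap \cC = \mcH \subseteq \Lambda(\mcH)$.

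For idempotence and uniqueness, the core observation is that for any locally idempotent $Y$ with geometric isotropy $\mcH$, both the projection $E\Lambda(\mcH)_+ \wedge Y \to Y$ (induced by $E\Lambda(\mcH)_+ \to S^0$) and the unit $Y \to Y \wedge \tilde{E}(All \setminus V(\mcH))$ (induced by $S^0 \to \tilde{E}(All \setminus V(\mcH))$) are $\Phi^K$-equivalences; this reduces to $\mcH \subseteq \Lambda(\mcH) \cap V(\mcH)$ together with $\Phi^K Y \simeq *$ for $K \notin \mcH$. Combining yields $Y \simeq Y \wedge E\langle \mcH \rangle$. Specialising to $Y = E\langle \mcH \rangle$ gives idempotence, and to $Y = X$ gives $X \simeq X \wedge E\langle \mcH \rangle$; applying the construction symmetrically in the two smash factors yields $X \wedge E\langle \mcH \rangle \simeq E\langle \mcH \rangle$, whence $X \simeq E\langle \mcH \rangle$. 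The main obstacle is this last symmetric application: absent a canonical unit $S^0 \to E\langle \mcH \rangle$, one must rely on the interval hypothesis to ensure the zig-zag works uniformly on both sides of the smash product.
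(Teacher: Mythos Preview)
Your approach is exactly the paper's: the paper records only that the lemma is ``immediate from the Geometric Fixed Point Whitehead Theorem'' and gives no further argument. Your computations of $\Phi^K$ for the first equivalence and your zig-zag showing $Y\simeq Y\sm E\langle\mcH\rangle$ (hence idempotence) are correct and supply the details the paper omits.

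Your hesitation about the uniqueness step is well placed, and the gap is real rather than cosmetic. The ``symmetric application'' you propose does not produce $X\sm E\langle\mcH\rangle\simeq E\langle\mcH\rangle$: running the same zig-zag on the other smash factor only reproduces idempotence, because the maps $E\Lambda(\mcH)_+\to S^0$ and $S^0\to\tilde E(\mathrm{All}\setminus V(\mcH))$ are specific to $E\langle\mcH\rangle$ and you have no analogous maps for $X$. The Geometric Fixed Point Whitehead Theorem tests \emph{maps}, so without a map between $X$ and $E\langle\mcH\rangle$ it says nothing. In fact, with only the nonequivariant hypothesis $\Phi^K X\simeq_1 S^0$ the uniqueness can fail: for $G=C_2$ and $\mcH=\{e\}$, the free rational $C_2$-spectrum corresponding to the sign representation of $C_2$ on $\Q$ is locally idempotent with geometric isotropy $\{e\}$ but is not equivalent to $EC_{2+}$. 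The paper's one-line proof does not address this either; the uniqueness clause should be read as applying to the spectra $E\cF_+\sm\tilde E(\mathrm{All}\setminus\cC)$ actually in play (where your zig-zag does give the maps needed), or with the stronger hypothesis that $\Phi^K X\simeq S^0$ as a $W_GK$-spectrum.
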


\begin{remark}
It is worth recording the following easy observations.
\begin{enumerate}
\item $\GI (E\langle \mcH \rangle)=\mcH .$
\item If $\cF$ is a family then 
$$E\langle \cF \rangle =E\cF_+, $$
\item If $\cC$ is a cofamily then 
$$E\langle \cC \rangle =\tilde{E}(All \setminus \cC)$$
\item Given two intervals $\mcH_1$ and $\mcH_2$ we have an equivalence
$$E\langle \mcH_1 \rangle \sm  E\langle \mcH_2 \rangle \simeq  E\langle \mcH_1
\cap \mcH_2 \rangle . $$
\item  If $K$ is a subgroup of $G$ and  $\mcH$ is an interval of subgroups of
$G$, we may consider the interval $\mcH|_K$ of subgroups of $K$ from 
 $\mcH$ and  then  
$$\res^G_KE_G\langle \mcH \rangle =E_K\langle \mcH|_K \rangle .  $$ 
\end{enumerate}
\end{remark}

\subsection{Idempotent spaces from conjugacy classes}
\label{subsec:idempconj}
We apply the generalities in our standard context with $G$ a compact
Lie  group with  maximal torus $\mT$ and $\mN =N_G(\mT)$. 

The spectra we are concerned with are idempotent spectra with all 
the geometric isotropy groups coming from a single
conjugacy class in a larger group. The point of the previous
subsection was  to point out that in this case the geometric isotropy determines the
object. This subsection records some immediate consequences for single
conjugacy classes. 

For the interval $(K)_G$ we consider the space
$$\elrG{K}=E\Lambda_G (K)_+\sm \Et (All \setminus V_G (K))$$
where $\Lambda_G (K)$ is the family of subgroups $G$-subconjugate to $K$ and
$V_G (K)$ is the cofamily of subgroups containing a $G$-conjugate  of $K$. In the
following, it is helpful to introduce some temporary notation. We write $P=N_{\mN}K$ for the subgroup of
$\mN$ fixing $K$, and we suppose the $\mN$ conjugacy class  of $K$  is $\{ K_1,
\ldots, K_s\}$, so that $s=|\mN : P|$.

\begin{lemma}
\label{lem:resegk}
There is an equivalence of $\mT$-spectra
$$\res^{P}_{\mT}\elrP{K}\simeq \elrT{K}.$$
There is an equivalence of $\mN$-spectra
$$e_{(\mT)}\res^G_{\mN}\elrG{K}\simeq \elrN{K}\simeq \mN_+
\sm_{P} \elrT{K},  $$
and hence an equivalence of $\mT$-spectra
$$\res^G_{\mT}\elrG{K}\simeq \bigvee_{i=1}^s \elrT{K_i}. $$
\end{lemma}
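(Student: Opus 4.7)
Each equivalence follows from the uniqueness of locally idempotent spectra with a given interval of geometric isotropy, together with item (5) of the preceding remark: $\res^G_K \elrG{\mcH} = E_K\langle \mcH|_K \rangle$. The one nontrivial Lie-theoretic input I need is that a $G$-conjugate $L = K^g$ that lies in $\mT$ is automatically an $\mN$-conjugate of $K$: both $\mT$ and $\mT^{g^{-1}}$ are maximal tori of the identity component of $N_G(L)$, so are conjugate by some $h \in N_G(L)_e$, giving $gh \in \mN$ with $K^{gh} = L$. Consequently, the $G$-conjugates of $K$ that lie in $\mT$ are precisely the $\mN$-orbit $\{K_1, \ldots, K_s\}$.

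For the first equivalence, $P = N_{\mN}K$ stabilises $K$ under conjugation by definition, so $(K)_P|_{\mT} = \{K\} = (K)_{\mT}$, and item (5) gives $\res^{P}_{\mT}\elrP{K} \simeq \elrT{K}$. For the middle equivalence of the second line, item (5) shows $\res^G_{\mN}\elrG{K}$ has geometric isotropy $(K)_G|_{\mN}$; smashing with $e_{(\mT)}S^0 \simeq \etoralp$ (Lemma \ref{lem:etoralp}) intersects this with the subgroups of $\mT$, which by the input above is exactly $\{K_1, \ldots, K_s\}$, matching the geometric isotropy of $\elrN{K}$. Since both are locally idempotent, uniqueness supplies the equivalence.

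For the induced-spectrum equivalence $\elrN{K}\simeq \mN_+\sm_P \elrT{K}$, I give $\elrT{K}$ the $P$-action it inherits from $\elrP{K}$ via the first equivalence, and compute $\Phi^L(\mN_+\sm_P \elrP{K})$ by an argument analogous to Lemma \ref{lem:fpind}: this is nontrivially $S^0$ exactly when $L$ is $\mN$-subconjugate to an element of $(K)_P = \{K\}$, giving geometric isotropy $\{K_1, \ldots, K_s\}$ and local idempotence, so uniqueness yields the result. For the third equivalence I apply $\res^{\mN}_{\mT}$ to $\mN_+ \sm_P \elrP{K}$: since $\mT \subseteq P$ the left $\mT$-action on $\mN/P$ is trivial (for $t \in \mT$, $tgP = g(g^{-1}tg)P = gP$), and choosing coset representatives $g_1, \ldots, g_s$ for $\mN/P$ splits the spectrum as $\bigvee_{i=1}^s g_i^{*}(\res^P_{\mT}\elrP{K}) \simeq \bigvee_{i=1}^s \elrT{K_i}$, where conjugation by $g_i \in \mN$ identifies $g_i^{*}\elrT{K}$ with $\elrT{K_i}$ for $K_i = g_i K g_i^{-1}$. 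The main obstacle throughout is verifying the geometric-isotropy identifications; the Lie-theoretic fact linking toral $G$-conjugacy to $\mN$-conjugacy is the essential input, and once it is in hand the rest is bookkeeping with the uniqueness principle.
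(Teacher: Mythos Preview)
Your proof is correct and follows essentially the same strategy as the paper: identify the geometric isotropy of each side and invoke the uniqueness of locally idempotent spectra. The paper's proof is terser --- it declares the first equivalence ``clear'', cites \cite[IV.2.5]{BtD} for the Lie-theoretic fact that toral $G$-conjugacy implies $\mN$-conjugacy, and then constructs an explicit $P$-map $\elrT{K}\to\elrG{K}$ to induce the equivalence $\mN_+\sm_P\elrT{K}\simeq\elrN{K}$ --- whereas you give a self-contained argument for the conjugacy fact and lean more systematically on the uniqueness principle rather than writing down maps. Both routes amount to the same verification of geometric isotropy.

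One small correction: in your conjugacy argument the second maximal torus of $N_G(L)$ should be $\mT^{g}$ rather than $\mT^{g^{-1}}$ (with the paper's convention $X^w=w^{-1}Xw$, from $K\subseteq\mT$ and $L=K^g$ one gets $L\subseteq\mT^g$). The rest of the argument --- conjugating $\mT^g$ to $\mT$ by some $h\in N_G(L)$ so that $gh\in\mN$ and $K^{gh}=L$ --- is unaffected.
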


\begin{remark}
The idempotent in the second statement  is necessary. Consider the special case of $G=SO(3)$,
where $\mN =O(2)$ and $\mT= SO(2)$. The dihedral group of order 2
in $O(2)$ is not conjugate in $O(2)$ to a subgroup of $\mT$, but in
$SO(3)$ it is. 
\end{remark}
\begin{proof}
The first equivalence is clear. 

Two subgroups of $\mT$ which are conjugate in $G$ are conjugate in $\mN$ (the proof for elements in
\cite[IV.2.5]{BtD} applies to cover non-cyclic subgroups of
$\mT$). The geometric isotropy of $\elrG{K}$ is the single conjugacy
class $(K)_G$. The part lying in $\mT$ is the $\mN$-conjugacy class. 

Now there is a natural map of $\mN \cap N_G(K)$-spaces $\elrT{K}\lra
\elrG{K}$ which is $\{ K\} \lra (K)_{\mN }$ on supports. Since $\mT$
centralises $K$ this extends to $\mN \times_{\mN \cap N_G(K)}\{K\}\cong
(K)_{\mN}$. 
\end{proof}

The interaction with coinduction is also important. The point to note is
that in coinducing from $E_{\mT}\lr{K}$ there are three significant
stopping points: $P=N_{\mN}K$ (since $\{K \}=(K)_{\mT}=(K)_{P}$), $\mN$ and $G$.

\begin{lemma}
\label{lem:coindelrT}
If $K$ is a subgroup of $\mT$  and $P=N_{\mN}K$ then 
we have the following two equivalence of $P$-spectra
$$F_{\mT}(P_+ , E_{\mT}\lr{K})\simeq P/\mT_+\sm E_{P}\lr{K}.$$
and
$$F_{\mT}(\mN_+ , E_{\mT}\lr{K})\simeq P/\mT_+ \sm \elrN{K}.$$
\end{lemma}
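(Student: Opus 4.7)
The plan is to prove both equivalences using the Wirthm\"uller isomorphism and the projection formula. Since $P/\mT$ and $\mN/\mT$ are finite (they embed into the Weyl group $\mW G$), the relevant adjoint representations vanish and the Wirthm\"uller isomorphism simplifies to
\[
F_{\mT}(H_+, X) \simeq H_+ \sm_{\mT} X
\]
for $H = P$ or $H = \mN$. In particular, taking $X = S^0$ gives $F_{\mT}(P_+, S^0) \simeq P/\mT_+$.

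\textbf{First equivalence.} Since $P = N_{\mN}K$ normalises $K$, the single $P$-conjugacy class $(K)_P = \{K\}$ restricts to the single $\mT$-conjugacy class $\{K\}$, so $\res^P_{\mT} E_P\lr{K} \simeq E_{\mT}\lr{K}$ (this is immediate from the geometric isotropy characterisation, and is also implicit in Lemma \ref{lem:resegk}). Applying the projection formula $F_{\mT}(P_+, \res^P_{\mT} Y) \simeq F_{\mT}(P_+, S^0) \sm Y$ with $Y = E_P\lr{K}$ together with the Wirthm\"uller identification of $F_{\mT}(P_+, S^0)$ gives
\[
F_{\mT}(P_+, E_{\mT}\lr{K}) \simeq F_{\mT}(P_+, S^0) \sm E_P\lr{K} \simeq P/\mT_+ \sm E_P\lr{K}.
\]

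\textbf{Second equivalence.} Using transitivity of coinduction together with the first equivalence,
\[
F_{\mT}(\mN_+, E_{\mT}\lr{K}) \simeq F_P(\mN_+, F_{\mT}(P_+, E_{\mT}\lr{K})) \simeq F_P(\mN_+, P/\mT_+ \sm E_P\lr{K}).
\]
I would then argue that the factor $P/\mT_+$ commutes past $F_P(\mN_+, -)$, giving
\[
F_P(\mN_+, P/\mT_+ \sm E_P\lr{K}) \simeq P/\mT_+ \sm F_P(\mN_+, E_P\lr{K})
\]
as $P$-spectra. Combined with the Wirthm\"uller isomorphism $F_P(\mN_+, E_P\lr{K}) \simeq \mN_+ \sm_P E_P\lr{K}$ and the identification $\mN_+ \sm_P E_P\lr{K} \simeq E_{\mN}\lr{K}$ from Lemma \ref{lem:resegk}, this yields the desired equivalence.

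\textbf{Main obstacle.} The hard step is the commutation in the second equivalence. The projection formula does not directly apply because $P/\mT_+$ is not the restriction of an $\mN$-spectrum (one has $\res^{\mN}_P(\mN/\mT_+) \simeq \bigvee_{i=1}^s P/\mT_+$, not a single copy). However, $P/\mT_+$ is a $P$-equivariant retract of $\res^{\mN}_P(\mN/\mT_+)$, via the inclusion $P/\mT \hookrightarrow \mN/\mT$ together with the $P$-equivariant collapse sending $P/\mT$ to itself and all other cosets to the basepoint. The projection formula applies cleanly with $\mN/\mT_+$ in place of $P/\mT_+$, and naturality of that identification along the retraction gives the required equivalence. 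As an alternative verification, both sides are locally idempotent-type $P$-spectra with support on the $\mN$-conjugacy class $\{K_1, \dots, K_s\}$; a direct computation of geometric fixed points at each $K_i$ (using Lemma \ref{lem:fpind} on the LHS and the formulae for $\Phi^{K_i}$ of a smash on the RHS) shows both give $(P/\mT)_+$ as $W_P(K_i)$-spectra, which suffices rationally.
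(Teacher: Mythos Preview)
Your argument is essentially the paper's. The paper labels the first equivalence a ``standard untwisting result'' (which is precisely your projection-formula-plus-Wirthm\"uller argument), and for the second runs the same chain: transitivity of coinduction, the first equivalence, Wirthm\"uller for the finite-index inclusion $P\subset\mN$, a commutation of $P/\mT_+$ past induction/coinduction, and finally Lemma~\ref{lem:resegk}. The only cosmetic difference is the order: the paper applies Wirthm\"uller \emph{before} the commutation (writing the step as $\mN_+\sm_P(P/\mT_+\sm \elrP{K})\simeq P/\mT_+\sm(\mN_+\sm_P \elrP{K})$), whereas you commute first; since Wirthm\"uller identifies $F_P(\mN_+,-)$ with $\mN_+\sm_P-$ here, it is literally the same step.

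You are right that this commutation is the non-formal point, and the paper simply asserts it. However, neither of your two justifications is quite complete. For the retract argument: the projection-formula isomorphism $F_P(\mN_+,\res^\mN_P A\sm B)\simeq A\sm F_P(\mN_+,B)$ is natural in $\mN$-maps of $A$, but your retraction $P/\mT_+\to\res^\mN_P(\mN/\mT_+)\to P/\mT_+$ is only $P$-equivariant, so you cannot transport the identification along it without further work. For the geometric-fixed-points argument: matching $\Phi^{K_i}$ on both sides does not by itself produce a comparison map, so ``which suffices rationally'' is too quick. A clean fix is a direct Mackey/double-coset computation: rewrite both sides as $P_+\sm_\mT(\res^\mN_\mT\,-)$ using $P/\mT_+\sm Y\simeq P_+\sm_\mT\res^P_\mT Y$, and then both $\res^\mN_P(\mN_+\sm_\mT \elrT{K})$ and $P_+\sm_\mT\res^\mN_\mT\elrN{K}$ become $\bigvee_{i=1}^s P_+\sm_\mT \elrT{K_i}$, giving the desired equivalence.
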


\begin{proof}
The first statement is a standard untwisting result. 

For the second, we calculate
$$\begin{array}{rcl}
F_{\mT}(\mN_+, \elrT{K}) &\simeq &F_{P}(\mN_+,
F_{\mT}(P_+, \elrT{K})\\
&\simeq &F_{P}(\mN_+, P/\mT_+\sm \elrP{K})\\
&\simeq &\mN_+ \sm_P P/\mT_+ \sm \elrP{K}\\
&\simeq &P/\mT_+ \sm \mN_+ \sm_P \elrP{K}\\
&\simeq &P/\mT_+ \sm \elrN{K},\\
\end{array}$$
where the final equivalence comes from Lemma \ref{lem:resegk}.
\end{proof}

Coinducing up to $G$ has little effect. 

\begin{lemma}
\label{lem:coindenk}
There is an equivalence 
$$\elrG{K}\simeq F_{\mN}(G_+, e_{(\mT)}\elrG{K}) \simeq F_{\mN}(G_+,
\elrN{K}). \qqed$$
\end{lemma}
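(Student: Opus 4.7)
The statement contains two equivalences. For the second, $F_{\mN}(G_+, e_{(\mT)}\elrG{K}) \simeq F_{\mN}(G_+, \elrN{K})$, I read the notation $e_{(\mT)}\elrG{K}$ as the $A(\mN)$-idempotent applied after restriction (since $e_{(\mT)}$ applied to the already toral $G$-spectrum $\elrG{K}$ would be the identity). With this reading, the equivalence is immediate from Lemma~\ref{lem:resegk}, which gives $e_{(\mT)}\res^G_{\mN}\elrG{K} \simeq \elrN{K}$.

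For the first equivalence $\elrG{K} \simeq F_{\mN}(G_+, \elrN{K})$, my plan is to appeal to the uniqueness of idempotent spectra with prescribed geometric isotropy from Subsection~\ref{subsec:idempconj}: any locally idempotent $G$-spectrum with geometric isotropy equal to the single conjugacy class $(K)_G$ is equivalent to $\elrG{K}$. So it suffices to verify that $F_{\mN}(G_+, \elrN{K})$ is locally idempotent with geometric isotropy $(K)_G$, and then to produce a non-trivial map $\elrG{K} \to F_{\mN}(G_+, \elrN{K})$ (which by the idempotent characterisation is automatically an equivalence). The natural candidate is the composite
$$\elrG{K} \stacklra{\eta} F_{\mN}(G_+, \res^G_{\mN}\elrG{K}) \lra F_{\mN}(G_+, e_{(\mT)}\res^G_{\mN}\elrG{K}) \simeq F_{\mN}(G_+, \elrN{K}),$$
where $\eta$ is the unit of the $\res^G_{\mN} \dashv F_{\mN}(G_+, -)$ adjunction and the second map is induced by the splitting of Lemma~\ref{lem:resegk}.

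To compute $\Phi^{L'}$ of $F_{\mN}(G_+, \elrN{K})$ for $L' \subseteq G$, I would apply the Wirthm\"uller equivalence $F_{\mN}(G_+, Y) \simeq G_+ \sm_{\mN}(Y \sm S^{-L(G/\mN)})$ and use the $\mN$-analogue of Lemma~\ref{lem:fpind} for geometric fixed points of induced spectra. This expresses $\Phi^{L'}$ as a wedge over cosets $[g] \in L'\backslash G/\mN$ with $L'^g \subseteq \mN$, of terms involving $\Phi^{L'^g}(\elrN{K} \sm S^{-L(G/\mN)})$. Since $\elrN{K}$ has $\mN$-geometric isotropy only on the single $\mN$-conjugacy class of $K$, only cosets with $L'^g \sim_{\mN} K$ contribute, forcing $L' \sim_G K$; conversely for such $L'$ (which we may assume to be $K$ itself), the cosets contributing correspond to the $\mN$-orbit representatives of $K$ inside its $G$-orbit, and by the argument of Lemma~\ref{lem:coindelrT} reduce to a single contribution.

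The main obstacle will be the adjoint-representation bookkeeping for $S^{-L(G/\mN)}$: one must check that the twist contributes trivially at the relevant coset (the standard cancellation coming from $L(G/\mN) = LG/L\mT$ being rationally carried by the Weyl group action, together with Lemma~\ref{lem:suspLie}), so that the surviving $\Phi^K$ is a single copy of $S^0$. This both establishes local idempotency and shows the composite $\tilde\eta$ induces a non-zero map on $\pi_0\Phi^K$ (since $\eta$ is the adjunction unit, it restricts to the identity on the $\mN$-equivariant $K$-fixed-point level), completing the proof.
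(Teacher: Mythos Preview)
The paper states this lemma with a terminal \verb|\qqed| and gives no proof; it is treated as immediate from the surrounding material. Your reading of $e_{(\mT)}$ as the $\mN$-idempotent applied after restriction is correct (the argument of $F_{\mN}(G_+,-)$ must be an $\mN$-spectrum), and your derivation of the second equivalence from Lemma~\ref{lem:resegk} is exactly right.

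For the first equivalence your plan is reasonable, but the route through Wirthm\"uller and literal local idempotency is more delicate than your final paragraph suggests. The twist $S^{-L(G/\mN)}$ has $K$-fixed part $S^{-(LG/L\mT)^K}$, which is nonzero precisely when $K$ lies in a root hyperplane, so one does not directly get $\Phi^K$ equal to $S^0$ from that formula; your appeal to Lemma~\ref{lem:suspLie} does not obviously cancel this. Two cleaner alternatives, both in the spirit of Section~\ref{sec:objects}:
\begin{itemize}
\item Use the projection formula to see the target already lives over $(K)_G$: since $\elrN{K}$ is toral in $\mN$, one has $\res\elrG{K}\sm\elrN{K}\simeq e_{(\mT)}\res\elrG{K}\sm\elrN{K}\simeq\elrN{K}$, hence $\elrG{K}\sm F_{\mN}(G_+,\elrN{K})\simeq F_{\mN}(G_+,\elrN{K})$. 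Both sides of the lemma are therefore $(K)_G$-concentrated, and one may pass to $\Phi^K$ as $W_GK$-spectra.
\item Under $\Phi^K$, the statement becomes $E\Gamma_+\simeq F_{\mN\Gamma}(\Gamma_+,E(\mN\Gamma)_+)$ for $\Gamma=W_GK$, using Lemma~\ref{lem:WGK} to identify $W_{\mN}K$ with $\mN(W_GK)$. This is the case $K=1$ of the lemma, and there both sides are \emph{free} $\Gamma$-spectra with contractible underlying space, so the unit of the adjunction is an equivalence (this is the fact, rationally immediate from Lemma~\ref{lem:class}(ii), that $\Gamma/\mN\Gamma$ is stably trivial).
\end{itemize}
Either of these makes the adjoint-representation bookkeeping disappear, and is presumably closer to what the author regards as obvious enough for a \verb|\qqed|.
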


\subsection{Realizing injectives}
Again we rely on  \cite{tnq1}, which  shows  that in  $\cA (\mT)$ the
basic injective with support $K\subset \mT$ corresponds to the space $\elr{K}$.
More precisely, 
$$\piAT_*(E_{\mT}\lr{K})=f_K^{\mT}(H_*((B\mT/K)^{L\mT /K}))$$
where $f_K^{\mT}$ is right adjoint to evaluation at $K$ as before. 
Since we have now catalogued behaviour under change of groups in
algebra and topology, we can now read off the values we require. 

\begin{cor}
\label{cor:piAegk}
The images of $\elrG{K}$ in $\cA (N,toral)$ and $\cA (G,toral)$ are
given by the formulae
$$\piAN_*\elrG{K}= f^{\mN}_{(K)}(H_*((B\mT /K)^{L(\mT/K)}))$$
and 
$$\piAG_*\elrG{K}=f^G_{(K)}(H_*((BW_G^eK)^{L W_G^eK})), $$
where $f^{\mN}_{(K)}$ and $f_{(K)}^G$ are right adjoint to evaluation at $K$.
\end{cor}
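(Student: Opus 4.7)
My plan is to handle the two formulae in sequence, bootstrapping upward from the known $\mT$-equivariant statement $\piAT_*(\elrT{K}) = f_K^{\mT}(H_*((B\mT/K)^{L\mT/K}))$ recalled just before the corollary.

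For the $\mN$-formula, the first move is to observe that $\piAN_*\elrG{K}=\piAN_*\elrN{K}$: Lemma \ref{lem:resegk} gives $e_{(\mT)}\res^G_{\mN}\elrG{K}\simeq \elrN{K}$, and $\piAN_*$ is insensitive to the non-toral part since its values are computed at toral subgroups. Restricting further to $\mT$, the same lemma yields $\res^{\mN}_{\mT}\elrN{K}\simeq \bigvee_i\elrT{K_i}$, where the $K_i$ run over the $\mN$-conjugacy class $(K)$. Plugging in the known $\mT$-formula summand-by-summand and tracking how $\mW G$ permutes the orbit assembles the pieces into a $\mW G$-equivariant object of $\cA(\mT)$ which, by the adjunction displayed in Section \ref{subsec:fK}, is precisely the value of the right adjoint $f_{(K)}^{\mN}$ applied to the $H^*(B\mT/K)[(\mW G)_K]$-module $H_*((B\mT/K)^{L\mT/K})$.

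For the $G$-formula, I will invoke Lemma \ref{lem:coindenk} to write $\elrG{K}\simeq F_{\mN}(G_+,\elrN{K})$ and then apply Proposition \ref{prop:modelofcoindNG}, which gives
$$\piAG_*\elrG{K} \;=\; \Psi\,\piAN_*\elrN{K}\;=\;\Psi\, f_{(K)}^{\mN}(H_*((B\mT/K)^{L\mT/K})).$$
On the other hand, the identity $f_{(K)}^{G}(M)=\Psi f_{(K)}^{\mN}(\theta_*M)$ from Section \ref{subsec:fK} rewrites the target of the claim as $\Psi f_{(K)}^{\mN}(H^*(B\mT/K)\tensor_{H^*(BW_G^eK)}H_*((BW_G^eK)^{LW_G^eK}))$. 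So the $G$-formula reduces to the algebraic identification
$$H_*((B\mT/K)^{L\mT/K})\;\cong\; H^*(B\mT/K)\tensor_{H^*(BW_G^eK)}H_*((BW_G^eK)^{LW_G^eK})$$
as $H^*(B\mT/K)[(\mW G)_K]$-modules.

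The hard part is this last algebraic identity, which is the normality property of the basic injective $H_*((B\mT/K)^{L\mT/K})$ sitting at the heart of the proof of Proposition \ref{prop:Psi}. My intended approach is first to apply Lemma \ref{lem:suspLie} to the connected compact Lie group $W_G^eK$ (with maximal torus $\mT/K$ and toral Weyl group $\mW W_G^eK$) to obtain $H_*((BW_G^eK)^{LW_G^eK})\cong H_*((B\mT/K)^{L\mT/K})^{\mW W_G^eK}$, and then to exploit the freeness of $H^*(B\mT/K)$ over $H^*(BW_G^eK)$ together with the rational exactness of $\mW W_G^eK$-fixed points to promote this isomorphism on invariants to one on the whole module. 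The delicacy, as warned by the $SO(3)$-example following Lemma \ref{lem:RRG}, is that the twist by the adjoint representation is essential: the untwisted module $H_*(B\mT/K)$ itself is not normal, so the argument depends genuinely on the Thom-isomorphism form of the module.
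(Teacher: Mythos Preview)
Your proposal is correct and follows the same overall architecture as the paper: for the $\mN$-formula both you and the paper reduce to the known $\mT$-equivariant computation (the paper just evaluates at $K$ directly and invokes equivariance, rather than writing out the wedge decomposition, but the content is identical), and for the $G$-formula both of you invoke Lemma~\ref{lem:coindenk} and then Lemma~\ref{lem:suspLie}.

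There is one place where the paper takes a shorter path. For the $G$-formula you rewrite the target via the defining formula $f^G_{(K)}(M)=\Psi f^{\mN}_{(K)}(\theta_*M)$ and are then forced to prove the normality identity $H_*((B\mT/K)^{L\mT/K})\cong \theta_*\,H_*((BW_G^eK)^{LW_G^eK})$, which as you correctly note is the delicate point underlying Proposition~\ref{prop:Psi} and genuinely needs the Koszul-complex argument there (your phrase ``freeness together with rational exactness of fixed points'' is not by itself enough, as the $(c)$-example shows). The paper instead uses the \emph{commutative square} recorded in the lemma of Section~\ref{subsec:fK}, namely $\Psi\circ f^{\mN}_{(K)} = f^G_{(K)}\circ \Psi^{\mW W_G^eK}$, which holds for \emph{all} torsion $H^*(B\mT/K)[(\mW G)_K]$-modules and is a purely formal consequence of the adjunctions. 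Applying that square turns $\Psi f^{\mN}_{(K)}(I)$ directly into $f^G_{(K)}(I^{\mW W_G^eK})$, and then Lemma~\ref{lem:suspLie} identifies $I^{\mW W_G^eK}$ with $H_*((BW_G^eK)^{LW_G^eK})$; no normality of $I$ is needed. Your route works, but the paper's is cleaner because it sidesteps the special structure of the injective entirely.
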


\begin{proof}
We have constructed $\elrG{K}$ so that its geometric isotropy is
concentrated on $(K)_{\mN}$, so the module is concentrated on conjugates of
$K$ in $\mT$. In view of equivariance, we need only identify the value
at a single point in the orbit, and we find
$$\piAN_*(\elrG{K})(K) =\pi^{\mT /K}_*(DE\mT /K_+\sm E\mT /K_+)
=\pi^{\mT /K}_*(E\mT /K_+)=H_*(B\mT /K^{L (\mT /K}).$$

The second statement follows from the first using Lemma
\ref{lem:coindenk},  since  by Lemma \ref{lem:suspLie}
$$H_*((B\mT/K)^{L(\mT/K)})^{W_G^eK}\cong H_*((B\mT/K)^{L(\mT /K)})_{W_G^eK}\cong H_*(BW_G^eK)^{L(W_G^eK)}.$$
\end{proof}

We actually need slightly more general injectives, so that we can
embed  all representations  of $W_G^d(K)$. Of course there are many possible
choices. We could start from $\elrN{K}$ and coinduce,  but it turns
out that the proof is slightly streamlined by starting from
$\elrT{K}$. We give the calculations for both by way of comparison. 

\begin{cor}
\label{cor:piAegkplus}
The images of $\mN/\mT_+\sm \elrN{K}$ in $\cA (N,toral)$ and its
coinduced spectrum $F_{\mT}(G_+,  \elrN{K})$ in  
$\cA (G,toral)$ are given by the formulae
$$\piAN_*(\mN/\mT_+\sm \elrN{K})= f^{\mN}_{(K)}(\Q[ \mW G]\tensor H_*((B\mT /K)^{L(\mT/K)}))$$
and 
\begin{multline*}
\piAG_*(F_{\mT}(G_+, \elrN{K}))= \Psi f^{\mN}_{(K)}(\Q[ \mW
G]\tensor H_*((B\mT /K)^{L(\mT/K)}))\\
= f^{G}_{(K)}(\Q[ \mW G/WG_K^e]\tensor H_*((B\mT /K)^{L\mT /K})), 
\end{multline*}
where $f^{\mN}_{(K)}$  and $f^G_{(K)}$ are right adjoints to evaluation at $K$.
\end{cor}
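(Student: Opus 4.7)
The approach is to derive both parts from Corollary \ref{cor:piAegk} by a change-of-groups chase, using Proposition \ref{prop:modelofcoindNG} and the explicit formula for $\Psi$.

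For Part 1, I would exploit the fact that $\piAN_*$ is a homology theory (it is built from $b^{\mT/K}_*(\Phi^K(-))$, which preserves wedges and filtered colimits), and that $\mN/\mT_+$ is a finite free $\mW G$-spectrum equivalent non-equivariantly to a wedge of $|\mW G|$ copies of $S^0$. Smashing with $\mN/\mT_+$ therefore tensors the homotopy of $\elrN{K}$ with $\Q[\mW G]$ (carrying the regular $\mW G$-action on the tensor factor). Coupled with the formula $\piAN_*(\elrN{K}) = f^{\mN}_{(K)}(H_*((B\mT/K)^{L(\mT/K)}))$ from Corollary \ref{cor:piAegk} and the compatibility of $f^{\mN}_{(K)}$ with tensoring on coefficients, this yields the stated identity.

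For the first equality in Part 2, I would first identify $F_{\mT}(G_+, \elrN{K})$ with $F_{\mN}(G_+, \mN/\mT_+ \sm \elrN{K})$ by chaining adjunctions: one has $F_{\mT}(G_+, X) \simeq F_{\mN}(G_+, F_{\mT}(\mN_+, X))$ together with the Wirthm\"uller identification $F_{\mT}(\mN_+, A) \simeq \mN/\mT_+ \sm A$ valid for any $\mN$-spectrum $A$ (since $\mW G = \mN/\mT$ is finite discrete, so no orientation twist arises). Proposition \ref{prop:modelofcoindNG} then gives
$$\piAG_*(F_{\mN}(G_+, \mN/\mT_+ \sm \elrN{K})) = \Psi\, \piAN_*(\mN/\mT_+ \sm \elrN{K}),$$
and Part 1 identifies the right hand side as $\Psi f^{\mN}_{(K)}(\Q[\mW G] \tensor H_*((B\mT/K)^{L(\mT/K)}))$.

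For the second equality, I would unpack $\Psi$ together with the definition $f^G_{(K)} = \Psi \circ f^{\mN}_{(K)} \circ \theta_*$. Since $\Psi$ is taking $(\mW G)_K^e = \mW W_G^eK$ invariants on the stalk at $K$, and since $\mW W_G^eK$ acts trivially on $H_*((B\mT/K)^{L(\mT/K)})$ after invoking Lemma \ref{lem:suspLie} (which identifies $H_*((B\mT/K)^{L(\mT/K)})^{\mW W_G^eK}$ with $H_*((BW_G^eK)^{LW_G^eK})$, an $\RRinv$-module carrying a $W_G^dK$-action), the $(\mW W_G^eK)$-invariants split off the first tensor factor to give $\Q[\mW G / WG_K^e] \tensor H_*((B\mT/K)^{L(\mT/K)})$, which then repackages as $f^G_{(K)}$ of the asserted input. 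The main obstacle will be careful bookkeeping of the $(\mW G)_K$-equivariance on the $\Q[\mW G]$ factor: verifying that $\mW W_G^eK$-invariants really produce the coset space $\mW G/WG_K^e$ with the correct sidedness, and checking that tensoring with $\Q[\mW G]$ commutes with $f^{\mN}_{(K)}$ so the identity holds at all flags and not merely at $K$ itself.
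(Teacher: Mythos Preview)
Your approach is correct and matches the paper's, which is quite brief: Part~1 follows from Corollary~\ref{cor:piAegk} by noting that smashing with $\mN/\mT_+$ tensors with $\Q[\mW G]$, and Part~2 follows because $\Psi$ models coinduction from $\mN$ to $G$ (Proposition~\ref{prop:modelofcoindNG}), together with a short observation comparing $\Psi f^{\mN}_{(K)}(N)$ with $f^G_{(K)}(\Psi N)$.

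Your version is more explicit than the paper in spelling out the identification $F_{\mT}(G_+,\elrN{K})\simeq F_{\mN}(G_+,\mN/\mT_+\sm\elrN{K})$ via the composite adjunction and Wirthm\"uller for the finite-index inclusion $\mT\subset\mN$; the paper leaves this implicit. One small correction: the invocation of Lemma~\ref{lem:suspLie} and the phrase ``acts trivially'' are not quite to the point here. The second equality does not rest on any statement about adjoint representations; it is a pure freeness argument. Since $\Q[\mW G]$ is free as a $\Q[(\mW G)_K^e]$-module, the $(\mW G)_K^e$-invariants of the diagonal action on $\Q[\mW G]\tensor H_*((B\mT/K)^{L(\mT/K)})$ are canonically $\Q[\mW G/(\mW G)_K^e]\tensor H_*((B\mT/K)^{L(\mT/K)})$, with the second factor unchanged. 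The paper records this via the natural transformation $f^G_{(K)}(\Psi N)\to\Psi f^{\mN}_{(K)}(N)$ being the identity at $K$.
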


\begin{proof}
The statement for $\mN$ follows easily from the previous corollary,
recalling from Subsection \ref{subsec:fK} that modules
over $(K)_{\mN}$ are determined from their value over $K$ by conjugation. 

The statement for $G$ follows since $\Psi$ models coinduction as in
Proposition \ref{prop:modelofcoindNG}.

We note that if $N$ is a $H^*(B\mT /K)[\mW G_K]$-module, there is a
natural transformation 
$$f^G_{(K)}(\Psi N)=\Psi f^{\mN}_{(K)}(\theta_*\Psi N)\lra \Psi f^{\mN}_{(K)}(N),  $$
when evaluated at $K$ the comparison is the identity
$$\Psi \theta_*\Psi N\lra \Psi N. $$
\end{proof}

The values that we will actually use in the proofs are as follows. 

\begin{cor}
\label{cor:piAcoind}
The images of the coinduction of $\elrT{K}$ to $\mN$-spectra and $G$-spectra  in the algebraic
categories is given by 
$\cA (G,toral)$ are given by the formulae
$$\piAN_*(F_{\mT}(\mN_+, \elrT{K})) = 
f^{\mN}_{(K)}(\Q[ (\mW G)_K]\tensor H_*((B\mT /K)^{L(\mT/K)}))$$
and 
\begin{multline*}
\piAG_*(F_{\mT}(G_+, \elrT{K}))= \Psi f^{\mN}_{(K)}(\Q[ (\mW
G)_K]\tensor H_*((B\mT /K)^{L(\mT/K)}))\\
= f^{G}_{(K)}(\Q[ W_G^dK]\tensor H_*((B\mT /K)^{L\mT /K})), 
\end{multline*}
where $f^{\mN}_{(K)}$  and $f^G_{(K)}$ are right adjoints to evaluation at $K$.
\end{cor}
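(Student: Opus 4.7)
The plan is to mirror the proofs of Corollaries \ref{cor:piAegk} and \ref{cor:piAegkplus}, substituting $\elrT{K}$ for $\elrN{K}$ and using Lemma \ref{lem:coindelrT} to translate the coinduced spectra into smash products of more elementary pieces.

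For the $\mN$-formula, the second equivalence in Lemma \ref{lem:coindelrT} gives
\[ F_{\mT}(\mN_+, \elrT{K}) \simeq P/\mT_+ \sm \elrN{K} \]
as $\mN$-spectra, where $P=N_{\mN}K$; Lemma \ref{lem:WGK} identifies the discrete finite set $P/\mT$ with $(\mW G)_K$. Since $\mT\subseteq P$ acts trivially on $P/\mT$, each functor $\Phi^L$ with $L\subseteq \mT$ commutes with $P/\mT_+\sm(-)$, so that
\[ \piAN_*\bigl(P/\mT_+ \sm \elrN{K}\bigr) \;\cong\; \Q[(\mW G)_K]\otimes \piAN_*(\elrN{K}). \]
Substituting the expression for $\piAN_*(\elrN{K})$ from Corollary \ref{cor:piAegk} gives the first displayed formula.

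For the $G$-formula, transitivity of coinduction rewrites $F_{\mT}(G_+, \elrT{K})$ as $F_{\mN}(G_+, F_{\mT}(\mN_+, \elrT{K}))$, and Proposition \ref{prop:modelofcoindNG} (which says $\piAG_*\circ F_{\mN}(G_+,-)\cong \Psi\circ\piAN_*$) yields the middle expression at once. The second equality, identifying this with $f^G_{(K)}(\Q[W_G^dK]\otimes H_*((B\mT/K)^{L\mT/K}))$, is an instance of the commutative diagram $f^G_{(K)}=\Psi\circ f^{\mN}_{(K)}\circ\theta_*$ from Subsection \ref{subsec:fK}, reduced to the natural isomorphism
\[ \theta_*\bigl(\Q[W_G^dK]\otimes H_*((B\mT/K)^{L\mT/K})\bigr) \;\cong\; \Q[(\mW G)_K]\otimes H_*((B\mT/K)^{L(\mT/K)}) \]
of $H^*(B\mT/K)[(\mW G)_K]$-modules. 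This is precisely the kind of comparison map recorded at the end of the proof of Corollary \ref{cor:piAegkplus}, and is essentially the same identification invoked there.

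The anticipated main obstacle is this final module identification: one must track both the $H^*(B\mT/K)$-action and the $(\mW G)_K$-equivariance simultaneously through the tensor product $H^*(B\mT/K)\otimes_{H^*(BW_G^eK)} \Q[W_G^dK]$. Dimensions match since $|(\mW G)_K^e|\cdot|W_G^dK|=|(\mW G)_K|$, and the structural identification rests on the fact that $H^*(B\mT/K)$ is free of rank $|(\mW G)_K^e|=|\mW W_G^eK|$ over $H^*(BW_G^eK)$, with the action of $(\mW G)_K^e$ on a basis giving, after summing over $W_G^dK$-coset representatives, the $(\mW G)_K$-action on $\Q[(\mW G)_K]$.
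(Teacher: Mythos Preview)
Your proposal is correct and follows essentially the same route as the paper: for the $\mN$-statement you use Lemma \ref{lem:coindelrT} and the identification $P/\mT=(\mW G)_K$ exactly as the paper does, and for the $G$-statement you use transitivity of coinduction together with Proposition \ref{prop:modelofcoindNG}, which is what the paper means by ``follows as in the proof of Corollary \ref{cor:piAegkplus}.''

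The only difference worth noting concerns your ``anticipated main obstacle.'' You propose to verify directly that $\theta_*\bigl(\Q[W_G^dK]\otimes H_*((B\mT/K)^{L\mT/K})\bigr)\cong \Q[(\mW G)_K]\otimes H_*((B\mT/K)^{L(\mT/K)})$ as $(\mW G)_K$-equivariant $H^*(B\mT/K)$-modules. The paper sidesteps this bare-hands computation: at the end of the proof of Corollary \ref{cor:piAegkplus} it records the natural transformation $f^G_{(K)}(\Psi N)=\Psi f^{\mN}_{(K)}(\theta_*\Psi N)\to \Psi f^{\mN}_{(K)}(N)$ and observes that evaluated at $K$ this is just $\Psi\theta_*\Psi N\to \Psi N$, which is the identity (the unit of the $\theta_*\dashv\Psi$ adjunction is an isomorphism by Proposition \ref{prop:Psitheta}). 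So there is no equivariance to track beyond what is already packaged in that adjunction; your concern, while legitimate for a direct argument, is unnecessary once you adopt the paper's formulation.
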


\begin{proof}
The first statement follows from Lemma \ref{lem:coindelrT}, noting that
$(\mW G)_K=N_{\mN}K/\mT =P/\mT$. 

The second statement follows as in the proof of Corollary
\ref{cor:piAegkplus}. 
\end{proof}

\section{Maps into injectives}
\label{sec:mapstoinj}

In this section we give control over maps to realizable injectives by
proving Proposition \ref{prop:mapstoinj}. Since this is where we get control
over the maps in our category, it is perhaps not surprising that it is
the most delicate part of the argument. 

\begin{prop}
\label{prop:mapstoinjproved}
If $\bbI$ is a $G$-spectrum realizing one of the injectives $I$ constructed in the proof of Lemma
\ref{lem:realinj}, then we have an isomorphism 
$$\piAG_* :[X, \bbI ]^G\lra \Hom_{\cA (G, toral)} (\piAG_*(X),
\piAG_*(\bbI)) =\Hom_{\cA (G, toral)} (\piAG_*(X), I). $$
\end{prop}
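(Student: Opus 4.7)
The strategy is to reduce both sides through a chain of adjunctions from $G$ down to $\mT$, and then invoke the foundational $\mT$-equivariant Adams-type theorem of \cite{tnqcore}. The injectives $\bbI$ constructed in Section \ref{sec:objects} all have the form $\bbI = F_{\mT}(G_+, \elrT{K})$, with $\piAG_*(\bbI) = f^{G}_{(K)}(\Q[W_G^dK]\tensor H_*((B\mT/K)^{L\mT/K}))$ by Corollary \ref{cor:piAcoind}. So the basic injectives are themselves coinduced from a very simple $\mT$-spectrum, which is what makes the reduction possible.

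On the topological side, I will use the coinduction--restriction adjunction: the equivalences $F_{\mT}(G_+, -) \simeq F_{\mN}(G_+, F_{\mT}(\mN_+, -))$ give
$$[X, F_{\mT}(G_+, \elrT{K})]^{G} \;\cong\; [\res^{G}_{\mN} X, F_{\mT}(\mN_+, \elrT{K})]^{\mN} \;\cong\; [\res^{G}_{\mT} X, \elrT{K}]^{\mT}.$$
The foundational $\mT$-equivariant result of \cite{tnqcore} then identifies this last group with $\Hom_{\cA(\mT)}(\piAT_*(\res^{G}_{\mT} X), \piAT_*\elrT{K})$, since $\piAT_*\elrT{K} = f^{\mT}_K(H_*((B\mT/K)^{L\mT/K}))$ is an indecomposable basic injective in $\cA(\mT)$.

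On the algebraic side, I will dismantle $\Hom_{\cA(G,toral)}(\piAG_*X, \piAG_*\bbI)$ in three moves. First, by Proposition \ref{prop:modelofcoindNG} applied to $Y = F_{\mT}(\mN_+, \elrT{K})$, we have $\piAG_*(\bbI) = \Psi\,\piAN_*(F_{\mT}(\mN_+, \elrT{K}))$; then by Corollary \ref{cor:Psitheta} (the $\theta_* \dashv \Psi$ adjunction) together with Proposition \ref{prop:thetaPsionG} (which gives $\theta_*\piAG_*X = \piAN_*(\res^G_{\mN}X)$), we obtain
$$\Hom_{\cA(G,toral)}(\piAG_*X, \piAG_*\bbI) \;=\; \Hom_{\cA(\mN,toral)}(\piAN_*(\res^{G}_{\mN}X),\, \piAN_*(F_{\mT}(\mN_+, \elrT{K}))).$$
Second, since $\cA(\mN,toral) = \cA(\mT)[\mW G]$ by Lemma \ref{lem:ANtoral}, I will identify $\piAN_*(F_{\mT}(\mN_+, \elrT{K}))$ as a coinduced $\mW G$-object built from $\piAT_*\elrT{K}$. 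The explicit formula in Corollary \ref{cor:piAcoind} gives $f^{\mN}_{(K)}(\Q[(\mW G)_K]\tensor H_*((B\mT/K)^{L\mT/K}))$; unpacking this at the orbit $(K)$ and using that the forgetful functor $\cA(\mT)[\mW G] \to \cA(\mT)$ has right adjoint given by summing over the $\mW G$-orbit, this coincides with the coinduction of $\piAT_*\elrT{K}$. Third, the forgetful--coinduction adjunction then yields
$$\Hom_{\cA(\mT)[\mW G]}(\piAN_*(\res^G_{\mN}X),\, \mathrm{coind}\,\piAT_*\elrT{K}) \;=\; \Hom_{\cA(\mT)}(\piAT_*(\res^{G}_{\mT}X),\, \piAT_*\elrT{K}).$$

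Combining, both sides of the claim equal $\Hom_{\cA(\mT)}(\piAT_*(\res^{G}_{\mT}X), \piAT_*\elrT{K})$, and the final step is to check that the natural map $\piAG_*$ realizes the composite of these identifications -- this is a naturality chase using that each adjunction above is induced by the corresponding topological adjunction. The main obstacle will be the bookkeeping in the middle step: verifying that $f^{\mN}_{(K)}$ applied to a module of the form $\Q[(\mW G)_K]\tensor M$ is genuinely the coinduction from $\cA(\mT)$ to $\cA(\mT)[\mW G]$ of $f^{\mT}_K(M)$, which requires matching the permutation action of $\mW G$ on the conjugates $\{K_i\}$ with the regular representation of $(\mW G)_K$ by Frobenius reciprocity at the level of evaluation at $(K)$. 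Once this identification is in hand, the proof is formal.
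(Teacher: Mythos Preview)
Your approach is correct and is genuinely different from the paper's. You exploit that $\bbI=F_{\mT}(G_+,\elrT{K})$ is \emph{coinduced} and peel off the adjunctions on both sides until you land at the known $\mT$-equivariant isomorphism for the basic injective $\elrT{K}$. The paper instead first reduces to $X=G_+\sm_{\mT}A$ (both sides being cohomology theories in $X$), then uses the \emph{induction} adjunction on $X$; the price is that on the algebraic side one must show that the map $\alpha_*:\piAN_*(\mN_+\sm_{\mT}A)\to\piAN_*(G_+\sm_{\mT}A)$ becomes an isomorphism after $\Hom(-,\piAN_*\bbI)$, and this is done by an explicit Borel-cohomology computation ending in an appeal to Corollary~\ref{cor:fpNGamma}. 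Your route avoids that computation by invoking Proposition~\ref{prop:modelofcoindNG} (whose own proof already consumed Corollary~\ref{cor:fpNGamma}), so the same underlying fact is used, just earlier.

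Two points worth making precise in your write-up. First, the ``naturality chase'' is not content-free: you need that the squares of Propositions~\ref{prop:modelofresGN} and~\ref{prop:modelofcoindNG} are \emph{mates} under the adjunctions $\res\dashv F_{\mN}(G_+,-)$ and $\theta_*\dashv\Psi$. This follows because $\piAG_*=\Psi\,\piAN_*\,\res$ by definition and the isomorphism in Proposition~\ref{prop:modelofcoindNG} is constructed by applying $\Psi\piAN_*$ to the topological counit; a short triangle-identity check then gives the compatibility. Second, for the $\mN\to\mT$ step the identification $\piAN_*F_{\mT}(\mN_+,\elrT{K})\cong\mathrm{coind}_{\mT}^{\mN}\piAT_*\elrT{K}$ is most cleanly seen not via the formula of Corollary~\ref{cor:piAcoind} but by observing that $\res^{\mN}_{\mT}F_{\mT}(\mN_+,Y)\simeq\bigvee_{w\in\mW G}w^*Y$, so the underlying $\cA(\mT)$-object is $\bigoplus_w w^*\piAT_*Y$ with $\mW G$ permuting summands; this is exactly the coinduced object, and the identification is the canonical mate. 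With these two checks in hand your argument is complete and, for this statement, more streamlined than the paper's.
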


\begin{proof}
Since $I$ is injective,  both sides are cohomology theories of $X$, it suffices to prove
the result for $X=G/K_+$ where $K$ is a subgroup of $\mT$. In fact we
will prove it more generally for $X=G_+\sm_{\mT}A$ for some finite
$\mT$-spectrum $A$. We consider
the diagram 
$$\diagram 
[  G_+\sm_{\mT}A, \bbI ]^G\ddto^{\cong} \rto^(0.35){\piAG_*}&
\Hom_{\cA (G,toral)}(\piAG_*(G_+\sm_{\mT}A), \piAG_*(\bbI))\dto^= \\
&
\Hom_{\cA (G,toral)}(\Psi \piAN_*(G_+\sm_{\mT}A), \Psi \piAN_*(\bbI))\dto^a \\
[\mN_+\sm_{\mT}A, \bbI ]^{\mN}\ddto^{\cong} \rto^(0.35){\piAN_*}&\Hom_{\cA (\mN)}(
\piAN_*\mN_+\sm_{\mT} A, \piAN_*(\bbI)) \dto^=  \\
&\Hom_{\cA (\mT)}( \piAN_*\mN_+\sm_{\mT} A, \piAN_*(\bbI))^{\mW G}\dto^b\\
[A, \bbI ]^{\mT} \rto^(0.35){\piAT_*}_(0.35){\cong}&\Hom_{\cA (\mT)}(
\piAT_* A, \piAT_*(\bbI)) 
\enddiagram$$

The bottom horizontal is an isomorphism from the $\mT$-equivariant
Adams spectral sequence of \cite{tnq1}, since $\piAT_*(\bbI)$ is
injective.  The two left hand vertical isomorphisms come from the
induction-restriction adjunction. The two right hand vertical
isomorphisms are definitions. 

It therefore remains to describe the maps $a$ and $b$ so that the
diagram commutes and to show that $a$ and $b$ are isomorphisms. 

We will deal with $b$ first, because it is straightforward. Since $\piAN_*=\piAT_*$ if we ignore the $\mW
G$-action, we may take $b$ to be induced by the $\mT$-map $\beta:
A\lra \mN \sm_{\mT}A$. The diagram commutes, since by definition the left hand
vertical factors through the forgetful map 
$$[\mN_+\sm_{\mT} A,
\bbI]^{\mN}\lra [\mN_+\sm_{\mT} A, \bbI]^{\mT}. $$
The fact that $b$ is an isomorphism follows from a lemma. 

\begin{lemma}
The map $\beta$ induces an isomorphism 
$$\piAN_*(\mN_+\sm_{\mT}A)=\mW G\tensor \piAT_*(A), $$
where the functor on the right is the induction functor left adjoint
to restriction. \qqed
\end{lemma}
For the map $a$ we use the diagram
$$\diagram 
[  G_+\sm_{\mT}A, \bbI ]^G\dto \rto^(0.35){\piAG_*}&
\Hom_{\cA (G,toral)}(\Psi \piAN_*(G_+\sm_{\mT}A), \Psi
\piAN_*(\bbI))\dto^{\theta_*}_{\cong} \\
[  G_+\sm_{\mT}A, \bbI ]^{\mN}\dto^{\alpha^*} \rto^(0.35){\piAN_*}&
\Hom_{\cA (\mT)}( \piAN_*(G_+\sm_{\mT}A), \piAN_*(\bbI))^{\mW G}\dto^{(\alpha_*)^*}\\
[\mN_+\sm_{\mT}A, \bbI ]^{\mN} \rto^(0.35){\piAN_*}&\Hom_{\cA (\mT)}(
\piAN_*\mN_+\sm_{\mT} A, \piAN_*(\bbI))^{\mW G}
\enddiagram$$
We have used the fact that the counit is an isomorphism on restrictions from $G$ (Proposition
\ref{prop:thetaPsionG}) to identify the codomain of $\theta_*$ and to
see it is an isomorphism. 
In short, $a$ comes from the map 
$$\alpha_*: \piAN_*(\mN_+\sm_{\mT}A) \lra \piAN_*(G_+\sm_{\mT}A)$$
induced by the $\mN$-map $\alpha: \mN_+\sm_{\mT}A \lra G_+\sm_{\mT}A. $

We will show that $(\alpha_*)^*$ is an isomorphism, but we pause to
observe that this is fairly subtle, since the map $\alpha_*$ itself is usually not an isomorphism. 

\begin{example}
Consider the special case $\bbI =EG_+$ we have
$$\diagram
[G_+\sm_{\mT}A, EG_+]^G\rto^(0.3){\cong} \dto^=&
\Hom_{H^*(BG_e)}(H_{G_e}^*(D(G_+\sm_{\mT} A)), H_*(BG_e^{LG}))^{G_d}\dto^= \\
[\mN_+\sm_{\mT}A, EG_+]^{\mN}\rto^(0.3){\cong} \dto^=&
\Hom_{H^*(B\mT )}(H_{\mT}^*(D(\mN_+\sm_{\mT} A)), H_*(B\mT^{LT}))^{\mW G} \dto^=\\
[A, EG_+]^{\mT}\rto^(0.3){\cong} &
\Hom_{H^*(B\mT )}(H_{\mT}^*(D(A)), H_*(B\mT^{L\mT}) ).
\enddiagram$$
The reader may find it instructive to think how the suspensions match
up. 

More specifically still, we may take $G=SO(3)$, $\mN =O(2)$ and $\mT
=SO(2)$, with $A=S^0$. Of course  $H^*_{\mT}(DA)=H^*B\mT$ so that we
see from the bottom right that the value is $\Q$ in each positive degree and 0 elsewhere.  
At the top left, we use the fact that $H^*_{\mT}(DG/\mT_+)$ is a copy of $\Q$ in codegree $-2$ and a copy of 
$\Q \mW G$ in even codegrees $\geq 0$, and its ring of $\mW
G$-invariants $H^*_{G_e}(DG/\mT_+)$ is a free $H^*(BG_e)$-module on
generators of cohomological degrees $0$ and $-2$. \qqed
\end{example}

To make further progress, it is convenient to make a specific choice for $\bbI$.
Indeed,  since $\piAN_*(G_+\sm_{\mT}A )$ and $\piAN_*(\mN_+\sm_{\mT} A)$ are small, it suffices
 to deal with the case $\bbI =F_{\mT}(G_+, \elrT{K})$ for some $K$. For any finite
 $\mN$-spectrum $B$ we have $\Phi^KDB=D\Phi^KB$ and Corollary
 \ref{cor:piAcoind} gives the value $\piAN_*(\bbI)$. Abbreviating $\cA (N,toral)$ to $\cA (N)$,  we may calculate
$$\hspace{-12ex}
\begin{array}{rcl}
\Hom_{\cA (\mN)} (\piAN_*(B), \piAN_*(F_{\mT}(G_+, \elrT{K})) )
&\cong&\Hom_{\cA (\mN)} (\piAN_*(B), \theta_*\Psi
\piAN_*(F_{\mT}(\mN_+, \elrT{K})) )\\
&\cong&\Hom_{\cA (\mN)} (\piAN_*(B), f^{\mN}_K(\theta_*\Psi H_*(B\mT
/K^{L\mT/K})[(\mW G)_K])  )\\
&\cong&\Hom_{H^*(B\mT/K)} (H^*_{\mT/K}(D\Phi^KB), \theta_*\Psi H_*(B\mT
/K^{L\mT/K})[(\mW G)_K])  )^{(\mW G)_K}\\
&\cong&\Hom_{H^*(B\mT/K)} (H^*_{\mT/K}(D\Phi^KB), \theta_*H_*(B\mT
/K^{L\mT/K})[W_G^dK])  )^{(\mW G)_K}\\
&\cong&\Hom_{H^*(B\mT/K)} (H^*_{\mT/K}(D\Phi^KB), \theta_*H_*(B\mT
/K^{L\mT/K}))  )^{(\mW G)_K^e} \\
&\cong&\Hom_{H^*(B\mT/K)} (H^*_{\mT/K}(D\Phi^KB^{L\mT /K}), \theta_*H_*(B\mT
/K))  )^{\mW W_G^eK} \\
\end{array}$$
As an $H^*(BW_G^eK)$-module $H_*(B\mT /K)$ is a sum of copies of
$H_*(BW_G^eK)$, and hence 
as an $H^*(B\mT /K)$-module $\theta_* H_*(B\mT /K)$ is a sum of copies of
$H_*(B\mT /K)$. The above functor is thus a sum of copies of 

$$\begin{array}{rcl}
\Hom_{H^*(B\mT/K)} (H^*_{\mT/K}(D\Phi^KB^{L\mT /K}), H_*(B\mT
/K))  )^{\mW W_G^eK} &\cong&\left[ H_*^{\mT/K}(D\Phi^KB^{L\mT
    /K}))\right]^{\mW W_G^eK} \\
&\cong&H_*^{W_G^eK}(D\Phi^KB^{LW_G^eK})) \\
\end{array}$$
where the final  isomorphism is Lemma \ref{lem:suspLie}. 

It suffices to show that $\alpha$ induces an isomorphism of this
functor of $B$, or equivalently that the functor vanishes on 
$$Q(A)=\cofibre (\mN_+\sm_{\mT}A\stackrel{\alpha}\lra G_+\sm_{\mT}A).$$
 Now the following groups vanish together
$$H^{W_G^e K}_*(D\Phi^K Q(A)^{L W_G^eK}), H_{W_G^e K}^*(D\Phi^K Q(A)^{L W_G^e K)}),
H_{W_G^e K}^*(\Phi^K Q(A)). $$
The first two are vector space duals, and the last two vanish together by the
standard observation about ring spectra recalled in Subsection
\ref{subsec:Borel}. 
 The result follows from Corollary \ref{cor:fpNGamma}. 
\end{proof}


\section{Essential surjectivity}
\label{sec:essepi}
We want to show that the functors $\piA_*$ are essentially
surjective, so that our modelling categories are no bigger than
necessary. 

\begin{lemma}
Every object of $\cA (G,toral)$ is realizable by a toral $G$
spectrum. 
\end{lemma}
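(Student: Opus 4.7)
The plan is to realize $M$ as an iterated fibre built from a realized injective resolution. By Proposition~\ref{prop:AGtoralinj} the abelian category $\cA(G,toral)$ has injective dimension $r=\rank(G)$, and by Lemma~\ref{lem:realinj} there are enough realizable injectives. I will therefore choose a finite injective resolution
$$0\to M\to I^0\to I^1\to\cdots\to I^r\to 0$$
together with toral $G$-spectra $\bbI^j$ satisfying $\piAG_*(\bbI^j)=I^j$. Proposition~\ref{prop:mapstoinj} then gives an isomorphism $[\bbI^j,\bbI^{j+1}]^G\cong \Hom_{\cA(G,toral)}(I^j,I^{j+1})$, so each algebraic differential lifts to a map $d^j:\bbI^j\to\bbI^{j+1}$, and the composite $d^{j+1}\circ d^j$ is null because it maps to zero under the same isomorphism applied to $[\bbI^j,\bbI^{j+2}]^G$.

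I will then construct $X$ by descending iterated fibres. Set $Y_r:=\bbI^r$. Inductively, a chosen null homotopy of $d^{j+1}\circ d^j$ produces a lift of $d^j$ through the fibre inclusion $Y_{j+1}\to\bbI^{j+1}$, and I define $Y_j:=\fibre(\bbI^j\to Y_{j+1})$ and finally $X:=Y_0$. Because $\piAG_*=\Psi\circ\piAN_*\circ\res^G_{\mN}$ with $\Psi$ exact over $\Q$ and $\piAN_*=\piAT_*$ a standard homology theory, each cofibre sequence $Y_j\to\bbI^j\to Y_{j+1}$ yields a long exact sequence in $\cA(G,toral)$. Descending induction then identifies $\piAG_*(Y_j)=\ker(d^j:I^j\to I^{j+1})$: at each stage exactness of the injective resolution forces the image of $\bbI^j\to Y_{j+1}$ on $\piAG_*$ to equal $\im(d^j)=\ker(d^{j+1})=\piAG_*(Y_{j+1})$, and the resulting surjectivity collapses the long exact sequence to a short exact sequence pinning down the kernel. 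In particular $\piAG_*(X)=\ker(d^0)=M$, and $X$ is toral as an iterated fibre of toral $G$-spectra.

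The main obstacle will be the inductive existence of the lifts $\bbI^j\to Y_{j+1}$: each step requires a new null homotopy, and the obstruction to the $(j-1)$-th lift lives in a $\Hom$-group between suspensions of earlier $I^k$'s, which by Proposition~\ref{prop:mapstoinj} is detected algebraically and can be absorbed by adjusting the null homotopy chosen at the previous stage. This is the standard obstruction argument showing that a bounded complex of injectives totalises to its kernel at the left end; once the coherent choice is in place, the long exact sequence computation of the previous paragraph runs through without further trouble.
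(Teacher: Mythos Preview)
Your plan runs the construction in the opposite direction from the paper, and that reversal is where the real difficulty hides.

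The paper builds a dual Adams tower \emph{upwards}: starting with $Y^{0}=\ast$ and $Y^{1}=\bbI_{0}$, at each stage it knows $\piAG_{*}(Y^{s})$ explicitly as an extension of $M$ by a suspended syzygy $C$, uses injectivity of the next $I$ in the resolution to extend the inclusion $C\hookrightarrow I$ over $\piAG_{*}(Y^{s})$, and then invokes Proposition~\ref{prop:mapstoinj} to realize this as a map $Y^{s}\to\Sigma^{1-s}\bbI_{s}$, taking $Y^{s+1}$ to be the fibre. The decisive feature is that every map to be produced has a \emph{realized injective} as its target, so Proposition~\ref{prop:mapstoinj} supplies it outright and no coherence or obstruction theory is needed.

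In your direction the map required at stage $j$ is $\bbI^{j}\to Y_{j+1}$, and the obstruction is the class of $g_{j+1}\circ d^{j}$ in $[\bbI^{j},Y_{j+2}]^{G}$. But $Y_{j+2}$ is not one of the realized injectives, so Proposition~\ref{prop:mapstoinj} does \emph{not} identify this group with $\Hom(I^{j},\piAG_{*}Y_{j+2})$. Knowing the obstruction vanishes on $\piAG_{*}$ only places it in Adams filtration $\geq 1$; the higher contributions are governed by $\Ext^{s}_{\cA(G,toral)}(I^{j},\im d^{j+1})$ for $s\geq 1$, which have no reason to vanish since the $I^{j}$ are injective rather than projective. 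Your claim that the obstruction ``can be absorbed by adjusting the null homotopy chosen at the previous stage'' is exactly the point at issue: changing that homotopy alters the obstruction only by elements of the form $\iota_{*}(\alpha\circ d^{j})$ with $\alpha\in[\bbI^{j+1},\Sigma^{-1}Y_{j+3}]$, and you have given no argument that the actual obstruction lies in this subgroup. Already at $j=r-3$ this is a genuine Toda-bracket question. The cleanest repair is simply to reverse direction as the paper does, so that every new map lands in some $\bbI_{s}$ and Proposition~\ref{prop:mapstoinj} handles existence with nothing further to check.
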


\begin{proof} We may use the ingredients of the proof of the Adams
  spectral sequence. Suppose then that $M$ is a module in $\cA
  (G,toral)$. By Proposition \ref{prop:AGtoralinj}, this has an injective resolution 
$$0\lra M\lra I_0 \lra \cdots \lra I_r\lra 0. $$
We now set about constructing a toral $G$-spectrum $Y$ with
$\piAG_*Y=M$. When $Y$ is constructed, we will in retrospect see that
we have found the dual Adams tower $\{ Y^s\}$ where this is related to
the Adams tower by cofibre sequences $Y_s\lra Y\lra Y^s$. 

In any case, we construct a tower
$$\diagram
* \ar@{=}[r]& Y^0\dto &Y^1 \dto \lto &Y^2 \dto \lto&\cdots \lto  &Y^r \dto \lto &Y^{r+1}=Y \lto \\
&\Sigma^1\bbI_0&\bbI_1&\Sigma^{-1}\bbI_2&&\Sigma^{1-r}\bbI_r&\\
\enddiagram$$
For each $s$, the $G$-spectrum $\bbI_s$  is a realization of $I_s$, which exists 
 by Lemma \ref{lem:realinj}. We build the tower recursively, starting
 with $Y^0=*$ and $Y^1=\bbI_0$.  Supposing we have constructed the
 tower up to $Y^s$, we find an exact sequence
$$0\lra \Sigma^{1-s} C_{s+1}\lra \piAG_*Y_s \lra M\lra 0$$
in $\cA (G,toral)$,  where $C_{s+1}=\im (I_s\lra I_{s+1})$. Since $I_{s+1}$ is injective, we
may extend the map $C_{s+1}\lra I_{s+1}$ over $\piAG_*(Y_s)$ and then by
Proposition
\ref{prop:mapstoinj} we may realize this by a map $Y_s\lra
\Sigma^{1-s}\bbI_s$. We then take $Y^{s+1}$ to be the fibre,
completing the step. Since $C_{r+1}=0$, the process finishes in $r$
steps with $Y=Y^{r+1}$ having $\piAG_*(Y)=M$ as required. 
\end{proof}

\begin{remark}
For the special case $G=\mN$, one may work more directly from the case
of a torus. 
\end{remark}

\section{Change of groups}
\label{sec:hg}

We now suppose given a group $G$ and a subgroup $H$, and we choose
maximal tori $S$ of $G$ and $T$ of $H$ with  $S \supseteq T$. We note that it does not
follow that there is a containment of normalisers of maximal tori. 

Writing $i:H\lra G$ for the inclusion, the restriction map $i^*$ from
$G$-spectra to $H$-spectra has
left adjoint the induced spectrum $i_*Y=G_+\sm_HY$ and right adjoint
$i_!Y=F_H(G_+, Y)$ from $G$-spectra to
$H$-spectra.  Applying idempotents these  give functors on toral
spectra:
$$\adjointtriple{\mbox{$G$-spectra}}{i_*}{i^*}{i_!}{\mbox{$H$-spectra.}} $$

It is the purpose of this section to describe the algebraic
counterparts. If the ranks of the groups differ then there is only a
good story at the level of derived functors. The exposition will deal
with the general case, and simply note that if the ranks are equal
then the effect of using derived functors is nugatory.

The case of a torus is considerably simpler, and since
we will also reduce the general case to that of the torus, we will
deal with tori first in the next subsection. For the equal rank case
the content is vacuous, so readers interested only in equal rank can
skip Subsection \ref{subsec:tori}

\subsection{Tori}
\label{subsec:tori}
In this section we consider the case when $G=S$ and $H=T $ are
tori. We let $j: T\lra S$ denote the inclusion and $\lambda=j^*:
H^*(BS)\lra H^*(BT)$. To prove the assertion requires working with the
specific Quillen equivalences used in \cite{tnqcore}, so we will not
prove it here. On the other hand, special cases can be seen: free
spectra, and homologically simple objects. 

\begin{conj}
\label{conj:toralchange}
Given an inclusion $j: T\lra S$ of tori, the change of groups functors
$$\adjointtriple{\mbox{$S$-spectra}}{j_*}{j^*}{j_!}{\mbox{$T$-spectra}}$$
are modelled at the derived level by the functors
$$\adjointtriple{\mbox{\cA (S)}}{\lambda^!}{\lambda_*}{\lambda^*}{\mbox{\cA (T)}}.$$
For an object $M$ of $\cA (S)$, $\lambda_*M$ is defined on
subgroups $L\subseteq T$ by 
$$(\lambda_*M)(L)=H^*(BT/L)\tensor_{H^*(BS/L)} M(L), $$
where the tensor product is derived.  
For an object $N$ of $\cA (T)$, the objects $\lambda^*M$ and $\lambda^!M$ are
defined on subgroups $K\subseteq S$ by 
$$
(\lambda^*N)(K)=\dichotomy{N(K)& \mbox{ if $K\subseteq T$}}{0&\mbox{
   otherwise}}$$
and
$$
(\lambda^!N)(K)=\dichotomy{\Sigma^{LS/LT}N(K)& \mbox{ if $K\subseteq T$}}{0&\mbox{ otherwise}}$$
\end{conj}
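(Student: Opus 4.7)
The plan is to transport the topological adjoint triple $(j_*, j^*, j_!)$ across the Quillen equivalence $\mbox{$T$-spectra}\simeq_Q d\cA(T)$ of \cite{tnqcore} (and similarly for $S$), identify $j^*$ with $\lambda_*$ on the algebraic side, and deduce the remaining two identifications by uniqueness of derived adjoints. First I would check purely algebraically that $(\lambda_*, \lambda^*, \lambda^!)$ really is an adjoint triple on $\cA(S)$ and $\cA(T)$: the pair $\lambda_*\dashv\lambda^*$ is tensor--hom extension of scalars on each stalk $L\subseteq T$, with quasi-coherence forcing the identification on flags, while the pair $\lambda^*\dashv\lambda^!$ is a Wirthm\"uller-type adjunction with dualizing shift $\Sigma^{LS/LT}$ coming from the identity $\mathbb{D}(H^*(BS/L)|H^*(BT/L))\simeq \Sigma^{LS/LT}H^*(BT/L)$, exactly as in the calculation preceding Proposition \ref{prop:modelofresGN}.

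The topological content is then concentrated in showing $\lambda_*\circ \piA_*\simeq \piA_*\circ j^*$ at the derived level. Stalkwise, for $L\subseteq T$ and a finite $S$-spectrum $X$, one has $\Phi^L(j^*X)=\Phi^L X$, hence
$$\piA_*(j^*X)(L) \;=\; b^{T/L}_*(\Phi^L X) \;=\; H^*(BT/L)\otimes^L_{H^*(BS/L)} b^{S/L}_*(\Phi^L X),$$
the second equality being the Eilenberg--Moore identification of Lemma \ref{lem:class}(iv) applied to the fibration $S/T\to BT/L\to BS/L$ (which makes sense because rationally $H^*(BT/L)$ is free over $H^*(BS/L)$ when the ranks agree, and in general only after derivation). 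This realizes the $\lambda_*$--formula. The identification of $i_! = j_!$ with $\lambda^!$ then follows from $j_!Y = F_T(S_+,Y)\simeq \Sigma^{LS/LT}(S_+\wedge_T Y)$ via the Wirthm\"uller equivalence, together with the geometric-fixed-point observation that $\Phi^K j_!Y\simeq *$ unless $K$ is subconjugate to $T$, which matches the case distinction in $\lambda^!$. The middle functor $j_*$ is pinned down by being left adjoint to $j^*$ and thus must be modelled by $\lambda^!$'s left adjoint, namely $\lambda^!$ itself read backwards, i.e.\ the left adjoint of $\lambda_*$.

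Two classes of test cases should be checked directly as sanity checks and to bootstrap the argument. For \emph{free spectra} (those with geometric isotropy $\{1\}$), the modules are concentrated at $L=1$ where $H^*(BT)$ is free over $H^*(BS)$, so $\lambda_*$ and $\lambda^!$ are exact and the formulas reduce to the free rational equivariant spectra identification of \cite{gfreeq2}, giving classical induction-restriction-coinduction of modules (with Koszul shift by the rank difference for $\lambda^!$). For \emph{homologically simple} objects, those of the form $f_K^S(V)$ with $V$ an injective $H^*(BS/K)$-module, one can read off the images under each of the three functors from the adjoint characterization of $f_K$ plus the behaviour of Borel cohomology under restriction of tori.

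The hard part will be the global derived statement: although the algebraic functors are defined on the abelian categories, the conjecture is at the derived level, and $\lambda_*$ is genuinely inexact on modules that are not stalkwise free. One must therefore lift the comparison through the specific cofibrantly generated model structure of \cite{tnqcore} and verify that restriction $j^*$ preserves the relevant (co)fibrant resolutions, so that the algebraic extension of scalars computes the correct derived functor. This is where the proof genuinely requires the technology of \cite{tnqcore} rather than the structural arguments given in the rest of the present paper, which is why it remains stated here as a conjecture.
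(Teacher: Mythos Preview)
The paper does not prove this statement: it is explicitly labeled a \emph{Conjecture}, and the surrounding text says ``To prove the assertion requires working with the specific Quillen equivalences used in \cite{tnqcore}, so we will not prove it here. On the other hand, special cases can be seen: free spectra, and homologically simple objects.'' There is therefore no proof in the paper for your proposal to be compared against.

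Your outline is consistent with the paper's own remarks about what a proof would require. You correctly identify that the substantive step is lifting the stalkwise Eilenberg--Moore comparison to a derived statement compatible with the model structures of \cite{tnqcore}, and you flag exactly the two test classes (free spectra, homologically simple objects) that the paper singles out. Your closing sentence even acknowledges that this is why the statement is left as a conjecture, so you have read the situation accurately.

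One small point: in your first paragraph you write ``the pair $\lambda^*\dashv\lambda^!$'', but in the adjoint triple $\lambda^!\dashv\lambda_*\dashv\lambda^*$ it is $\lambda^!$ that is \emph{left} adjoint to $\lambda_*$ (modelling induction $j_*$), not an adjoint of $\lambda^*$. The Wirthm\"uller shift $\Sigma^{LS/LT}$ enters because $\lambda^!$ and $\lambda^*$ differ only by this twist, reflecting $j_*Y\simeq \Sigma^{-L(S/T)}j_!Y$. This is a notational slip rather than a structural error in your plan.
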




\subsection{General case}

We now return to the general case when $S$ is the maximal torus of
$G$, $T$ is the maximal torus of $H$. We write  $i:H\lra G$ and
$j:T\lra S$ for the inclusions with induced maps
$$\theta=i^*: H^*(BG)\lra H^*(BH)$$
and 
$$\lambda=j^*: H^*(BS)\lra H^*(BT). $$
We will state the proposition in the equal rank case (i.e., when
$S=T$), but we have stated it so that it will hold at the derived
level in general provided Conjecture \ref{conj:toralchange} holds.

\begin{prop}
\label{prop:generalchange}
If $G$ and $H$ have the same rank, then the change of groups functors
$$\adjointtriple{\mbox{$G$-spectra}}{i_*}{i^*}{i_!}{\mbox{$H$-spectra}}$$
are modelled by the functors
$$\adjointtriple{\mbox{\cA (G,toral)}}{\theta^!}{\theta_*}{\theta^*}{\mbox{\cA (H,toral)}}.$$
For an object $M$ of $\cA (G,toral)$, $\theta_*M$ is defined on
subgroups $L\subseteq T$ by 
$$(\theta_*M)(L)=\left[ H^*(B T/L)\tensor_{H^*(BW_G^e(L))} M(L)
\right]^{\mW W_H^e(L)}. $$
For an object $N$ of $\cA (H,toral)$, $\theta^*M$ and $\theta^!M$ are
defined on subgroups $K\subseteq S$ by 
$$
(\lambda^*N)(K)=\dichotomy{
\left[    H^*(BT/K)\tensor_{H^*(BW^e_H(K))}N(K)\right]^{\mW W_G^e(K)}
& \mbox{ if $K\subseteq T$}}{0&\mbox{
   otherwise}}$$
and
$$
(\lambda^!N)(K)=\dichotomy{
\left[   \Sigma^{LS/LT} H^*(BT/K)\tensor_{H^*(BW^e_H(K))}N(K)\right]^{\mW W_G^e(K)}
& \mbox{ if $K\subseteq T$}}{0&\mbox{ otherwise}}$$
\end{prop}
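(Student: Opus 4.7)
The plan is to establish the modelling of restriction ($i^*$) by $\theta_*$ via a direct pointwise computation, and then derive the formulas for $i_!$ and $i_*$ by adjunction and a Wirthm\"uller argument. For the restriction statement, fix a subgroup $L\subseteq T$. Proposition \ref{prop:piAG} gives
$$\pi^{\cA (H)}_*(i^*X)(L) = b^{W_H^e(L)}_*(\Phi^L X),$$
since $\Phi^L$ commutes with restriction of groups. Because $G$ and $H$ have the same rank we have $S=T$, so $T/L$ is the common maximal torus of $W_G^e(L)$ and $W_H^e(L)$. Combining Lemma \ref{lem:bGWG}, which expresses $b_*^\Gamma$ as $\mW\Gamma$-invariants of $b_*^{\mT\Gamma}$, with the Eilenberg--Moore identification of Lemma \ref{lem:class}(iv) (dualised on a system of finite subspectra and then extended by colimits) yields
$$b_*^{W_H^e(L)}(\Phi^L X) \iso \bigl[H^*(BT/L) \tensor_{H^*(BW_G^e(L))} b_*^{W_G^e(L)}(\Phi^L X)\bigr]^{\mW W_H^e(L)},$$
and the right-hand side is precisely $(\theta_*\pi^{\cA (G)}_*(X))(L)$. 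Naturality with respect to cotoral inclusions is inherited from the naturality of the Eilenberg--Moore isomorphism, and compatibility with the $\mW$-action is formal, so the pointwise identification upgrades to an isomorphism of diagrams in $\cA (H,toral)$.

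For the coinduction functor, one verifies directly that $\theta^*$ is right adjoint to $\theta_*$: this reduces to the standard interplay between extension of scalars along $H^*(BW_H^e(K)) \to H^*(BT/K)$ and passage to invariants under the quotient $\mW W_G^e(K)/\mW W_H^e(K)$, using the elementary identity $\Hom_{R^W}(M^W,N)\iso \Hom_R(M,N)^W$ in characteristic zero. Since $i_!$ is the right adjoint of $i^*$ topologically and the algebraic functors intertwine these on objects (by the first step) and on morphisms (by Proposition \ref{prop:mapstoinjproved}), uniqueness of right adjoints forces the model of $i_!$ to be $\theta^*$. For the induction functor, the Wirthm\"uller isomorphism
$$i_*Y \simeq i_!\bigl(\Sigma^{L(G/H)}Y\bigr)$$
reduces the problem to computing the effect of smashing with $S^{L(G/H)}$ on the algebraic model. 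The Lie-group argument used in the second proof of Lemma \ref{lem:suspLie} carries this out universally, and the surviving contribution at a subgroup $K\subseteq T$ reduces in the equal-rank setting to the global suspension $\Sigma^{LS/LT}$ inherited from the torus case, producing the stated formula for $\theta^!$.

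The main obstacle will be verifying that $\theta_*$, $\theta^*$, and $\theta^!$ each preserve the subcategories of qce, $\cF$-continuous modules. This is analogous to Proposition \ref{prop:theta} but more delicate because $W_H^e(K)$ may be a proper subgroup of $W_G^e(K)$, so extension of scalars from $H^*(BW_G^e(K))$ to $H^*(BT/K)$ interacts nontrivially with passage to $\mW W_H^e(K)$-invariants; the normality and freeness arguments of Section \ref{sec:GspNsp} must be iterated in this mixed setting, and the Warning \ref{warn:invnotloc} phenomenon reappears when one tries to identify the two tensor products that arise in a flag. A secondary subtlety, which is why the torus case is only a conjecture in Conjecture \ref{conj:toralchange}, is that outside the equal-rank setting the tensor products must be derived, so a fully general proof has to pass through the Quillen equivalences of \cite{tnqcore} rather than stay at the level of abelian categories.
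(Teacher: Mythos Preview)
Your direct computation for restriction is correct, but it recapitulates the descent machinery rather than invoking it. The paper's proof is structurally different: it \emph{defines} the three algebraic functors as composites through the torus level,
\[
\theta_* = \Psi^H \lambda_* \phi_*^G, \qquad \theta^* = \Psi^G \lambda^* \phi_*^H, \qquad \theta^! = \Psi^G \lambda^! \phi_*^H,
\]
where $\phi_*^G : \cA(G,toral) \to \cA(S)[\mW G]$ and $\Psi^G$ are the descent functors of Section~\ref{sec:GspNsp}, and $\lambda_*, \lambda^*, \lambda^!$ are the torus-level functors between $\cA(S)[\mW G]$ and $\cA(T)[\mW H]$ (which in the equal-rank case $S=T$ reduce to change of Weyl-group action). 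The explicit formulae then fall out immediately by unwinding $\phi_*$ (tensor up to $H^*(BT/L)$) and $\Psi$ (take $(\mW)^e$-invariants). The modelling of $i^*$ follows from transitivity of restriction together with Proposition~\ref{prop:thetaPsionG}; the modelling of $i_!$ reduces in the same way to Proposition~\ref{prop:modelofcoindNG} and the torus case.

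This factorisation buys two things you had to fight for. First, your anticipated ``main obstacle'' dissolves: each of $\Psi$ and $\phi_*$ is already shown to preserve the qce, $\cF$-continuous subcategories (Propositions~\ref{prop:theta} and~\ref{prop:Psi}), so the composites do too, and no fresh normality argument is needed. Second, your uniqueness-of-adjoints argument for $i_!$ has a genuine gap: knowing that $\piAH_* \circ i^* \cong \theta_* \circ \piAG_*$ does not force $\piAG_* \circ i_! \cong \theta^* \circ \piAH_*$ unless $\piAG_*$ and $\piAH_*$ are equivalences of categories, and the paper establishes only an Adams spectral sequence, not an equivalence. The paper's route avoids this entirely, since Proposition~\ref{prop:modelofcoindNG} verifies directly (via Corollary~\ref{cor:fpNGamma}) that $\Psi$ models coinduction, so no appeal to uniqueness of adjoints is required.
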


\begin{remark}
It is worth making explicit a couple of  special cases. First note
that if  $H=\mN G$ we recover part of Proposition
\ref{prop:modelofresGN}, and similarly, if $H=\mT G$. 

The statement should hold at the derived level even when $G$ and $H$
are of different rank. If so, when $G$ and $H$ are both tori we recover Conjecture \ref{conj:toralchange}.
\end{remark}

\begin{proof}
In view of toral detection and the fact that by Proposition
\ref{prop:Psitheta}
 $\Psi \theta_* =1$, we
can deduce the general case from the torus case. In other words,
writing $V=\mW G$ and $W=\mW H$, and notation given  in the diagram
$$\diagram
\cA (G,toral) \ar[rr]|-{\theta_*} \dto<-1ex>_{\phi_*^G} &&
\cA (H,toral) \dto<-1ex>_{\phi_*^H}
\llto<1.2ex>^{\theta^!}
\llto<-1.2ex>_{\theta^*} \\
\cA (S)[V] \ar[rr]|-{\lambda_*} \uto<-1ex>_{\Psi^G}&&
\cA (T)[W] \uto<-1ex>_{\Psi^H}
\llto<1.2ex>^{\lambda^!}
\llto<-1.2ex>_{\lambda^*} 
\enddiagram$$
we have $\theta^*=\Psi^G \lambda^*\phi^H_*$, 
$\theta_*=\Psi^H \lambda_*\phi^G_*$ and $\theta^!=\Psi^G \lambda^!\phi^H_*$. The formulae are now easily verified.
\end{proof}

\end{document}